\title{Wall and Chamber Structure for finite-dimensional Algebras}
\date{}
\author{Thomas Br\"ustle, David Smith and Hipolito Treffinger}
\theoremstyle{plain} 
\newtheorem{theorem}{Theorem}[section]
\newtheorem{prop}[theorem]{Proposition}
\newtheorem{lem}[theorem]{Lemma}
\newtheorem{cor}[theorem]{Corollary}
\theoremstyle{remark}
\newtheorem{rmk}[theorem]{Remark}
\newtheorem{ex}[theorem]{Example}
\theoremstyle{definition}
\newtheorem{defi}[theorem]{Definition}
\newtheorem{conj}[theorem]{Conjecture}
\newcommand{\rep}[1]{%
  {%
    \tiny%
    \begin{matrix}%
      #1%
    \end{matrix}%
  }%
}
\newcommand{\gvec}[1]{%
  {%
    \tiny%
    \left(
    \begin{matrix}%
      #1%
    \end{matrix}%
    \right)
  }%
}
\def\Hom{\mbox{Hom}}
\def\coker{\mbox{coker}}
\def\im{\mbox{im}}
\def\top{\mbox{top}}
\def\Im{\mbox{\rm Im}}
\def\Fac{\mbox{Fac}\,}
\def\add{\mbox{\rm add}\,}
\def\mod{\mbox{\rm mod}\,}
\newcommand{\F}{\mathcal{F}}
\newcommand{\T}{\mathcal{T}}
\newcommand{\ra}{\rightarrow}
\newcommand{\D}{\mathfrak{D}}
\newcommand{\Ch}{\mathfrak{C}}
\newcommand{\Co}{\mathcal{C}}
\newcommand{\0}{\{0\}}
\newcommand{\C}{\mathcal{C}}
\newcommand{\N}{\mathbb{N}}
\newcommand{\R}{\mathbb{R}}
\newcommand{\rC}{\mathcal{C}\strut^{\mathrm{o}}}
\begin{document}

\maketitle

\abstract{We use $\tau$-tilting theory to give a description of the wall and chamber structure of a finite dimensional algebra.
We also study $\D$-generic paths in the wall and chamber structure of an algebra $A$ and show that every maximal green sequence in $\mod A$ is induced by a $\D$-generic path.}

\section{Introduction}

Cluster algebras were introduced by Fomin and Zelevinski in \cite{FZ1}, prompting a lot of subsequent work on the subject. 
In particular, there are several representation theoretic categorifications of cluster algebras, see for instance \cite{CCS-ctiltedAn,BMRRT, GLS}.
More recently, Adachi, Iyama and Reiten introduced in \cite{AIR} $\tau$-tilting theory, an extension of the classical tilting theory that is compatible with the concept of mutation coming from cluster algebras. 
In doing so, $\tau$-tilting theory becomes a new categorification of cluster algebras with the novelty that its process of mutations can be applied to any finite-dimensional algebra, not only cluster-tilting algebras.
Therefore a number of concepts arising from cluster algebras can now be studied for any algebra from the $\tau$-tilting perspective.
\medskip

In \cite{FZ4} Fomin and Zelevinski introduced $g$-vectors, a set of vectors with integer entries that parametrize the cluster variables of a given cluster algebra. 
The first representation theoretic interpretation of these vectors was given by Dehy and Keller in \cite{DK}, and Adachi, Iyama and Reiten adapt them to $\tau$-tilting theory 
in \cite{AIR}.
Even if the name $g$-vector was new, these same vectors have been already considered in the representation theory of algebras before, in fact Auslander and Reiten studied them already in 1985 in \cite{AR1}.
Given an algebra $A$, the set of $g$-vectors in $\mod A$ enjoys many combinatorial properties.
For instance, it was proven in \cite{DIJ} that the $g$-vectors in $\mod A$ form a well behaved simplicial complex in $\mathbb{R}^n$, where $n$ is the number of non-isomorphic simple $A$-modules. 
Further properties of $g$-vectors have also been studied in different contexts, see for instance \cite{Rea,HPS}. 
\medskip

On the other hand, stability conditions were introduced in representation theory of quivers in seminal papers by Schofield \cite{Scho} and King \cite{Ki}.
Since then, the study of rings of quiver semi-invariants by Derksen and Weyman \cite{DW} has been expanded to the context of cluster algebras. 
The work of Igusa, Orr, Todorov and Weyman \cite{IOTW} shows that walls in the semi-invariant picture correspond to the $c$-vectors in cluster theory.
These vectors are also studied in quantum field theory, where they are interpreted as charges of BPS particles. 
It turns out that maximal green sequences, or more generally, maximal paths in the semi-invariant picture which are oriented in positive direction \cite{BHIT}, give rise to a complete sequence of charges, called spectrum of a BPS particle, see \cite[section 2]{BDP}. 
This phenomenon has already been observed by Seiberg and Witten in their study of $N=2$ SUSY with pure gauge group $SU(2)$ in \cite{SW}, which yields the wall and chamber structure of the Kronecker quiver. The moduli space of quantum field theories in this case has two fundamentally different regions, corresponding to the two possible maximal green paths in the semi-invariant picture.
\medskip

The semi-invariant picture of quiver representations has re-appeared in mathematical physics and mirror symmetry as scattering diagrams such as in Kontsevich and Soibelman's study of wall crossing in the context of Donaldson-Thomas invariants in integrable systems and mirror symmetry \cite{KS}.
Later, Gross, Hacking, Keel and Kontsevich studied in \cite{GHKK} the so-called cluster scattering diagram, proving several conjectures on cluster algebras. 
However, the cluster scattering diagram is an intrinsically geometric object. 
Therefore, taking an algebraic approach to the problem, Bridgeland introduces in \cite{B16} the algebraic scattering diagram and shows that both scattering diagrams are isomorphic if the algebra considered is hereditary.

In order to construct the scattering diagram of an algebra $A$, Bridgeland uses the partition of the real space $\mathbb{R}^n$ induced by the stability conditions over $\mod A$ introduced by King in \cite{Ki}. 
This partition of $\mathbb{R}^n$ is called the \textit{wall and chamber structure} of $A$.

The aim of this paper is therefore to join the concept of scattering diagrams and their wall and chamber structure as described in \cite{B16} with the combinatorial structure of the polyhedral fan associated with $\tau$-tilting modules as given in \cite{DIJ}, as well as to investigate maximal green sequences, and their continuous counterparts in the stability space.
\bigskip

\subsection{Content}

We recall  the notion of stability studied by King \cite{Ki}:
Let $A$ be an algebra whose Grothendieck group has rank $n$.
Then for any vector $\theta \in \R^n$, 
a non-zero module $M$ is called $\theta$-semistable  if its dimension vector $[M]$ is orthogonal to $\theta$, and $\langle\theta , [L] \rangle \le 0$  for every submodule $L$ of $M$. 
The stability space of an $A$-module $M$ is then defined as $$\D(M)=\{\theta\in \R^n : M \text{ is $\theta$-semistable}\}.$$

The stability space $\D(M)$ of $M$ is contained in the hyperplane orthogonal to $\theta$, but it could have smaller dimension. We say that $\D(M)$ is a \textit{wall} when $\D(M)$ has codimension one. 
Outside the walls, there are only vectors $\theta$ having no non-zero $\theta$-semistable modules.
Removing the closure of all walls we obtain a set
$$\mathfrak{R}=\R^n\setminus\overline{\bigcup\limits_{\substack{ M\in \mod A}}\D(M)}$$
whose  connected components $\mathfrak{C}$  are called  chambers. 
As connected components of an open set in $\R^n$, the chambers have dimension $n$.
This decomposition of $\R^n$ is called the wall and chamber structure of the algebra $A$ on $\R^n$.

The first aim of the paper is to study the category of $\theta$-semistable modules. 
It is known (see proposition \ref{wide}) that for fixed $\theta$ the category of $\theta$-semistable modules forms a wide subcategory of $\mod A$.
Using $\tau$-tilting theory, we are able to give a more precise statement (see Theorem \ref{stablemodcat}): 

\begin{theorem}
Let $(M,P)$ be a $\tau$-rigid pair and let $\alpha$ be a vector in the interior of the cone of $g$-vectors defined by $(M,P)$. 
Then the category of $\alpha$-semistable modules is equivalent to the module category of an algebra $C_{(M,P)}$. 
Moreover there are exactly $n-|M|-|P|$ nonisomorphic $\alpha$-stable modules, corresponding to the simple  $C_{(M,P)}$-modules.
\end{theorem}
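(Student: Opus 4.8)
The plan is to exhibit $\mathcal{W}_\alpha$, the category of $\alpha$-semistable modules, as a wide subcategory with exactly $n-|M|-|P|$ simple objects, and then to identify it with the module category produced by $\tau$-tilting reduction. First, by Proposition \ref{wide}, $\mathcal{W}_\alpha$ is a wide subcategory of $\mod A$; being a full extension-, kernel- and cokernel-closed subcategory of a length category, it is itself an abelian length category with finite-dimensional morphism and extension spaces. An $\alpha$-semistable module is $\alpha$-stable precisely when it admits no proper nonzero $\alpha$-semistable submodule, i.e. precisely when it is a simple object of $\mathcal{W}_\alpha$. Thus the $\alpha$-stable modules are exactly the simple objects of $\mathcal{W}_\alpha$, and the statement reduces to producing a finite-dimensional algebra $C_{(M,P)}$ together with an equivalence $\mathcal{W}_\alpha \simeq \mod C_{(M,P)}$ and to checking that $C_{(M,P)}$ has exactly $n-|M|-|P|$ simple modules.

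Next I would identify $\mathcal{W}_\alpha$ explicitly. Write $M=\bigoplus_i M_i$ and $P=\bigoplus_j P_j$ into indecomposables, so that $\alpha=\sum_i a_i\,g(M_i)-\sum_j b_j\,[P_j]$ with all $a_i,b_j>0$, since $\alpha$ lies in the \emph{interior} of the cone. The engine is the standard homological reading of the pairing, $\langle g(M_i),[N]\rangle=\dim\Hom(M_i,N)-\dim\Hom(N,\tau M_i)$ and $\langle [P_j],[N]\rangle=\dim\Hom(P_j,N)$, obtained by applying $\Hom(-,N)$ to a minimal projective presentation and using the Auslander--Reiten formula. Hence $\langle\alpha,[N]\rangle=\sum_i a_i(\dim\Hom(M_i,N)-\dim\Hom(N,\tau M_i))-\sum_j b_j\dim\Hom(P_j,N)$. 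I claim $\mathcal{W}_\alpha={}^{\perp}(\tau M)\cap M^{\perp}\cap P^{\perp}$, the intersection of the torsion class ${}^{\perp}(\tau M)$ with the torsion-free class $M^{\perp}\cap P^{\perp}$. The inclusion $\supseteq$ is easy: if $N$ lies in all three perpendiculars then every summand above vanishes, so $\langle\alpha,[N]\rangle=0$, and any submodule $L\subseteq N$ also lies in $M^{\perp}\cap P^{\perp}$ (torsion-free classes are closed under submodules), whence $\langle\alpha,[L]\rangle=-\sum_i a_i\dim\Hom(L,\tau M_i)\le 0$, so $N$ is $\alpha$-semistable. A useful sanity check is the chamber case $|M|+|P|=n$: there $(M,P)$ is support $\tau$-tilting, the perpendicular category is zero, and accordingly there are no nonzero $\alpha$-semistable modules.

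The reverse inclusion is where the real work lies, and it is the step I expect to be the main obstacle. Given an $\alpha$-semistable $N$, I must deduce the three $\Hom$-vanishings from $\langle\alpha,[N]\rangle=0$ together with $\langle\alpha,[L]\rangle\le 0$ for all submodules $L$. The idea is to feed the images and kernels of the maps $M_i\to N$, $P_j\to N$ and $N\to\tau M_i$ into the semistability inequalities, and then to use the strict positivity of the coefficients $a_i,b_j$, together with the defining properties that $M$ is $\tau$-rigid and $\Hom(P,M)=0$, to force each nonnegative contribution to $\langle\alpha,[N]\rangle$ to vanish separately. It is precisely the interior (rather than boundary) hypothesis on $\alpha$ that makes this term-by-term separation possible.

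Finally, with $\mathcal{W}_\alpha$ identified as the $\tau$-perpendicular category ${}^{\perp}(\tau M)\cap M^{\perp}\cap P^{\perp}$, I would invoke Jasso's $\tau$-tilting reduction: for a $\tau$-rigid pair $(M,P)$ this perpendicular category is equivalent to $\mod C_{(M,P)}$ for a finite-dimensional algebra $C_{(M,P)}$, and the reduction lowers the rank of the Grothendieck group by the number of indecomposable summands of the pair, so $C_{(M,P)}$ has exactly $n-|M|-|P|$ simple modules. Combining this with the first paragraph, the $\alpha$-stable modules are the simple objects of $\mathcal{W}_\alpha$, which under the equivalence correspond bijectively to the simple $C_{(M,P)}$-modules; hence there are exactly $n-|M|-|P|$ of them, as claimed.
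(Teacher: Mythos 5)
Your overall architecture matches the paper's: identify the $\alpha$-semistable modules with the $\tau$-perpendicular category $M^{\perp}\cap{}^{\perp}(\tau M)\cap P^{\perp}$ (this is Proposition \ref{catsemistables} in the paper), then apply Jasso's $\tau$-tilting reduction (Theorem \ref{tautiltred}) and count simples via the rank of the Grothendieck group. The inclusion $\supseteq$ you prove completely, and correctly. But the reverse inclusion --- which you yourself flag as ``the main obstacle'' --- is left as a plan rather than a proof, and the plan as stated is not yet the right move: testing semistability against the images of the individual maps $M_i\to N$ and the kernels of maps $N\to\tau M_i$ does not by itself separate the terms, because for an arbitrary submodule $L$ of $N$ the negative contributions $-\sum_i a_i\hom(L,\tau M_i)$ and $-\sum_j b_j\hom(P_j,L)$ can mask a positive contribution $\sum_i a_i\hom(M_i,L)$.

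The missing idea is to test against one specific submodule: the trace $tN$ of $N$ in the torsion class $\Fac M$, i.e.\ the image of the minimal right $\add M$-approximation of $N$ (Lemma \ref{approx}). For this particular submodule the $\tau$-rigidity of $(M,P)$ kills \emph{both} negative families of terms: any map $tN\to\tau M$ composed with an epimorphism $M^{l}\twoheadrightarrow tN$ would give a nonzero element of $\Hom_A(M,\tau M)$, and any map $P\to tN$ lifts through that epimorphism and hence vanishes since $\Hom_A(P,M)=0$. Therefore $\langle\alpha,[tN]\rangle=\sum_i a_i\hom(M_i,tN)\ge 0$, and the inequality is strict whenever $tN\neq 0$ because $tN\in\Fac M$ forces some $\hom(M_i,tN)\neq 0$ and all $a_i>0$ (this is Lemma \ref{lemtrace}). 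Semistability gives $\langle\alpha,[tN]\rangle\le 0$, so $tN=0$ and $N\in M^{\perp}$; then $\langle\alpha,[N]\rangle=0$ together with $\hom(M,N)=0$ and the strict positivity of all coefficients forces $\hom(N,\tau M)=\hom(P,N)=0$ term by term. With that lemma supplied, the rest of your argument goes through as written. (One small notational slip: you pair against $[P_j]$ where you should pair against $g^{P_j}$; the identity is $\langle g^{P_j},[N]\rangle=\hom(P_j,N)$, which is Theorem \ref{formula} with $\tau P_j=0$.)
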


Note that this was already shown using similar techniques for the $\mathbb{A}_n$ case in \cite{ITW}.

We further show in section 3 how the $\tau$-tilting fan introduced by Demonet, Iyama and Jasso in \cite{DIJ} can be embedded into King's stability manifold:
Each $\tau$-tilting pair $(M,P)$ yields a chamber $\Ch_{(M,P)}$, and one can give a complete description of the walls adjacent to the chamber $\Ch_{(M,P)}$:

\begin{theorem}\label{introwalls}
Let $A$ be a finite-dimensional algebra over an algebraically closed field. 
Then there is an injective function $\Ch$ mapping the $\tau$-tilting pair $(M,P)$ onto a chamber $\mathfrak{C}_{(M,P)}$ of the wall and chamber structure of $A$. 
Furthermore, if $A$ is $\tau$-tilting finite then $\Ch$ is also surjective.
\end{theorem}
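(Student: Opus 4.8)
The plan is to realise the chamber $\Ch_{(M,P)}$ as the interior of the cone of $g$-vectors $C_{(M,P)}$ attached to $(M,P)$, and to read off every required property from Theorem \ref{stablemodcat} together with the fan structure of the $\tau$-tilting cones established in \cite{DIJ}. First I would set up the map. Since $(M,P)$ is a $\tau$-tilting pair we have $|M|+|P|=n$, so the $g$-vectors of the indecomposable summands are linearly independent and the cone $C_{(M,P)}$ they span is full-dimensional. Applying Theorem \ref{stablemodcat} to a vector $\alpha$ in the interior of $C_{(M,P)}$, the number of $\alpha$-stable modules is $n-|M|-|P|=0$, so the category of $\alpha$-semistable modules is zero; in particular $\alpha$ lies on no wall. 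Hence $\mathrm{int}\,C_{(M,P)}\subseteq\mathfrak{R}$, and being connected it is contained in a single connected component of $\mathfrak{R}$. I would \emph{define} $\Ch_{(M,P)}$ to be this chamber.

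The key step is to show that this chamber is in fact equal to $\mathrm{int}\,C_{(M,P)}$. Each facet of $C_{(M,P)}$ is obtained by deleting one indecomposable summand, and is exactly the cone of $g$-vectors of the resulting almost complete $\tau$-rigid pair $(M',P')$ with $|M'|+|P'|=n-1$. By Theorem \ref{stablemodcat} the relative interior of this facet carries exactly $n-(n-1)=1$ stable module $N$, so the facet is contained in the wall $\D(N)$. Taking closures, the whole boundary $\partial C_{(M,P)}$ lies in $\overline{\bigcup_{M}\D(M)}$ and is therefore disjoint from $\mathfrak{R}$. Consequently any connected subset of $\mathfrak{R}$ meeting $\mathrm{int}\,C_{(M,P)}$ cannot cross $\partial C_{(M,P)}$, which forces $\Ch_{(M,P)}=\mathrm{int}\,C_{(M,P)}$.

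Injectivity is then immediate: by \cite{DIJ} the cones $C_{(M,P)}$ over all support $\tau$-tilting pairs form a fan, so distinct pairs have cones with disjoint interiors; since $\Ch_{(M,P)}=\mathrm{int}\,C_{(M,P)}$, distinct pairs yield distinct chambers. For surjectivity when $A$ is $\tau$-tilting finite, I would invoke the result of \cite{DIJ} that this fan is then complete, i.e. $\bigcup_{(M,P)}C_{(M,P)}=\R^n$. Given any chamber $\mathfrak{C}$, a point $x\in\mathfrak{C}\subseteq\mathfrak{R}$ lies in some cone $C_{(M,P)}$ by completeness; as $x$ avoids every wall while $\partial C_{(M,P)}$ consists of walls, $x$ must lie in the interior, whence $\mathfrak{C}=\Ch_{(M,P)}$.

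I expect the main obstacle to be the second paragraph: identifying each facet of the $g$-vector cone with the cone of an almost complete $\tau$-rigid pair and controlling its relative interior through Theorem \ref{stablemodcat}, together with the topological argument that the chamber cannot escape the cone. Making the facet/almost-complete-pair correspondence precise, and verifying that the relative interior of the facet is genuinely where the single stable module occurs, is the crux; once the boundary is known to consist of walls, both injectivity and surjectivity follow formally from the fan-theoretic input of \cite{DIJ}.
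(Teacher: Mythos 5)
Your proof is correct. The first two paragraphs reproduce, in substance, the paper's Proposition \ref{chambers}: interior points of $\Co_{(M,P)}$ carry no nonzero semistable module by Theorem \ref{stablemodcat}, while every nonzero boundary point lies in the relative interior of the cone of a smaller $\tau$-rigid pair and hence carries a stable module; your variant, which covers the whole boundary by the closures of the facets, works equally well because $\mathfrak{R}$ is defined as the complement of a \emph{closure}, so no separate treatment of the lower-dimensional faces is needed. Where you genuinely diverge from the paper is in the injectivity step. You deduce it from the fan structure of the $g$-vector cones established in \cite{DIJ} (distinct maximal cones have disjoint interiors, and the pair is recovered from its cone), whereas the paper attaches to each chamber $\Ch$ the torsion class $\T_{\Ch}=\bigcap_{\theta\in\Ch}\T_{\theta}$, proves $\T_{\Ch_{(M,P)}}=\Fac M$ (Proposition \ref{eqtorsionch}), and then invokes the bijection of \cite{AIR} between $\tau$-tilting pairs and functorially finite torsion classes. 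Your route is shorter but imports the full fan theorem of \cite{DIJ} as a black box; the paper's route is more self-contained relative to its own results and yields the additional information that the torsion class of the chamber is $\Fac M$, which is then reused in Section 4. For surjectivity both arguments rest on essentially the same input from \cite{DIJ} (in the $\tau$-tilting finite case the cones cover $\R^n$), so there is no substantive difference there.
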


We also define in section 3 a function $\T$ which assigns to each chamber $\Ch$ a torsion class $\T_{\Ch}$, and  we show that $\T_{\Ch_{(M,P)}}=\Fac M$.
\bigskip

Following \cite{B16},  we study in section 4 the $\D$-generic paths in the wall and chamber structure of an algebra $A$. These are  smooth paths crossing one wall at a time and such that the crossing is transversal, see definition \ref{defDgeneric}.
Moreover, given a $\D$-generic path $\gamma:[0,1] \to \mathbb{R}^n$ we associate to $\gamma(t)$ a torsion class for every $t\in[0,1]$.
This construction allows to show the following result.
 
 \begin{theorem}
Let $A$ be an algebra. 
Then every maximal green sequence is induced by a $\D$-generic path in the wall and chamber structure of $A$.

\end{theorem}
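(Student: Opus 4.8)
The plan is to turn the combinatorial data of a maximal green sequence into an explicit geometric path and then to verify, using the dictionary established in the previous sections, that the torsion classes it traverses are exactly those of the sequence. Recall that a maximal green sequence is a finite maximal chain of green mutations of support $\tau$-tilting pairs
$$(A,0)=(M_0,P_0),\ (M_1,P_1),\ \dots,\ (M_k,P_k)=(0,A),$$
each step changing exactly one summand. Applying the injective function $\Ch$ from Theorem \ref{introwalls} produces a finite sequence of chambers $\Ch_0,\dots,\Ch_k$ with $\Ch_i=\Ch_{(M_i,P_i)}$, and by the computation $\T_{\Ch_{(M,P)}}=\Fac M$ the associated torsion classes are $\T_{\Ch_i}=\Fac M_i$, which is precisely the $i$-th term of the chain of torsion classes underlying the sequence. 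Since each mutation changes a single summand, the cones of $g$-vectors of $(M_i,P_i)$ and $(M_{i+1},P_{i+1})$ meet along a common codimension-one face lying on a single wall $W_i=\D(B_i)$, where $B_i$ is the brick labelling the mutation, realised as the unique stable module on that wall by Theorem \ref{stablemodcat}.

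First I would build the path as a concatenation. Choose for each $i$ a point $p_i$ in the interior of the chamber $\Ch_i$, with $p_0$ in the interior of the cone of $(A,0)$ and $p_k$ in the interior of the cone of $(0,A)$, and join $p_i$ to $p_{i+1}$ by a smooth segment $\gamma_i$ that stays inside $\overline{\Ch_i}\cup\overline{\Ch_{i+1}}$ and meets their common wall $W_i$ in a single interior point, transversally. Concatenating the $\gamma_i$ and rounding the corners at the $p_i$ yields a smooth path $\gamma\colon[0,1]\to\R^n$ from $p_0$ to $p_k$. To guarantee $\D$-genericity I would invoke a general-position argument: the finitely many walls bordering the chambers $\Ch_0,\dots,\Ch_k$ meet one another only along sets of codimension at least two, so after an arbitrarily small perturbation $\gamma$ avoids all such intersections, crosses each $W_i$ transversally, and crosses exactly one wall at a time. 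Because the sequence is finite, only finitely many chambers and walls are involved, so no accumulation issues arise even when $A$ is $\tau$-tilting infinite.

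It then remains to check that $\gamma$ \emph{induces} the given sequence, i.e. that the torsion class assigned to $\gamma(t)$ in section 4 is constant equal to $\T_{\Ch_i}=\Fac M_i$ on the subinterval where $\gamma$ sits in $\Ch_i$ and jumps across $W_i$ to $\Fac M_{i+1}$; this is exactly the wall-crossing rule for the torsion-class function $\T$ combined with the identification $\T_{\Ch_i}=\Fac M_i$, so the torsion classes read off $\gamma$ coincide term by term with the green sequence. The main obstacle I anticipate lies not in the geometry of the individual segments but in the $\tau$-tilting-theoretic bookkeeping needed to confirm that consecutive mutation chambers genuinely share a codimension-one wall carrying the mutation brick as its unique stable object, and that one can pass between their interiors without transiting a third chamber or a higher-codimension wall intersection; controlling these two points is where the careful general-position analysis is concentrated.
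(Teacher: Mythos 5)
Your overall strategy --- convert the maximal green sequence into a chain of mutations of $\tau$-tilting pairs, string the corresponding chambers together by a smooth path crossing the shared facets, and read off the torsion classes --- is exactly the paper's (Proposition \ref{SVMmod}, Lemma \ref{lemDgeneric}, Theorems \ref{Dgenericpath} and \ref{MGSasDpaths}). But two steps as you state them do not go through. First, condition (2) of Definition \ref{defDgeneric} cannot be obtained by a general-position perturbation: at the crossing instant the path necessarily lies \emph{on} the wall, and that single point belongs to $\D(N)$ for infinitely many modules $N$ (every nonzero $\alpha(M,P)$-semistable module there), so ``the walls meet only in codimension at least two'' is false at the crossing point and no small perturbation can help --- you cannot perturb off the wall you are required to cross. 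What actually saves the argument is Proposition \ref{catsemistables} together with Theorem \ref{stablemodcat}: at an interior point of the facet $\Co_{(M,P)}$ of the almost $\tau$-tilting pair, the semistable category is equivalent to $\mod C_{(M,P)}$ with Grothendieck group of rank one, so all these dimension vectors $[N]$ are proportional. You cite the unique stable module but then lean on the perturbation instead; perturbation is only needed (and in the paper only used) to smooth the corners inside the open chambers, where no wall is present. Relatedly, greenness of each crossing is not automatic from the mutation being green: one needs $\langle\theta,[N]\rangle<0$ on $\Ch_i$ and $\langle\theta,[N]\rangle>0$ on $\Ch_{i+1}$ (Lemma \ref{lemtrace} and its dual, since $N\in M_i^\perp$ and $N\in{}^\perp(\tau M_i)\cap P_i^\perp$), which combined with transversality forces the sign of $\langle\gamma'(t),[N]\rangle$.

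Second, you take for granted that a maximal green sequence \emph{is} a maximal chain of mutations of $\tau$-tilting pairs; in the paper it is defined as a maximal finite chain of torsion classes, and the facts that each $\T_i$ is functorially finite, hence equal to $\Fac M_i$ for a $\tau$-tilting pair $(M_i,P_i)$, and that consecutive pairs are mutations of one another, constitute Proposition \ref{SVMmod} and rest on \cite[Theorem 3.1]{DIJ}. (Your endpoints are also reversed: $\T_0=\0=\Fac 0$ corresponds to $(0,A)$ and $\T_r=\mod A$ to $(A,0)$, so a green path runs from $-g^A$ to $g^A$.) Finally, identifying $\T_{\gamma(t)}$ at the crossing instants $t=t_i$ themselves, not just on the open subintervals, requires the sandwich $\Fac M_{i-1}\subsetneq\T_{\gamma(t_i)}\subseteq\Fac M_i$ together with maximality of the sequence. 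All of this is repairable, but it is the actual content of the proof rather than bookkeeping.
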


We finish section 4 with theorem \ref{Markoff} which provides a class of algebras not admitting a maximal green sequence.
These algebras are related to the cluster algebra of the one-punctured torus, and  have been object of intense studies in the context of cluster algebras, see for instance \cite[Example 35]{L-F-surfaces&potentials}, \cite{N-C-c&gvect} or \cite[Theorem 5.17]{DIJ}.
\bigskip

We refer to the textbooks \cite{ARS,AsSS,bookRalf} for background material. 

\subsection{Acknowledgements}

This paper is a revised version of one part of a preprint \cite{BST}. The authors would like to thank Kiyoshi Igusa, Patrick Le Meur, as well as an anonymous referee for for their input and suggestions.

When we were working on this article, we learned that Yurikusa is using the $g$-vectors of $2$-term silting complexes of $D^b(A)$ to describe a bijection between left wide subcategories of $\mod A$ and the left finite semistable subcategories of $\mod A$, induced by a linear map $\theta$ in a similar way as we do in this article, see \cite{Yur}.
\bigskip

The first and the second author were supported by Bishop's University, Université de Sherbrooke and NSERC of Canada. The third author was supported by
the EPSRC funded project EP/P016294/1.

\section{Preliminaries}\label{prelim}

We consider a finite dimensional algebra $A$ over a field $k$ of the form $A = kQ/I$ where $Q$ is a quiver and $I$ an admissible ideal of the path algebra $kQ$. 
For an algebraically closed field $k$, every finite-dimensional $k$-algebra is Morita-equivalent to an algebra of the form $kQ/I$.
We study the category $\mod A$ of finitely generated modules  over $A$. 

Its Grothendieck group $K_0(A)$ is free abelian of finite rank $n$, where $n$ is the number of vertices of the quiver $Q$. 
In this paper we consider the isomorphism that assigns to the class $[M]$ in $K_0(A)$ of any $A$-module its dimension vector. 
By abuse if notation $[M]$ represents both the class of $M$ in $K_0(A)$ and its dimension vector. 

The $\tau$-tilting theory was introduced by Adachi, Iyama and Reiten in \cite{AIR}, where $\tau$ denotes the Auslander-Reiten translation in $\mod A$. 
It extends classical tilting theory from the viewpoint of mutation, providing a framework for studying problems arising from cluster algebras. 
In this paper the $\tau$-rigid and $\tau$-tilting pairs play a central role. 
They are defined as follows. 

\begin{defi}\cite[Definition 0.1 and 0.3]{AIR}\label{list}
Consider an $A$-module $M$ and a projective $A-$module $P$. The pair $(M,P)$ is said \textit{$\tau$-rigid} if:
		\begin{itemize}
			\item $\Hom_{A}(M,\tau M)=0$;
			\item $\Hom_{A}(P,M)=0$.
		\end{itemize}
The notion of a $\tau$-rigid pair generalizes the one of a $\tau$-rigid $A$-module $M$, which is given just by the first condition $\Hom_{A}(M,\tau M)=0$.
We say moreover that a $\tau$-rigid pair $(M,P)$ is \textit{$\tau$-tilting} if $|M|+|P|=n$, and \textit{almost $\tau$-tilting} if $|M|+|P|=n-1$.
Here we denote by $|X|$ the number of  direct summands of $X$.        
\end{defi}

Note that we assume our algebras and modules to be basic, that is, all its indecomposable direct summands are non-isomorphic.	
That is, we write a $\tau$-rigid pair $(M,P)$ as $M=\bigoplus_{i=1}^kM_i$ and $P=\bigoplus_{j=k+1}^tP_j$ 
with all $M_i$ indecomposable and non-isomorphic, and all $P_j$ indecomposable projective and non-isomorphic, so 
$(M,P)$ is $\tau$-tilting precisely when $t=n$.

As usual, we denote the right perpendicular category of a module $M$ by 
$$M^{\perp}=\{X\in\mod A: \Hom_A(M,X)=0 \}$$ and, dually, 
$$^{\perp}M=\{Y\in\mod A: \Hom_A(Y,M)=0\}.$$
Recall that, for a given $A$-module $M$, the full subcategory Fac$M$ of $\mod A$ is defined as
$$\mbox{Fac}M = \{ X \in \mod A : \mbox{ there is an epimorphism } M^l \to X \mbox{ for some } l \in \N\}.$$
Remember that a torsion pair $(\T,\F)$ is a pair of full subcategories of mod $A$ such that :
\begin{itemize}
\item $\Hom_A (X, Y)=0$ for every $X\in \T$ and every $Y\in \F$.
\item Maximality of $\T$: If $X$ is an $A$-module with $\Hom_A(X,F) = 0$ for all $F$ in $\F$, then $X$ belongs to $\T$.
\item Maximality of $\F$: If $Y$ is an $A$-module with $\Hom_A(T,Y) = 0$ for all $T$ in $\T$, then $Y$ belongs to $\F$.
\end{itemize}
Note that a torsion pair automatically satisfies that
\begin{itemize}
\item $\T$ is closed under quotients and extensions,
\item $\F$ is closed under submodules and extensions.
\end{itemize}

We now describe how $\tau$-tilting theory allows to describe all functorially finite torsion classes of $\mod A$ in terms of $\tau$-tilting pairs.

\begin{theorem}\cite[Theorem 2.7]{AIR}\cite[Theorem 5.10]{ASfuncfiniteFac}\label{TorClass}

Defining $\Phi(M,P)=\emph{Fac} M$ yields a function from $\tau$-rigid pairs to functorially finite torsion classes.
Moreover, $\Phi$ is a bijection when restricted to $\tau$-tilting pairs.
\end{theorem}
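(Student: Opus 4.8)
The plan is to treat the two assertions separately, noting first that $\Phi(M,P)=\Fac M$ does not see the projective part $P$, so the content of the first claim is entirely about the module $M$ of a $\tau$-rigid pair. Since a full subcategory is a torsion class exactly when it is closed under quotients and extensions, it suffices to check these two properties. Closure under quotients is immediate: composing an epimorphism $M^l\to X$ with an epimorphism $X\to Y$ exhibits $Y$ as a quotient of $M^l$. The essential point is closure under extensions, and this is where the hypothesis $\Hom_A(M,\tau M)=0$ enters. First I would show that $\Hom_A(X,\tau M)=0$ for every $X\in\Fac M$: any map $X\to\tau M$ precomposed with a fixed epimorphism $M^l\to X$ yields an element of $\Hom_A(M^l,\tau M)=0$, and since $M^l\to X$ is epic the original map vanishes. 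By the Auslander--Reiten formula $\Ext^1_A(M,X)\cong D\overline{\Hom}_A(X,\tau M)$, this forces $\Ext^1_A(M,X)=0$ for all $X\in\Fac M$.

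With this vanishing in hand, extension closure becomes a diagram chase: given a short exact sequence $0\to X\to Y\to Z\to 0$ with $X,Z\in\Fac M$, I would choose epimorphisms $M^a\to X$ and $M^b\to Z$; the latter lifts through $Y\to Z$ because $\Ext^1_A(M^b,X)=0$, and combining the lift with the composite $M^a\to X\to Y$ produces an epimorphism $M^{a+b}\to Y$, so $Y\in\Fac M$. Thus $\Fac M$ is a torsion class. For functorial finiteness I would invoke the cited theorem of Auslander--Smalø: a torsion class generated, as $\Fac M$, by a single module is functorially finite; alternatively one checks contravariant finiteness directly via the trace of $M$ in an arbitrary module and covariant finiteness from the torsion-pair structure. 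This settles the first claim.

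For the second claim I would show that, restricted to $\tau$-tilting pairs, $\Phi$ is both injective and surjective onto functorially finite torsion classes. For injectivity the key is that $M$ is recoverable from $\T=\Fac M$: the vanishing $\Ext^1_A(M,\Fac M)=0$ shows every indecomposable summand of $M$ is $\Ext$-projective in $\T$, and for a $\tau$-tilting pair these exhaust the indecomposable $\Ext$-projectives of $\T$, a datum intrinsic to $\T$; hence $\Fac M=\Fac M'$ forces $M\cong M'$, and the projective parts then agree because each is the sum of the indecomposable projectives $P_i$ with $\Hom_A(P_i,M)=0$, i.e. those indexed by vertices outside the support of $M$. For surjectivity, given a functorially finite torsion class $\T$, Auslander--Smalø provides the basic sum $M$ of its indecomposable $\Ext$-projectives with $\T=\Fac M$; one then checks that $M$ is $\tau$-rigid and that, after adjoining the projectives $P$ indexed by the vertices outside the support of $\T$, the pair $(M,P)$ is $\tau$-tilting, i.e. $|M|+|P|=n$.

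The main obstacle is precisely this last surjectivity step: verifying that the $\Ext$-projective generator of an arbitrary functorially finite torsion class is genuinely $\tau$-rigid, and that the count $|M|+|P|=n$ holds, so that one lands among $\tau$-tilting rather than merely $\tau$-rigid pairs. This is the substantive input of $\tau$-tilting theory supplied by the cited correspondence of Adachi--Iyama--Reiten: the identification of the indecomposable $\Ext$-projectives of $\T$ with the $\tau$-rigid summands, together with the matching of the rank count to the number of simple modules, is where the real work lies, whereas the torsion-class and functorial-finiteness statements are comparatively formal.
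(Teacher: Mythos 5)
The paper offers no proof of this statement: it is imported wholesale from \cite{AIR} and \cite{ASfuncfiniteFac}, so there is no internal argument to measure your attempt against. Taken as a reconstruction of the standard proof, your outline is correct and follows the expected route: quotient-closure of $\Fac M$ is formal; extension-closure reduces, via $\Hom_A(X,\tau M)=0$ for all $X\in\Fac M$ and the Auslander--Reiten formula, to $\Ext^1_A(M,\Fac M)=0$ and a lifting argument; and the bijection is mediated by the assignment sending a functorially finite torsion class $\T$ to the direct sum of its indecomposable Ext-projectives. Two remarks. First, in the injectivity step you implicitly use that for a $\tau$-tilting pair $(M,P)$ the module $P$ is the sum of \emph{all} indecomposable projectives $P_i$ with $\Hom_A(P_i,M)=0$; this is true, but it rests on the fact that $M$ is a sincere module over the support algebra (\cite[Propositions 2.2 and 2.3]{AIR}), not merely on the definition of a $\tau$-tilting pair. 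Second, as you yourself flag, everything that makes the theorem nontrivial --- that a torsion class of the form $\Fac M$ is covariantly (not just contravariantly) finite, that the indecomposable Ext-projectives of $\Fac M$ are exactly the summands of $M$ when $(M,P)$ is $\tau$-tilting, and that for an arbitrary functorially finite torsion class the Ext-projective generator is $\tau$-rigid with the rank count $|M|+|P|=n$ --- is deferred to the very results being cited rather than proved. So your text is an accurate map of how the cited ingredients combine, not a self-contained proof; since the paper itself treats the statement as a black box, that is a reasonable standard to have met.
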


Every torsion pair $(\T, \F)$ in $\mod A$ has the following property. 
For each $A$-module $N$ there exists a short exact sequence, referred to as the canonical short exact sequence for $N$, 
$$0\ra tN\ra N\ra N/tN\ra 0$$
with $tN\in\T$ and $N/tN\in\F$. 
The module $tN$ is unique up to isomorphism, and is called the {\em trace} of $N$ in $\T$. 
For a $\tau$-rigid pair, the trace of $N$ can be obtained as follows:

\begin{lem}\label{approx}
Let $(M,P)$ be a $\tau$-rigid pair and $N$ an $A-$module. 
Then the trace $tN$ of $N$ with respect to the torsion pair $(\text{Fac } M, M^\perp)$ can be computed as $tN = \Im f$ where $f: M'\to N$ is the minimal right $\add M$-approximation of $N$.
\end{lem}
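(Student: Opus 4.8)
The plan is to verify directly that $\Im f$ satisfies the two defining properties of the trace of $N$ in the torsion class $\Fac M$, and then to invoke the uniqueness of the trace. First I would record two preliminary facts about the torsion pair $(\Fac M, M^\perp)$. By Theorem \ref{TorClass} the subcategory $\Fac M$ is indeed a (functorially finite) torsion class, and its associated torsionfree class is exactly $M^\perp$: if $\Hom_A(M,Y)=0$, then any morphism $T\to Y$ with $T\in\Fac M$ becomes zero after precomposition with an epimorphism $M^l\twoheadrightarrow T$ (as $\Hom_A(M^l,Y)=0$), hence is itself zero, giving $Y\in(\Fac M)^\perp$; the reverse inclusion is immediate since $M\in\Fac M$. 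I would also recall the standard characterisation of the trace: for any torsion pair $(\T,\F)$, the trace $tN$ is the \emph{largest} submodule of $N$ lying in $\T$, because any submodule $U\subseteq N$ with $U\in\T$ maps to zero in $N/tN\in\F$ and is therefore contained in $tN$.

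With these preliminaries in place, the proof splits into two inclusions. For $\Im f\subseteq tN$, I observe that $\Im f$ is a quotient of $M'\in\add M$, so $\Im f\in\Fac M$; being a submodule of $N$ that belongs to the torsion class, it is contained in the largest such submodule, namely $tN$. For the reverse inclusion $tN\subseteq\Im f$, I would use that $tN\in\Fac M$ to choose an epimorphism $e\colon M^l\twoheadrightarrow tN$, and compose it with the inclusion $tN\hookrightarrow N$ to obtain a morphism $g\colon M^l\to N$ with $\Im g=tN$. Since $M^l\in\add M$ and $f$ is a right $\add M$-approximation, $g$ factors as $g=f\circ g'$ for some $g'\colon M^l\to M'$, whence $tN=\Im g\subseteq\Im f$. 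Combining the two inclusions yields $tN=\Im f$.

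The argument is essentially soft, and the only genuinely nontrivial input is external: the fact that $(\Fac M, M^\perp)$ is a torsion pair at all, which rests on $\Fac M$ being closed under extensions and is guaranteed by the $\tau$-rigidity of $(M,P)$ through Theorem \ref{TorClass}. Everything else is a formal manipulation of the maximality of the trace together with the universal factorization property of the approximation $f$; in particular the minimality of $f$ plays no role in computing the image, since the same submodule $tN$ is obtained from \emph{any} right $\add M$-approximation, minimality serving only to single out a canonical representative. If one preferred an approach closer to Auslander--Reiten theory, one could instead establish $\Hom_A(M,N/\Im f)=0$ by applying $\Hom_A(M,-)$ to the short exact sequence $0\to\Im f\to N\to N/\Im f\to 0$ and using that the $\tau$-rigid module $M$ is Ext-projective in $\Fac M$, so that $\Ext^1_A(M,\Im f)=0$; but the inclusion argument above avoids this machinery entirely.
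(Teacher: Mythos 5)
Your proof is correct and follows essentially the same route as the paper's: both directions are obtained exactly as in the paper, by factoring an epimorphism $M^l\twoheadrightarrow tN$ through the approximation $f$ to get $tN\subseteq\Im f$, and by noting that $\Im f$ is a submodule of $N$ lying in $\Fac M$, hence contained in the trace. Your version is in fact slightly cleaner, since you phrase both comparisons as genuine containments of submodules of $N$ rather than ``isomorphic to a submodule of,'' and you correctly observe that minimality of the approximation is not needed.
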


\begin{proof}
A  morphism $f:M' \to N$ is called a right $\add M$-approximation of $N$ if $M' \in \add M$ and  for all $X \in \add M$ the induced map $\Hom(X,f)$ is surjective.
We have that $tN\in \Fac M$ by definition, therefore there is a natural number $l$ and an epimorphism $p:M^l \to tN$. 
This $p$ factors through $f$ because $f$ is the minimal right $\add M$-approximation of $N$. 
Hence $tN$ is isomorphic to a submodule of Im$f$.
Conversely, Im $f$ is a submodule of $N$ which belongs to $\Fac M$, and therefore Im $f$ is isomorphic to a submodule of $tN$. 
\end{proof}

Another important feature of $\tau$-tilting pairs is the fact that every almost $\tau$-tilting pair can be completed to a $\tau$-tilting pair in exactly two different ways:

\begin{theorem}\label{mutation}\cite[Theorem 2.8]{AIR}
Let $(M,P)$ be an almost $\tau$-tilting pair. Then there exist exactly two different $\tau$-tilting pairs $(M_1, P_1)$ and $(M_2, P_2)$ such that $M$ is a direct summand of both $M_1$ and $M_2$, and $P$ is a direct summand of $P_1$ and $P_2$. 
In that case we say that $(M_1, P_1)$ and $(M_2, P_2)$ are a \textit{mutation} of each other. 
\end{theorem}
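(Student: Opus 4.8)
The plan is to produce the two completions explicitly from an exchange sequence and then to prove that no third completion can exist. Throughout I use that an almost $\tau$-tilting pair has $|M|+|P|=n-1$, so that a completion means adjoining exactly one indecomposable summand, either an indecomposable module to $M$ or an indecomposable projective to $P$. I would first record that at least one completion exists: this is the $\tau$-tilting analogue of Bongartz completion, namely that any $\tau$-rigid pair embeds into a $\tau$-tilting pair. Fix one such completion; after relabelling, assume it is $(M\oplus X, P)$ with $X$ indecomposable (the case where the completing summand is projective is handled dually, replacing the left approximation below by the bookkeeping of projective presentations).

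Next I would construct the second completion from an exchange sequence. Form the minimal left $\add M$-approximation $f\colon X\to M'$ of $X$ and set $Y=\coker f$, giving a right exact sequence $X\xrightarrow{f} M'\to Y\to 0$. There are two outcomes: when $Y\neq 0$ (and one checks that the $\tau$-rigidity of $X$ forces $f$ to be a monomorphism, so the sequence is short exact), the candidate second completion is $(M\oplus Y, P)$; when the approximation degenerates, the correction lands in the projective part and the second completion has the form $(M, P\oplus Q)$, with $Q$ read off from the projective presentation of $X$. This case split is exactly the reason the statement allows both ``module'' and ``projective'' summands.

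I would then verify that the constructed pair is genuinely $\tau$-tilting. This amounts to three checks: that the number of indecomposable summands is again $n$; that $\Hom_A(P, M\oplus Y)=0$; and, the substantive point, that $\Hom_A(M\oplus Y,\tau(M\oplus Y))=0$. The last is established by applying $\Hom_A(-,\tau-)$ to the exchange sequence and using the $\tau$-rigidity of $M$ together with the defining surjectivity property of the minimal left $\add M$-approximation (for all $Z\in\add M$ the map $\Hom_A(M',Z)\to\Hom_A(X,Z)$ is onto), which propagates the required vanishing across the sequence via the functorial behaviour of $\tau$ and the Auslander--Reiten formula.

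Finally, and this is where I expect the real difficulty to lie, I would prove uniqueness: every $\tau$-tilting completion of $(M,P)$ coincides with one of the two just produced. The cleanest route uses Theorem \ref{TorClass}: a completion $(\mathbf{M},\mathbf{P})$ yields the functorially finite torsion class $\Fac\mathbf{M}\supseteq\Fac M$, and one shows that the completions of $(M,P)$ are in bijection with the neighbours of the torsion data determined by $(M,P)$ in the Hasse order, of which there are exactly two; the injectivity in Theorem \ref{TorClass} then forbids a third. The rigidity of the exchange sequence is what pins the completing summand down to $X$ or $Y$ (respectively $Q$): any completing indecomposable must induce the same minimal approximation. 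The main obstacle is precisely this step, since existence and the $\tau$-rigidity verification are essentially mechanical, whereas ruling out a third completion requires global control of the poset of neighbours rather than only the local approximation datum.
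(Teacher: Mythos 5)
The paper does not prove this statement at all: it is imported verbatim as \cite[Theorem 2.8]{AIR} and used as a black box, so there is no in-paper argument to compare yours against. Judged on its own terms, your outline follows the broad shape of the Adachi--Iyama--Reiten proof (Bongartz-type completion for existence, an exchange sequence built from a minimal left $\add M$-approximation for the second completion, the torsion-class bijection of Theorem \ref{TorClass} for uniqueness), but it contains one false intermediate claim and one genuine gap.

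The false claim: the minimal left $\add M$-approximation $f\colon X\to M'$ need not be a monomorphism even when $Y=\coker f\neq 0$. For the preprojective algebra of type $\mathbb{A}_2$ with $M=P(1)$ and $X=P(2)$, the approximation $P(2)\to P(1)$ has image the radical of $P(1)$, kernel $S(1)$ and cokernel $S(1)\neq 0$; the other completion is indeed $(P(1)\oplus S(1),0)$, yet $f$ is not injective. So $\tau$-rigidity of $X$ does not force short exactness; the exchange sequence in the $\tau$-tilting setting is only right exact, and the construction must be run with that weaker exactness (which is possible, since only the cokernel is used).

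The gap is in uniqueness, exactly where you predicted the difficulty. Asserting that the completions biject with ``the neighbours of the torsion data determined by $(M,P)$ in the Hasse order, of which there are exactly two'' assumes precisely what the theorem claims; no argument is offered for why a third neighbour, or a third completion not visible to the single approximation you computed, cannot occur. The substance of the actual proof is to show that completions of $(M,P)$ correspond to suitable functorially finite torsion classes in the interval between $\Fac M$ and ${}^{\perp}(\tau M)\cap P^{\perp}$, and then to reduce, via what this paper records as Theorem \ref{tautiltred} ($\tau$-tilting reduction), to counting support $\tau$-tilting modules over the rank-one algebra $C_{(M,P)}$ --- a local algebra, which has exactly two, namely zero and the algebra itself. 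Without that reduction, or an equivalent global mechanism, your sketch establishes at most that there are at least two completions, not exactly two.
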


We further recall from \cite{AIR} that in the setting of the previous theorem, one torsion class covers the other, say $\text{Fac} M=\text{Fac} M_1\subsetneq\text{Fac}M_2$.   
Writing $M_2=M'\oplus M$ with an indecomposable $\tau$-rigid module $M'$, we say that $M_2$ is a left mutation of $M_1$.

More generally, one can consider the problem of finding all $\tau$-tilting pairs having a given $\tau$-rigid pair $(M,P)$ as a direct summand. 
This problem was solved by Jasso in \cite{J} using a procedure that he called \textit{$\tau$-tilting reduction}. 
Here we give a brief summary of that process. 

First, by Theorem \ref{TorClass} one knows that $(M,P)$ yields the torsion class $\Fac M$. 
But there is another torsion class given by $(M,P)$, namely the class $^\perp(\tau M) \cap P^\perp$. 
By \cite[Theorem 2.12]{AIR}, these two torsion classes coincide if and only if $(M,P)$ is a $\tau$-tilting pair. 
Moreover, Theorem \ref{TorClass} together with \cite[Theorem 2.9]{AIR} implies the existence of a $\tau$-tilting pair of the form $(M\oplus M', P)$ such that $\Fac (M\oplus M')= ^\perp(\tau M)\cap P^\perp$.

In the endomorphism algebra $B_{(M,P)}=\text{End}_A(M\oplus M')$, there is an idempotent element $e_{(M,P)}$ associated to the $B_{(M,P)}$-projective module $\Hom_{A}(M\oplus M', M)$. 
We define the algebra $C_{(M,P)}$ as the quotient of $B_{(M,P)}$ by the ideal generated by $e_{(M,P)}$, \textit{i.e.},
$$C_{(M,P)}:=B_{(M,P)}/B_{(M,P)} e_{(M,P)} B_{(M,P)}.$$

Now we are able to state one of the main theorems of \cite{J}.

\begin{theorem}\cite[Theorem 3.8]{J}\label{tautiltred}
Let $(M,P)$ be a $\tau$-rigid pair in $\emph{mod} A$. 
Then the functor $$\emph{Hom}_A (M\oplus M', -): \emph{mod} A \to \emph{mod} B_{(M,P)}$$ induces an equivalence of categories 
$$F:M^\perp \cap\; ^\perp(\tau M) \cap P^\perp \to \emph{mod} C_{(M,P)}$$ between the perpendicular category $M^\perp \cap \;^\perp(\tau M) \cap P^\perp$ of $(M,P)$ and the module category $\emph{mod} C_{(M,P)}$.
\end{theorem}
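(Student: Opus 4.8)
Since $(M,P)$ is $\tau$-rigid, the completion $(\bar M, P)$ with $\bar M = M\oplus M'$ is a $\tau$-tilting pair and $\T := \Fac \bar M = {}^{\perp}(\tau M)\cap P^{\perp}$ is a functorially finite torsion class in which $\bar M$ is an Ext-projective generator. Writing $B = B_{(M,P)}$, $C = C_{(M,P)}$, $e = e_{(M,P)}$ and $G = \Hom_A(\bar M,-)\colon \mod A \to \mod B$, my plan is to establish three facts: that $G$ is fully faithful on $\T$; that for $X\in\T$ one has $GX\in\mod C$ precisely when $X\in M^{\perp}$; and that $G$ restricted to $M^{\perp}\cap\T$ is essentially surjective onto $\mod C$. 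Since $P^{\perp}$ is the module category of the support algebra $A/\langle e_P\rangle$, over which $\bar M$ becomes an honest $\tau$-tilting module, I may reduce to that algebra at the outset and work inside $\T$ throughout.

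For full faithfulness I would exploit that $\bar M$ is an Ext-projective generator of $\T$: every $X\in\T$ admits an exact $\add\bar M$-presentation $\bar M_1\to\bar M_0\to X\to 0$, which $G$ carries to a projective presentation $G\bar M_1\to G\bar M_0\to GX\to 0$ over $B$. Comparing $\Hom_A(X,Y)$ and $\Hom_B(GX,GY)$ through these presentations, and using that $G$ is bijective on $\add\bar M$ by the Yoneda isomorphism, then yields $\Hom_A(X,Y)\cong\Hom_B(GX,GY)$ for all $X,Y\in\T$.

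To identify the essential image with $\mod C$, observe that the indecomposable projective $B$-modules are the $\Hom_A(\bar M,M_i)$ and the $\Hom_A(\bar M,M'_j)$, and that those coming from $M$, namely $\Hom_A(\bar M,M)$, are exactly the projectives killed in the quotient $B\to C=B/BeB$. Hence for $X\in\T$ we have $GX\in\mod C$ iff $e$ annihilates $GX$ iff $\Hom_A(M,X)=0$, i.e.\ iff $X\in M^{\perp}$; this simultaneously shows $G$ maps $M^{\perp}\cap\T$ into $\mod C$ and that nothing outside $M^{\perp}$ does. For essential surjectivity I would take a $C$-module $Y$, lift a projective presentation over $C$ to one of the form $\Hom_A(\bar M,M'_1)\to\Hom_A(\bar M,M'_0)\to Y\to 0$ with $M'_0,M'_1\in\add M'$, use full faithfulness to write the first map as $G(g)$ for a morphism $g\colon M'_1\to M'_0$, and set $X=\coker g$. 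Since $\T$ is closed under quotients, $X\in\T$; right-exactness of $G$ on this presentation gives $GX\cong Y$, and $e\cdot GX = e\cdot Y = 0$ forces $X\in M^{\perp}$.

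The main obstacle is the failure of $G$ to be exact in general: $\Hom_A(\bar M,-)$ is only left exact, so every passage between a cokernel in $\mod A$ and a cokernel in $\mod B$ — both in the full-faithfulness comparison and in reconstructing $X$ from $Y$ — must be justified by the Ext-projectivity of $\bar M$ inside $\T$, which is what guarantees that the relevant $\add\bar M$-presentations stay exact after applying $G$. Controlling this right-exactness precisely, together with the reduction to the support algebra, is where the genuine work of the proof concentrates.
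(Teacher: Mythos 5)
First, a point of comparison: the paper offers no proof of this statement at all --- it is quoted verbatim as \cite[Theorem 3.8]{J} (Jasso's $\tau$-tilting reduction theorem), so there is no in-paper argument to measure yours against. Your overall architecture (reduce to the support algebra so that $\bar M=M\oplus M'$ is genuinely $\tau$-tilting with $\Fac\bar M={}^{\perp}(\tau M)\cap P^{\perp}$, use full faithfulness of $G=\Hom_A(\bar M,-)$ on $\Fac\bar M$, and detect membership in $\mod C$ via $e\cdot GX\cong\Hom_A(M,X)$) is the standard one and is sound; note only that an ``exact $\add\bar M$-presentation'' of $X\in\T$ need not exist in $\mod A$ --- what one actually has is a complex $\bar M_1\to\bar M_0\to X\to 0$, exact at $X$, whose image under $G$ is exact, which suffices for full faithfulness.

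The genuine gap is in essential surjectivity, and it is twofold. (i) Ext-projectivity of $\bar M$ in $\T$ gives $\Ext^1_A(\bar M,\im g)=0$ (since $\im g$ is a quotient of $M'_1$, hence in $\T$) and therefore $\ker\bigl(GM'_0\to GX\bigr)=G(\im g)$; but it does \emph{not} give surjectivity of $GM'_1\to G(\im g)$, because the obstruction lives in $\Ext^1_A(\bar M,\ker g)$ and $\ker g$ is a \emph{submodule} of $M'_1$, not a quotient, so it need not lie in $\T$. Hence $\coker_B(Gg)$ is in general only a $B$-module surjecting onto $GX$, and your appeal to Ext-projectivity does not close the step you yourself flag as the main obstacle. (ii) Independently, lifting a $C$-projective presentation of $Y$ to a map of $B$-projectives $\phi\colon\Hom_A(\bar M,M'_1)\to\Hom_A(\bar M,M'_0)$ only guarantees $Y\cong\coker_B(\phi)/(BeB)\coker_B(\phi)$, not $Y\cong\coker_B(\phi)$. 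Your final line ``$e\cdot GX=e\cdot Y=0$ forces $X\in M^{\perp}$'' therefore presupposes exactly what has to be proved: $X=\coker g$ is merely a quotient of a module in $\add M'$, so there is no reason for $\Hom_A(M,X)\cong e\cdot GX$ to vanish. The standard repair is to replace $X$ by $X/t_MX$, where $t_MX$ is the trace of $M$ in $X$: this module lies in $M^{\perp}\cap\T$; since $t_MX\in\Fac M\subseteq\T$ one has $\Ext^1_A(\bar M,t_MX)=0$, so $G(X/t_MX)\cong GX/G(t_MX)$, and by full faithfulness on $\T$ the submodule $G(t_MX)$ is the trace of the projective $Be\cong\Hom_A(\bar M,M)$ in $GX$, i.e.\ $(BeB)\,GX$. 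Carrying this out, together with an honest treatment of point (i), is where the content of Jasso's proof lies; as written, your argument for essential surjectivity does not go through.
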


\section{Hyperplane arrangements and cone complex}\label{sect:conecomplex}

We first describe in this section the wall and chamber structure of $\R^n$ induced by the algebra $A$, following freely the exposition in \cite{B16}. 
Traditionally, stability conditions are formulated with respect to a linear form 
$K_0(A)\otimes\mathbb{R} \cong \R^n \ra \mathbb{R}$. However, we prefer to draw a stability condition $\theta$ and the class $[M] \in K_0(A)$ of a module $M \in \mod A$ in the same picture, therefore we work with vectors $\theta \in \R^n$.
To comply with the usual notation, we sometimes write $\theta(M)$ for the standard inner product $\langle \theta , [M] \rangle$  on $\R^n$. 
\subsection{The wall and chamber structure of an algebra}

We recall from King in \cite{Ki} the notion of stability for modules:

\begin{defi}\cite[Definition 1.1]{Ki}\label{stable}
For $\theta \in \R^n$, 
a non-zero module $M\in\mod A$ is called \textit{$\theta$-stable}  if it is orthogonal to $\theta$, that is, $\theta(M)=0$, and $\theta(L)<0$  for every proper submodule $L$ of $M$. 
Moreover, a module $M$ orthogonal to $\theta$ is called \textit{$\theta$-semistable} if
$\theta(L)\leq 0$ for every submodule $L$ of $M$.
\end{defi}

A central notion in this section is given by the set of all values $\theta$ that turn a given module semistable:

\begin{defi}
The \textit{stability space of an $A$-module $M$} is $$\D(M)=\{\theta\in \R^n : M \text{ is $\theta$-semistable}\}.$$
\end{defi}

It is clear from the above definitions that $\D(M)$ is a cone given by intersections of hyperplanes in $\R^n$. 
We say the stability space $\D(M)$ of $M$ is  a \textit{wall} when $\D(M)$ has codimension one. 
We refer in this case to $\D(M)$ as the wall defined by $M$.
Not every $\theta$ belongs to the stability space $\D(M)$ for some nonzero module $M$,  for instance the vector
$\theta=(1, \ldots, 1)$ is never orthogonal to the dimension vector of a non-zero module.
More generally, none of the vectors having all strictly positive or all strictly negative entries is orthogonal to any dimension vector.
This leads to the following definition.

\begin{defi}
Let $$\mathfrak{R}=\R^n\setminus\overline{\bigcup\limits_{\substack{ M\in \mod A}}\D(M)}$$
denote the maximal open set of $\theta$ having no $\theta$-semistable non-zero modules.
Then a connected component $\mathfrak{C}$ 
of $\mathfrak{R}$ is called a \textit{chamber}. 
\end{defi}

We illustrate this wall and chamber structure by the following example.

\begin{ex}\label{runningexample}
Consider the path algebra $\mathbb{A}_2=kQ$ of the quiver $Q=\xymatrix{1\ar[r]& 2}$. Its  Auslander-Reiten quiver is as follows:

\begin{center}
			\begin{tikzpicture}[line cap=round,line join=round ,x=2.0cm,y=1.8cm]
				\clip(-1.2,-0.1) rectangle (1.2,1.1);
					\draw [->] (-0.8,0.2) -- (-0.2,0.8);
					\draw [dashed] (-0.8,0.0) -- (0.8,0.0);
					\draw [->] (0.2,0.8) -- (0.8,0.2);
				
				\begin{scriptsize}
					\draw[color=black] (-1,0) node {$S(2)$};
					\draw[color=black] (0,1) node {$P(1)$};
					\draw[color=black] (1,0) node {$S(1)$};
				\end{scriptsize}
			\end{tikzpicture}
		\end{center}

The wall and chamber structure of $\mathbb{A}_2$ is illustrated in Figure \ref{w&cA2}.

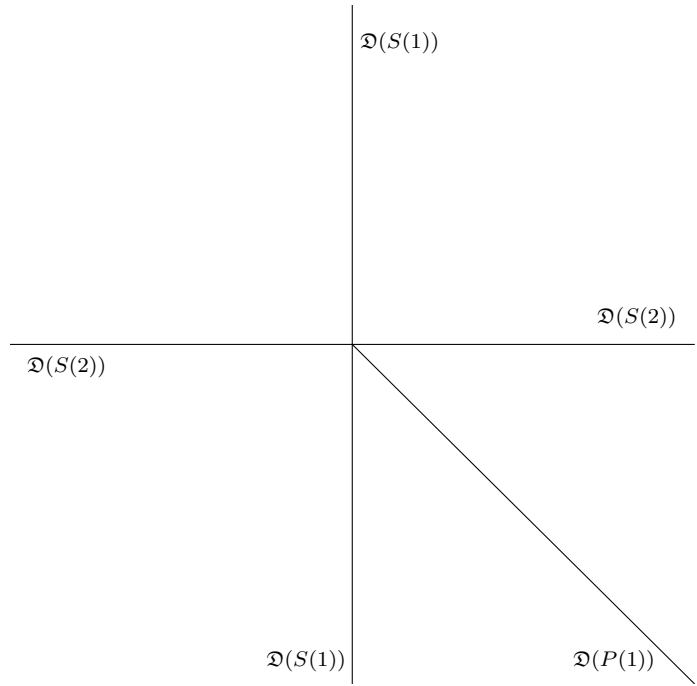
\begin{figure}
			\begin{center}
				\begin{tikzpicture}[line cap=round,line join=round,>=triangle 45,x=3.0cm,y=3.0cm]
					\clip(-1.5,-1.5) rectangle (1.5,1.5);
						\draw (0.,0.) -- (0.,1.5);
						\draw (0.,0.) -- (0.,-1.5);
						\draw [domain=-1.5:0.0] plot(\x,{(-0.-0.*\x)/-1.});
						\draw [domain=0.0:1.5] plot(\x,{(-0.-1.*\x)/1.});
						\draw [domain=0.0:1.5] plot(\x,{(-0.-0.*\x)/1.});
					\begin{scriptsize}
						\draw[color=black] (0,1.25) node[anchor= south west] {$\mathfrak{D}(S(1))$};
						\draw[color=black] (-0.2,-1.4) node {$\mathfrak{D}(S(1))$};
						\draw[color=black] (-1.25,-0.1) node {$\mathfrak{D}(S(2))$};
						\draw[color=black] (1.15,-1.4) node {$\mathfrak{D}(P(1))$};
						\draw[color=black] (1.25,0.12) node {$\mathfrak{D}(S(2))$};
					\end{scriptsize}
				\end{tikzpicture}
			\end{center}
			\caption{Wall and chamber structure for $\mathbb{A}_2$}
            \label{w&cA2}
		\end{figure}
\end{ex}

Note that, in general there may be infinitely many walls that can even form dense regions in $\R^n$. 
We refer to the examples in \cite{Rea}, the example 1.3 in \cite{B16}, and the figures in \cite{DW}, the latter being formulated in the language of Schur roots. 
It is observed in \cite{Ki,Ru} that the dimension vector of a $\theta$-stable module is a brick in $\mod A$ for every $\theta \in \mathbb{R}^n$.

\subsection{The simplicial complex of $\tau$-rigid pairs}

We describe in this subsection the simplicial complex defined by Demonet, Iyama and Jasso in \cite{DIJ}. 
We start by recalling the notion of $g$-vectors first introduced by Dehy and Keller in \cite{DK} and later adapted to module categories by Adachi, Iyama and Reiten in \cite{AIR}. 

\begin{defi}
Let $M$ be an $A$-module. 
Choose the minimal projective presentation $$P_1\longrightarrow P_0\longrightarrow M\longrightarrow 0$$  of $M$, where $P_0=\bigoplus\limits_{i=1}^n P(i)^{c_i}$ and $P_1=\bigoplus\limits_{i=1}^n P(i)^{c'_i}$. 
Then the $g$-vector of $M$ is defined as
$$g^M=(c_1-c'_1, c_2-c'_2,\dots, c_n-c'_n).$$
The $g$-vector of a $\tau$-rigid pair $(M,P)$ is defined as $ g^{M}-g^{P}$.
\end{defi}

\begin{rmk}
Note that the canonical basis of $\mathbb{Z}^n$ correspond to
$$\{g^{P(1)}, \dots, g^{P(n)}\}$$
where $A=P(1)\oplus \dots \oplus P(n)$ is the decomposition of $A$ as a sum of indecomposable projective $A$-modules.
\end{rmk}

The following two results give important properties of the $g$-vectors of $\tau$-rigid and $\tau$-tilting pairs. 
The first of them  extends the previous remark to all $\tau$-tilting pairs.

\begin{theorem}\cite[Theorem 5.1]{AIR}\label{LI}
Let $(M,P)$ be a $\tau$-tilting pair, and denote the indecomposable summands as $M=\bigoplus_{i=1}^kM_i$ and $P=\bigoplus_{j=k+1}^n P_j$. Then the set
$$\{g^{M_1},\dots, g^{M_k}, -g^{P_{k+1}}, \dots, -g^{P_n}\}$$
forms a basis for $\mathbb{Z}^n$.
\end{theorem}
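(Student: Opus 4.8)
The plan is to pass from dimension vectors to the homotopy category $K^b(\proj A)$ and there recognise the listed vectors as the classes of the indecomposable summands of a silting complex. Recall that the minimal projective presentation $P_1 \to P_0 \to M \to 0$ of a module $M$ may be viewed as a two-term complex $P_M = (P_1 \to P_0)$ concentrated in homological degrees $-1$ and $0$, whose class in $K_0(K^b(\proj A)) \cong \Z^n$ is exactly $[P_0] - [P_1] = g^M$; likewise an indecomposable projective $P_j$ placed in degree $-1$ has class $-[P_j] = -g^{P_j}$. Under the correspondence of \cite{AIR} between support $\tau$-tilting pairs and two-term silting complexes, the pair $(M,P)$ corresponds to a silting complex $T$ whose indecomposable summands are the presentation complexes $P_{M_i}$ together with the stalks $P_j$ in degree $-1$; the classes of these summands are precisely the vectors $g^{M_1}, \dots, g^{M_k}, -g^{P_{k+1}}, \dots, -g^{P_n}$. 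The claim is thus that the classes of the summands of $T$ form a $\Z$-basis of $K_0(K^b(\proj A))$.

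I would then record a purely formal reduction that makes the problem tractable. Since $(M,P)$ is $\tau$-tilting we have $|M| + |P| = n$ (Definition \ref{list}), so the set consists of exactly $n$ vectors. Now any $n$ vectors that \emph{span} the free abelian group $\Z^n$ automatically form a basis: sending the canonical basis to these vectors defines a surjective endomorphism of $\Z^n$, and a surjective endomorphism of a finitely generated module over a commutative ring is an isomorphism. It therefore suffices to prove that the $n$ listed vectors span $\Z^n$.

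For the spanning statement I would invoke that $T$ is a silting object, so its additive closure generates $K^b(\proj A)$ as a triangulated (thick) subcategory; consequently the classes of the summands of $T$ generate $K_0(K^b(\proj A))$. In particular each stalk $P(i)$ lies in this thick closure, so its class $[P(i)] = g^{P(i)} = e_i$ belongs to the $\Z$-span of the listed vectors. As the $e_i$ form the canonical basis (see the Remark following the definition of $g$-vectors), the listed vectors span $\Z^n$, and by the previous paragraph they form a basis.

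The main obstacle is exactly this generation property, namely the input that a support $\tau$-tilting pair genuinely yields a silting complex whose thick closure is all of $K^b(\proj A)$; this is where the $\tau$-tilting hypothesis $|M|+|P|=n$ is essential, rather than mere $\tau$-rigidity, and it is the part that cannot be extracted by formal manipulation of projective presentations. A more self-contained alternative would be to argue by induction along mutations: the pair $(A,0)$ gives the identity matrix, and Theorem \ref{mutation} together with the exchange sequences $X \to E \to Y$, with $E$ in the additive closure of the common summands, shows that a single mutation replaces the vector $g^X$ by $-g^X$ plus an integral combination of the unchanged vectors; this is a unimodular column operation preserving the basis property. That route, however, additionally requires the connectivity of the mutation graph, which is itself nontrivial, so I would favour the silting formulation above.
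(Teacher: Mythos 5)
The paper itself offers no proof of this statement --- it is quoted verbatim from \cite{AIR} --- so there is no internal argument to compare against. Your overall route is the standard one (and essentially the one in the cited source): identify $g^{M_i}$ and $-g^{P_j}$ with the classes in $K_0(K^b(\proj A))\cong\Z^n$ of the indecomposable summands of the two-term silting complex attached to $(M,P)$, and reduce the basis claim to the statement that these classes span; the reduction from spanning to basis via ``a surjective endomorphism of a finitely generated module over a commutative ring is an isomorphism'' is correct, as is your identification of the summands and their classes.

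The one step that does not hold up as written is the spanning argument. You claim: $T$ silting $\Rightarrow$ $\add T$ generates $K^b(\proj A)$ as a thick subcategory $\Rightarrow$ the classes of the indecomposable summands of $T$ generate $K_0(K^b(\proj A))$. The second implication is false for a general thick generator. By Thomason's classification of dense triangulated subcategories, the triangulated (non-thick) subcategory generated by the shifts of $\add T$ consists exactly of the objects whose class lies in the subgroup $H=\langle [T_1],\dots,[T_n]\rangle$; passing to the thick closure can enlarge the subcategory to everything without enlarging $H$. A concrete counterexample lives in $K^b(\proj A)$ for $A$ the Kronecker algebra, i.e.\ in $D^b(\mathrm{coh}\,\mathbb{P}^1)$: the object $G=\mathcal{O}\oplus\mathcal{O}_{x,2}$ generates the whole category as a thick subcategory (one recovers $\mathcal{O}(-2)$ from the surjection $\mathcal{O}\to\mathcal{O}_{x,2}$ and then $\mathcal{O}(-1)$ as a summand of the middle term of the twisted Euler sequence $0\to\mathcal{O}(-2)\to\mathcal{O}(-1)^2\to\mathcal{O}\to 0$), yet the classes of its two indecomposable summands span only the index-two subgroup $\Z\oplus 2\Z$ of $K_0\cong\Z^2$. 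What rescues the argument for a \emph{silting} object is precisely the vanishing $\Hom(T,T[i])=0$ for $i>0$: it provides, for every $X$ in $K^b(\proj A)$, a finite tower of triangles with middle terms in $\add T$ and no summand-splitting, whence $[X]\in H$ directly. This is Theorem 2.27 of Aihara and Iyama (\emph{Silting mutation in triangulated categories}), and it is what you must cite or reprove in place of the word ``consequently''; with that substitution the proof is complete. Your caveat about the mutation-based alternative is well taken --- it does hinge on connectivity of the exchange graph, which is not available for general algebras.
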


\begin{theorem}\cite[Theorem 5.5]{AIR}\label{inj-g-vecteurs}
Let $M$ and $N$ be two $\tau$-rigid $A$-modules. Then $g^M=g^N$ if and only if $M$ is isomorphic to $N$.
\end{theorem}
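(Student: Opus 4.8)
The forward implication is immediate: a minimal projective presentation is unique up to isomorphism, so isomorphic modules have equal $g$-vectors. The whole content lies in the converse, and my plan is to first translate the $g$-vector into homological data and then invoke the linear independence of Theorem \ref{LI}.

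The key tool I would establish is the identity
$$\langle g^M, [X]\rangle = \dim_k \Hom_A(M,X) - \dim_k \Hom_A(X,\tau M)$$
valid for every module $X$. To prove it, apply $\Hom_A(-,X)$ to a minimal projective presentation $P_1 \xrightarrow{d} P_0 \to M \to 0$, obtaining the exact sequence $0 \to \Hom_A(M,X) \to \Hom_A(P_0,X) \to \Hom_A(P_1,X) \to \coker(d,X) \to 0$. Since $\dim_k\Hom_A(P(i),X) = [X]_i = \langle g^{P(i)}, [X]\rangle$, the difference $\dim_k\Hom_A(P_0,X) - \dim_k\Hom_A(P_1,X)$ equals $\langle g^M, [X]\rangle$, so it remains to identify the cokernel. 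Using $\Hom_A(P,X)\cong \Hom_A(P,A)\otimes_A X$ for projective $P$ together with the definition of the transpose $\mbox{Tr}\,M = \coker(\Hom_A(P_0,A)\to\Hom_A(P_1,A))$, right-exactness of the tensor product gives $\coker(d,X)\cong \mbox{Tr}\,M\otimes_A X$; tensor-hom duality over $k$ then yields $\dim_k\coker(d,X) = \dim_k\Hom_A(X, D\,\mbox{Tr}\,M) = \dim_k\Hom_A(X,\tau M)$.

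Now assume $M$ and $N$ are $\tau$-rigid with $g^M = g^N$. I claim it suffices to prove that $M\oplus N$ is $\tau$-rigid. Granting this, complete (the basic version of) $M\oplus N$ to a $\tau$-tilting pair and write its distinct indecomposable module summands as $L_1,\dots,L_r$; by Theorem \ref{LI} the vectors $g^{L_1},\dots,g^{L_r}$ belong to a basis of $\mathbb{Z}^n$ and are therefore linearly independent. Writing $M=\bigoplus_{i\in I}L_i$ and $N=\bigoplus_{i\in J}L_i$, the additivity of $g$-vectors on direct sums (a minimal presentation of a direct sum is the direct sum of minimal presentations) turns $g^M=g^N$ into $\sum_{i\in I}g^{L_i}=\sum_{i\in J}g^{L_i}$; linear independence forces $I=J$, hence $M\cong N$.

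It remains to show $M\oplus N$ is $\tau$-rigid, and this is where I expect the real difficulty. A clean sufficient condition is $\Fac M = \Fac N$: indeed $\tau$-rigidity of $M$ gives $M\in{}^\perp(\tau M)$, and since ${}^\perp(\tau M)$ is closed under quotients, direct sums and extensions we obtain $\Fac M\subseteq{}^\perp(\tau M)$; hence if $N\in\Fac N=\Fac M$ then $\Hom_A(N,\tau M)=0$, and symmetrically $\Hom_A(M,\tau N)=0$, making $M\oplus N$ $\tau$-rigid. Thus the crux is to deduce $\Fac M=\Fac N$ from $g^M=g^N$. The formula above already shows that $g^M$ determines the category ${}^\perp(\tau M)=\{X:\dim_k\Hom_A(M,X)=\langle g^M,[X]\rangle\}$, but recovering the smaller torsion class $\Fac M$ from this numerical data — equivalently, controlling the trace computed in Lemma \ref{approx} — is the main obstacle. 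I would attack it either through a finer analysis of the Hom-dimensions restricted to $\Fac M$, or by an induction on $n$ using the $\tau$-tilting reduction of Theorem \ref{tautiltred} to strip off a common summand and pass to a smaller algebra.
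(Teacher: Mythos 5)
The paper does not actually prove this statement; it imports it wholesale from \cite[Theorem 5.5]{AIR}, so there is no internal argument to compare yours with. Judged on its own terms, your proposal is incomplete: it reduces the theorem to the claim that $g^M=g^N$ forces $M\oplus N$ to be $\tau$-rigid, and then explicitly leaves that claim unproved, offering only two candidate strategies. That claim is not a technical loose end --- it is the entire content of the theorem. Everything you do establish (uniqueness of minimal projective presentations, the Auslander--Reiten formula $\langle g^M,[X]\rangle=\dim_k\Hom_A(M,X)-\dim_k\Hom_A(X,\tau M)$, additivity of $g$-vectors over direct sums, and the endgame via the linear independence of Theorem \ref{LI} after Bongartz-type completion) is standard and already present in the paper or in \cite{AIR}; the step you skip is exactly where \cite{AIR} does the real work, passing to $2$-term presilting complexes in $K^b(\proj A)$ and using the silting/torsion-class machinery to obtain the required Hom-vanishing between $M$ and $N$.

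Two further points. First, your intermediate target $\Fac M=\Fac N$ is sufficient but strictly stronger than what is needed: the statement only requires $\Hom_A(M,\tau N)=\Hom_A(N,\tau M)=0$, and aiming at equality of torsion classes may make the problem harder than necessary (for a $\tau$-rigid module that is not $\tau$-tilting, the torsion class $\Fac M$ is not determined by numerical data alone, as the gap between $\Fac M$ and ${}^\perp(\tau M)$ shows). Second, the remark that the formula shows ``$g^M$ determines ${}^\perp(\tau M)$'' is circular as stated: the description $\{X:\dim_k\Hom_A(M,X)=\langle g^M,[X]\rangle\}$ involves $M$ itself and not only $g^M$, so it cannot transfer the hypothesis $g^M=g^N$ into information about these subcategories. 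Indeed, applying your formula with $X=N$ and using $\tau$-rigidity of $N$ gives only $\dim_k\Hom_A(M,N)=\dim_k\Hom_A(N,N)+\dim_k\Hom_A(N,\tau M)$, which does not yield the vanishing you need. As it stands, the proposal is a correct reduction wrapped around an unproved core.
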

 
The following result of Auslander and Reiten \cite{AR1} yields a homological interpretation for the inner product of the $g$-vector of a module $M$ with the dimension vector $[N]$ of a module $N$.
For the sake of simplicity, we abbreviate $\dim_k(\Hom_A(M,N))$ by $\emph{hom}_A(M,N)$. 

\begin{theorem}[\cite{AR1}, Theorem 1.4.(a)]\label{formula}
Let $M$ and $N$ be modules over the algebra $A$. Then we have
$$\langle g^{M},[N]\rangle=hom_A(M,N)-hom_A(N,\tau_A M)$$
\end{theorem}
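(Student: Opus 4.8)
The plan is to derive the identity by applying two standard functors to a fixed minimal projective presentation $P_1 \xrightarrow{f} P_0 \to M \to 0$ of $M$ and comparing the resulting exact sequences by a dimension count. First I would record the elementary observation that $\dim_k \Hom_A(P(i),N)$ equals the $i$-th coordinate $[N]_i$ of the dimension vector of $N$. Writing $P_0=\bigoplus_i P(i)^{c_i}$ and $P_1=\bigoplus_i P(i)^{c_i'}$ and using additivity of $\Hom$ on direct sums, this gives $\hom_A(P_0,N)=\sum_i c_i[N]_i$ and $\hom_A(P_1,N)=\sum_i c_i'[N]_i$, so by the definition of the $g$-vector and bilinearity of $\langle-,-\rangle$ one obtains
$$\langle g^M,[N]\rangle=\hom_A(P_0,N)-\hom_A(P_1,N).$$

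Next I would apply the contravariant left exact functor $\Hom_A(-,N)$ to the presentation, producing the exact sequence
$$0\to\Hom_A(M,N)\to\Hom_A(P_0,N)\xrightarrow{f^*}\Hom_A(P_1,N)\to C\to 0,$$
where $C:=\coker f^*$. Taking the alternating sum of dimensions in this four-term sequence and substituting into the displayed identity yields
$$\langle g^M,[N]\rangle=\hom_A(M,N)-\dim_k C.$$
It then remains only to identify $\dim_k C$ with $\hom_A(N,\tau M)$.

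For this I would bring in the Nakayama functor $\nu=D\Hom_A(-,A)$, where $D=\Hom_k(-,k)$ is the standard $k$-duality (not to be confused with the stability spaces $\D$), together with the defining property of $\tau$: applying $\nu$ to $f$ presents $\tau M$ as a kernel, giving the exact sequence $0\to\tau M\to\nu P_1\xrightarrow{\nu f}\nu P_0$. Applying the left exact functor $\Hom_A(N,-)$ then identifies $\Hom_A(N,\tau M)$ with $\ker (\nu f)_*$. The crux is the natural isomorphism $\Hom_A(N,\nu P)\cong D\Hom_A(P,N)$ for projective $P$: by naturality in $P$ it converts the map $(\nu f)_*$ into the $k$-dual $D(f^*)$, and since $D$ is exact and interchanges the kernel of a dualized map with the dual of the original cokernel, one gets $\Hom_A(N,\tau M)\cong\ker D(f^*)\cong D(\coker f^*)=DC$. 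Hence $\hom_A(N,\tau M)=\dim_k C$, and combining this with the identity above completes the proof.

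I expect the only delicate step to be the bookkeeping in this final paragraph: one must verify that the adjunction-type isomorphism $\Hom_A(N,\nu(-))\cong D\Hom_A(-,N)$ is natural enough to turn $(\nu f)_*$ into $D(f^*)$, and keep careful track of the fact that dualizing swaps the kernel of the dual map with the dual of the cokernel. Everything else reduces to additivity of $\Hom$ and a routine Euler-characteristic argument on the two exact sequences.
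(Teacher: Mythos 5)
Your argument is correct and complete: the Euler-characteristic count on the exact sequence obtained by applying $\Hom_A(-,N)$ to the minimal presentation, combined with the exact sequence $0\to\tau M\to\nu P_1\to\nu P_0$ and the natural isomorphism $\Hom_A(N,\nu P)\cong D\Hom_A(P,N)$ for projective $P$, is precisely the standard proof of this formula. The paper gives no proof of its own, citing [AR1, Theorem 1.4(a)], and your derivation is the argument of that reference, with the one implicit point worth keeping in mind being that minimality of the presentation is what makes both the $g$-vector and the kernel of $\nu f$ come out exactly right.
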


The following formula is an application of \cite[Theorem 1.4]{AR1} to $\tau$-rigid pairs. It plays a central role in the remaining part of the paper.

\begin{cor}\label{tauformula}
Let $(M,P)$ be a $\tau$-rigid pair and $N$ be an $A$-module. Then
$$\langle g^{M}-g^{P},[N]\rangle=hom_A(M,N)-hom_A(N,\tau_A M)-hom_A(P,N).$$
\end{cor}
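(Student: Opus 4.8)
The plan is to derive Corollary \ref{tauformula} directly from Theorem \ref{formula} by exploiting the definition of the $g$-vector of a $\tau$-rigid pair and the bilinearity of the inner product. First I would recall that for a $\tau$-rigid pair $(M,P)$ the associated $g$-vector is defined as $g^{M}-g^{P}$, so that by linearity of $\langle -, [N]\rangle$ in its first argument we may write
$$\langle g^{M}-g^{P},[N]\rangle = \langle g^{M},[N]\rangle - \langle g^{P},[N]\rangle.$$
To each of these two summands I would apply Theorem \ref{formula}, which expresses $\langle g^{X},[N]\rangle = \hom_A(X,N)-\hom_A(N,\tau_A X)$ for any module $X$.

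Applying the formula to $X=M$ gives $\langle g^{M},[N]\rangle = \hom_A(M,N)-\hom_A(N,\tau_A M)$, which already accounts for the first two terms on the right-hand side of the claimed identity. The remaining step is to compute $\langle g^{P},[N]\rangle$ when $P$ is projective. Applying Theorem \ref{formula} to $X=P$ yields $\langle g^{P},[N]\rangle = \hom_A(P,N)-\hom_A(N,\tau_A P)$, so the key observation is that $\tau_A P = 0$ whenever $P$ is projective, whence the term $\hom_A(N,\tau_A P)$ vanishes and $\langle g^{P},[N]\rangle = \hom_A(P,N)$. Substituting both computations back and combining gives exactly
$$\langle g^{M}-g^{P},[N]\rangle = \hom_A(M,N)-\hom_A(N,\tau_A M)-\hom_A(P,N),$$
as required.

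I do not anticipate a serious obstacle here: the statement is essentially a formal consequence of the additive structure and the vanishing of the Auslander--Reiten translate on projectives. The only point requiring minor care is the justification that $\tau_A P=0$ for projective $P$, which is a standard fact about the Auslander--Reiten translation; once this is invoked, the rest is routine substitution and bookkeeping of the three homomorphism-space dimensions. In writing the argument I would be careful to state explicitly that the first argument of the pairing is linear and that $P$ is projective by the definition of a $\tau$-rigid pair, so that the cancellation of $\hom_A(N,\tau_A P)$ is transparent to the reader.
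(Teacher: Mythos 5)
Your proposal is correct and matches the paper's own argument exactly: the paper's proof is precisely to apply Theorem \ref{formula} twice, use bilinearity, and note that $\tau P=0$ for projective $P$. You have simply written out the routine bookkeeping that the paper leaves implicit.
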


\begin{proof}
It suffices to apply Theorem \ref{formula} twice and notice that $\tau P=0$.
\end{proof}

\begin{rmk}
Theorem \ref{formula} shows that for a $\tau$-tilting module $M$ of projective dimension at most one the inner product $\langle g^{M},[N]\rangle$ coincides with the Euler-Ringel form evaluated at the classes $[M],[N]$, which is also the skew-symmetric form considered by Bridgeland in \cite{B16}. 
\end{rmk}

In the next subsection we will show how the abstract simplicial complex of $\tau$-tilting pairs introduced in \cite{DIJ} is closely related to the support of the scattering diagrams introduced in \cite{B16}. 
In order to do that, we need to introduce a piece of notation.

For a set of linearly independent vectors $L=\{v_1, \dots, v_t\}$ in $\R^n$, we consider the \textit{polyhedral cone $\mathcal{C}_L$} of $L$ as $$\mathcal{C}_L=\left\{\sum\limits_{i=1}\limits^{t} \alpha_iv_i : \alpha_i \ge 0 \text{ for every $1\leq i\leq t$}\right\}.$$ 
Its interior is 
$$\rC_L=\left\{\sum\limits_{i=1}\limits^{t} \alpha_iv_i : \alpha_i>0 \text{ for every $1\leq i\leq t$}\right\}.$$
Since $L$ is a linearly independent set, the coefficients $\alpha_i$ of a vector $v = \sum\limits_{i=1}\limits^{t} \alpha_iv_i $ in $\Co_L$ are uniquely determined by $v$, and in particular all of them are non-negative. 
Consider now  a $\tau$-rigid pair $(M,P)$ where $M=\bigoplus\limits_{i=1}^kM_i$ and $P=\bigoplus\limits_{j=k+1}\limits^t P_j$ are the decomposition of $M$ and $P$ as sums of indecomposable modules, respectively. By Theorem \ref{LI}, their $g$-vectors are linearly independent, and we consider the polyhedral cone $\mathcal{C}_{(M,P)}$ given by the set of vectors 
$L = \{g^{M_1},\dots, g^{M_k}, -g^{P_{k+1}}, \dots, -g^{P_t}\}$:
$$\mathcal{C}_{(M,P)}=\left\{\sum\limits_{i=1}\limits^{k} \alpha_ig^{M_i}-\sum\limits_{j=k+1}\limits^{t} \alpha_j g^{P_j} : \alpha_i \ge 0 \text{ for every $1\leq i\leq t$}\right\}.$$ 
The {\em polyhedral fan} of $A$ studied in \cite{DIJ} is the collection of cones $\mathcal{C}_{(M,P)}$. The $\tau$-tilting pairs yield the $n$-dimensional cones, which are separated by facets (cones of codimension one), which are given by the almost $\tau$-tilting pairs.
For any $\tau$-rigid pair $(M,P)$, the vector  $\alpha(M,P) = \sum\limits_{i=1}\limits^{k} \alpha_ig^{M_i}-\sum\limits_{j=k+1}\limits^{t} \alpha_j g^{P_j}$ in $\Co_{(M,P)}$ has uniquely determined non-negative coefficients $\alpha_i, \alpha_j$.
This yields a linear form on $\R^n$ defined by

$$\langle\alpha(M,P),[N])\rangle =\sum^{k}_{i=1}\alpha_i\emph{hom}(M_i,N)-\sum_{i=1}^k\alpha_i\emph{hom}(N,\tau M_i)-\sum_{j=k+1}^t\alpha_j\emph{hom}(P_j,N).$$

\subsection{From $\tau$-tilting pairs to chambers}\label{chamber-results}
We show in this subsection how the polyhedral cone defined by $g$-vectors can be embedded into the wall and chamber structure of a given algebra. 

\begin{lem}\label{lemtrace}
Let $(M,P)$ be a $\tau$-rigid pair and let $\alpha(M,P)\in\mathcal{C}_{(M,P)}$ be in the cone defined by $(M,P)$. 
Then for every $N\in \emph{mod} A$, the trace $tN$ of $ N$ in $\emph{Fac} M$ satisfies
$$\langle\alpha(M,P),[tN])\rangle\geq 0.$$
Moreover the inequality is strict if $tN \neq 0 $ and $\alpha(M,P)$ lies in the interior $\rC_{(M,P)}$.
\end{lem}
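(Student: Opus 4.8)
The plan is to exploit the homological formula for the linear form $\langle \alpha(M,P), [N]\rangle$ together with the characterization of the trace $tN$ from Lemma \ref{approx}. First I would recall that, by Lemma \ref{approx}, the trace $tN$ of $N$ in $\Fac M$ is exactly $\Im f$, where $f: M'\to N$ is the minimal right $\add M$-approximation of $N$; in particular $tN \in \Fac M$. Since the linear form is additive on the summands $M_i$ of $M$, it suffices to understand each term $\emph{hom}_A(M_i, tN)$, $\emph{hom}_A(tN, \tau M_i)$, and $\emph{hom}_A(P_j, tN)$ separately and show that their signed combination is non-negative.

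The key observation is that for a module $X \in \Fac M$ one has strong vanishing: since $tN \in \Fac M$ and $(M,P)$ is $\tau$-rigid, both $\emph{hom}_A(tN, \tau M_i) = 0$ and $\emph{hom}_A(P_j, tN) = 0$. Indeed, $\Fac M$ is contained in $^{\perp}(\tau M)$ for a $\tau$-rigid module $M$ (this is the standard fact that $M$ being $\tau$-rigid means $\Hom_A(M,\tau M)=0$, which lifts to $\Hom_A(X,\tau M)=0$ for all $X\in\Fac M$ via the closure of $^{\perp}(\tau M)$ under quotients), so the middle terms vanish; and $\emph{hom}_A(P,X)=0$ for $X\in\Fac M$ because the defining condition $\Hom_A(P,M)=0$ of a $\tau$-rigid pair passes to quotients of powers of $M$. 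Therefore the formula collapses to
$$\langle\alpha(M,P),[tN]\rangle = \sum_{i=1}^{k}\alpha_i\,\emph{hom}_A(M_i, tN),$$
which is a sum of non-negative coefficients $\alpha_i \ge 0$ times the non-negative integers $\emph{hom}_A(M_i, tN)$. This immediately gives $\langle\alpha(M,P),[tN]\rangle \ge 0$.

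For the strict inequality when $tN \neq 0$ and $\alpha(M,P)$ lies in the interior $\rC_{(M,P)}$, I would argue that at least one term is strictly positive. Since $\alpha(M,P)\in\rC_{(M,P)}$, every coefficient $\alpha_i > 0$, so it remains to show $\emph{hom}_A(M_i, tN) > 0$ for at least one $i$. This follows because $tN = \Im f$ is a non-zero module in $\Fac M$: there is an epimorphism from a power of $M$ onto $tN$, and composing with the inclusion of a suitable summand $M_i$ yields a non-zero morphism $M_i \to tN$, so $\emph{hom}_A(M_i, tN) \ge 1$ for some $i$. Hence the whole sum is strictly positive.

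I expect the main obstacle to be the careful justification of the two vanishing statements, namely $\Fac M \subseteq {}^{\perp}(\tau M)$ and $\emph{hom}_A(P, X)=0$ for $X\in\Fac M$. The second is direct from the definition of a $\tau$-rigid pair applied to quotients, but the first requires invoking that $^{\perp}(\tau M)$ is closed under quotients (equivalently, that $M$ $\tau$-rigid gives $\Hom_A(\Fac M, \tau M)=0$), a standard but essential property of $\tau$-rigid modules from \cite{AIR}. Once these vanishings are in place, the rest is a short, formal argument from the homological formula.
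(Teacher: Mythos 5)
Your proof is correct and follows essentially the same route as the paper: both reduce the formula of Corollary \ref{tauformula} to $\sum_i \alpha_i\,\emph{hom}_A(M_i,tN)$ by establishing the two vanishings $\emph{hom}_A(tN,\tau M)=0$ and $\emph{hom}_A(P,tN)=0$ via an epimorphism $M^l\to tN$, and then argue strictness identically. The only point worth making explicit is that the vanishing $\emph{hom}_A(P,tN)=0$ is not a formal consequence of $\Hom_A(P,M)=0$ "passing to quotients" --- it uses the projectivity of $P$ to lift a map $P\to tN$ through the epimorphism $M^l\to tN$, exactly as the paper does.
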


\begin{proof}

Consider the  short exact sequence 
$$0\rightarrow tN\rightarrow N\rightarrow N/tN\rightarrow 0$$
with $tN\in \Fac M$ and $N/tN\in M^{\perp}$.
Assuming there exists a non-zero morphism $f: tN \to \tau M$ from $tN$ to $\tau M$, we could compose $f$ with an epimorphism $p: M^l \to tN$ from some $M^l$ to $tN$ and create a non-zero element $g \in \Hom_A(M,\tau M)$, contradicting the assumption that $M$ is $\tau$-rigid. 
Consequently $hom(tN, \tau M)=0$.

Likewise, any morphism $f:P\ra tN$ factors by projectivity of $P$ through some  epimorphism $p: M^l\ra tN$. 
Therefore $f=0$ because $\Hom_A(P,M)=0$ and thus $\emph{hom}_A(P,tN)= 0$. 
This yields
$$\langle\alpha(M,P),[tN])\rangle =\sum^{k}_{i=1}\alpha_i\emph{hom}(M_i,tN)-\sum_{i=1}^k\alpha_i\emph{hom}(tN,\tau M_i)-\sum_{j=k+1}^t\alpha_j\emph{hom}(P_j,tN)$$
$$=\sum^{k}_{i=1}\alpha_i\emph{hom}(M_i,tN) \; \geq \; 0\; .$$
Moreover, $\alpha(M,P)$ lies in the interior $\rC_{(M,P)}$ precisely when all $\alpha_i >0$, and 
$tN \neq 0 $ means that some $\emph{hom}(M_i,tN)$ is non-zero, since $tN$ belongs to $\Fac M$. Thus the inequality is strict in this case.
\end{proof}

\begin{prop}\label{catsemistables}
Let $(M,P)$ be a $\tau$-rigid pair and let $\alpha(M,P)\in\rC_{(M,P)}$. 
Then $N$ is an $\alpha(M,P)$-semistable module if and only if $N\in M^{\perp} \cap {^\perp}(\tau M) \cap P^{\perp}$.
\end{prop}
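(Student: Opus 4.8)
The plan is to prove both implications directly from the explicit linear form attached to $\alpha(M,P)$, namely
$$\langle\alpha(M,P),[N]\rangle =\sum^{k}_{i=1}\alpha_i\hom_A(M_i,N)-\sum_{i=1}^k\alpha_i\hom_A(N,\tau M_i)-\sum_{j=k+1}^t\alpha_j\hom_A(P_j,N),$$
keeping in mind that all coefficients $\alpha_i,\alpha_j$ are \emph{strictly} positive, since $\alpha(M,P)$ is assumed to lie in the open cone $\rC_{(M,P)}$. The whole argument rests on this sign information together with Lemma \ref{lemtrace}.

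For the implication ``$\Leftarrow$'', I would assume $N\in M^{\perp}\cap{}^{\perp}(\tau M)\cap P^{\perp}$. Applying the formula to $N$ itself, each of the three groups of terms vanishes by the corresponding membership condition, so $\langle\alpha(M,P),[N]\rangle=0$ and $N$ is orthogonal to $\alpha(M,P)$. It then remains to check $\langle\alpha(M,P),[L]\rangle\le 0$ for every submodule $L\subseteq N$. Here the key observation is that $M^{\perp}$ is closed under submodules, because $\Hom_A(M,-)$ is left exact; hence $L\in M^{\perp}$ and the first sum $\sum_i\alpha_i\hom_A(M_i,L)$ vanishes. The two remaining sums carry a minus sign and are non-negative (products of positive $\alpha$'s with dimensions of Hom-spaces), so $\langle\alpha(M,P),[L]\rangle\le 0$, which is exactly semistability.

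For the implication ``$\Rightarrow$'', I would assume $N$ is $\alpha(M,P)$-semistable. The crucial step is to feed the trace $tN$ of $N$ in $\Fac M$ into Lemma \ref{lemtrace}: it yields $\langle\alpha(M,P),[tN]\rangle\ge 0$, with strict inequality whenever $tN\neq 0$, precisely because $\alpha(M,P)$ lies in the interior $\rC_{(M,P)}$. On the other hand $tN$ is a submodule of $N$, so semistability forces $\langle\alpha(M,P),[tN]\rangle\le 0$. The two inequalities together give $tN=0$, which means $N\in M^{\perp}$. Substituting $\hom_A(M_i,N)=0$ into the formula and invoking orthogonality $\langle\alpha(M,P),[N]\rangle=0$ expresses $0$ as the negative of a sum of non-negative terms; since every $\alpha_i,\alpha_j$ is positive, each summand must vanish, so $\hom_A(N,\tau M_i)=0$ and $\hom_A(P_j,N)=0$ for all $i,j$, i.e. $N\in{}^{\perp}(\tau M)\cap P^{\perp}$.

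The main obstacle, and the only genuinely non-formal point, is the ``$\Rightarrow$'' direction: one needs the strict positivity coming from the interior hypothesis so that $\langle\alpha(M,P),[tN]\rangle=0$ can occur only when $tN=0$. The trace is what bridges the submodule inequalities defining semistability with the torsion pair $(\Fac M,M^{\perp})$; without Lemma \ref{lemtrace} there would be no a priori reason for a semistable $N$ to be torsion-free for this particular torsion pair. The ``$\Leftarrow$'' direction is comparatively soft, the one point not to overlook being the left exactness of $\Hom_A(M,-)$, which is what makes $M^{\perp}$ closed under submodules.
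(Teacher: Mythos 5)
Your proof is correct and follows essentially the same route as the paper: the backward direction is the direct computation with Corollary \ref{tauformula} (using that $M^{\perp}$ is closed under submodules and that the remaining terms enter with a minus sign), and the forward direction hinges on applying Lemma \ref{lemtrace} to the trace $tN$ together with the strict positivity of the coefficients in the open cone $\rC_{(M,P)}$. The only cosmetic difference is that you do not bother to note $\hom_A(P,L)=0$ for submodules $L$, which is harmless since that term is already non-positive.
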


\begin{proof}
Suppose that $N\in M^{\perp} \cap {^\perp}(\tau M) \cap P^{\perp}$. 
Then $$\emph{hom}(M,N)=\emph{hom}(N,\tau_A M)=\emph{hom}(P,N)=0.$$ 
Therefore $$\langle\alpha(M,P),[N]\rangle=0$$ by Corollary \ref{tauformula}. 
Moreover, for every submodule $L$ of $N$ we have that $\emph{hom}(M,L)=0$ because $\emph{hom}(M,N)=0$. 
Likewise $\emph{hom}(P,L)=0$. 
Then $$\langle\alpha(M,P),[L]\rangle \leq -\sum_{i=1}^k\alpha_i\emph{hom}(L,\tau_A M_i)\leq 0.$$ 
Hence $N$ is an $\alpha(M,P)$-semistable module. 

Conversely, suppose that $N$ is an $\alpha(M,P)$-semistable module, thus $$\langle\alpha(M,P),[L]\rangle \leq 0$$
for every submodule $L$ of $N$. 
In particular $$\langle\alpha(M,P),[tN]\rangle \leq 0$$ where $tN$ is the trace of $N$ in $\Fac M$. 
But lemma \ref{lemtrace} implies $$\langle\alpha(M,P),[tN]\rangle > 0$$ when $\alpha(M,P)\in\rC_{(M,P)}$ and $tN$ is non-zero, thus we conclude  $tN=0$. 
This yields $N \cong N/tN \in M^{\perp}$.
Moreover, from 
$$0 = \langle\alpha(M,P),[N]\rangle =  \sum_{i=1}^k\alpha_i\emph{hom}(M_i,N)-\sum_{i=1}^k\alpha_i\emph{hom}(N,\tau M_i)-\sum_{j=k+1}^t\alpha_j\emph{hom}(P_j,N)$$
we conclude $\emph{hom}(N,\tau M)=\emph{hom}(P,N)=0$ since $\emph{hom}(M,N) = 0$ and all $\alpha_i,\alpha_j >0$. Therefore $N\in  M^{\perp} \cap {^\perp}(\tau M) \cap P^{\perp}$. 
\end{proof}

The previous result provides a geometric proof for the fact that $$M^{\perp} \cap {^\perp}(\tau M) \cap P^{\perp} = 0$$ when $(M,P)$ is a $\tau$-tilting pair: 
We know in this case that the spanning vectors of the cone $\Co_{(M,P)}$ form a basis for $\R^n$, and thus only the zero vector can be orthogonal to every vector $\alpha(M,P)\in\Co_{(M,P)}$.
In other words, there is no $\alpha(M,P)$-stable module. 
In general, it is shown in \cite{J} that the full subcategory $M^{\perp} \cap {^\perp}(\tau M) \cap P^{\perp}$ of $\mod A$ is equivalent to the category of modules $\mod C_{(M,P)}$ where the algebra $C_{(M,P)}$ is obtained as endomorphism ring of a module $M'$ over $A$ such that $(M \oplus M', P)$  is $\tau$-tilting, see Theorem \ref{tautiltred}.
This allows us to give in the following result the precise number of $\alpha(M,P)$-stable modules when $(M,P)$ is a $\tau$-rigid pair:

\begin{theorem}\label{stablemodcat}
Let $(M,P)$ be a $\tau$-rigid pair and let $\alpha(M,P)$ be in the interior $\rC_{(M,P)}$ of the cone associated with $(M,P)$. 
Then the category of $\alpha(M,P)$-semistable modules is equivalent to the module category of the algebra $C_{(M,P)}$. 
Moreover there are exactly $rk(K_0(A))-|M|-|P|$ nonisomorphic $\alpha(M,P)$-stable modules.
\end{theorem}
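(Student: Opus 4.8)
The plan is to leverage the two results that have already been established earlier in the excerpt, namely Proposition \ref{catsemistables} and Jasso's $\tau$-tilting reduction (Theorem \ref{tautiltred}), so that the equivalence of categories essentially falls out by combining them. First I would recall that by Proposition \ref{catsemistables}, when $\alpha(M,P)$ lies in the interior $\rC_{(M,P)}$, a module $N$ is $\alpha(M,P)$-semistable precisely when $N \in M^{\perp} \cap {^\perp}(\tau M) \cap P^{\perp}$. Thus the category of $\alpha(M,P)$-semistable modules is literally equal to the perpendicular category $M^{\perp} \cap {^\perp}(\tau M) \cap P^{\perp}$. Then I would invoke Theorem \ref{tautiltred}, which gives an equivalence of categories
$$F: M^{\perp} \cap {^\perp}(\tau M) \cap P^{\perp} \to \mod C_{(M,P)}.$$
Combining these two statements immediately yields that the category of $\alpha(M,P)$-semistable modules is equivalent to $\mod C_{(M,P)}$, which is the first assertion.

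For the second assertion, the count of $\alpha(M,P)$-stable modules, I would argue that under the equivalence $F$, the $\alpha(M,P)$-stable modules must correspond to the simple $C_{(M,P)}$-modules. The key point is that $\alpha(M,P)$-stable modules are exactly the simple objects of the wide subcategory of $\alpha(M,P)$-semistable modules: a $\theta$-semistable module $N$ is $\theta$-stable if and only if it has no proper nonzero semistable submodule with the same $\theta$-value, which is precisely the condition of being a simple object in the abelian category of semistable modules (this is standard from King \cite{Ki}; the category of $\theta$-semistable modules is abelian with the $\theta$-stable modules as its simple objects). Since $F$ is an equivalence of abelian categories, it sends simple objects to simple objects, so the $\alpha(M,P)$-stable modules are in bijection with the simple $C_{(M,P)}$-modules.

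It then remains to count the number of simple $C_{(M,P)}$-modules, which equals $rk(K_0(C_{(M,P)}))$, the number of vertices in the quiver of $C_{(M,P)}$. By construction, $C_{(M,P)}$ is obtained from $B_{(M,P)} = \End_A(M \oplus M')$, where $(M \oplus M', P)$ is a $\tau$-tilting pair, by quotienting out the idempotent $e_{(M,P)}$ associated to the summand $M$. Since $M \oplus M'$ has $|M| + |M'|$ indecomposable summands and $(M \oplus M', P)$ is $\tau$-tilting we have $|M| + |M'| + |P| = rk(K_0(A))$, so $|M'| = rk(K_0(A)) - |M| - |P|$. The algebra $B_{(M,P)}$ has $|M| + |M'|$ simple modules, and killing the idempotent corresponding to the $|M|$ summands of $M$ removes exactly those $|M|$ simples, leaving $|M'| = rk(K_0(A)) - |M| - |P|$ simple $C_{(M,P)}$-modules.

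The main obstacle I anticipate is the second assertion rather than the first: the identification of $\alpha(M,P)$-stable modules with simple objects in the semistable category requires some care, since one must check that the abelian structure on the semistable subcategory (as a wide subcategory of $\mod A$, see Proposition \ref{wide}) is compatible with the stability notion, so that $\theta$-stability translates exactly to simplicity. One must verify that a subobject in the wide subcategory of a semistable module is itself semistable and lands on the same hyperplane, which uses that all semistable modules have $\theta$-value zero while proper submodules have strictly negative value for stable modules. Once this bookkeeping is settled, the equivalence $F$ transports simplicity and the dimension count of $\mod C_{(M,P)}$ completes the argument.
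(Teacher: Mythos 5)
Your proposal is correct and follows essentially the same route as the paper: the equivalence is obtained by combining Proposition \ref{catsemistables} with Theorem \ref{tautiltred}, and the count follows from identifying $\alpha(M,P)$-stable modules with simple $C_{(M,P)}$-modules and computing $rk(K_0(C_{(M,P)}))=rk(K_0(A))-|M|-|P|$ from the construction of $C_{(M,P)}$. You actually supply more detail than the paper on why $F$ matches stable modules with simples (the paper merely asserts this bijection), and your justification of that step is sound.
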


\begin{proof}
The first part of the statement follows directly from proposition \ref{catsemistables} and Theorem \ref{tautiltred}. 
Note that $rk(K_0(C_{(M,P)}))=rk(K_0(A))-|M|-|P|$ by construction of $C_{(M,P)}$ as explained in section \ref{prelim}. 
Moreover, the functor $F$ of proposition \ref{tautiltred} induces a bijection between the isomorphism classes of $\alpha(M,P)$-stable modules and the isomorphism classes of simple $C_{(M,P)}$-modules.
Therefore the number of nonisomorphic $\alpha(M,P)$-stable modules coincides with the rank of $K_0(C_{(M,P)})$ the Grothendieck group of $\mod C_{(M,P)}$, finishing the proof.
\end{proof}

We can now use Theorem \ref{stablemodcat}  to relate $\tau$-tilting pairs to chambers:

\begin{prop}\label{chambers}
Let $A$ be a finite-dimensional $k$-algebra. 
Then the interior $\rC_{(M,P)}$ of the positive cone  associated with a $\tau$-tilting pair $(M,P)$ defines a chamber in the wall and chamber structure of $A$.
\end{prop}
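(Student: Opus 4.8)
The plan is to establish two things: that the open cone $\rC_{(M,P)}$ is contained in $\mathfrak{R}$, and that it is in fact an entire connected component of $\mathfrak{R}$. For the first, I would invoke Proposition \ref{catsemistables}: for $\alpha(M,P)\in\rC_{(M,P)}$ the category of $\alpha(M,P)$-semistable modules is $M^{\perp}\cap{}^{\perp}(\tau M)\cap P^{\perp}$. Since $(M,P)$ is $\tau$-tilting, the discussion following that proposition shows this category is zero; equivalently, Theorem \ref{stablemodcat} gives $rk(K_0(A))-|M|-|P|=0$ stable modules. Hence no point of $\rC_{(M,P)}$ admits a non-zero semistable module, so $\rC_{(M,P)}\subseteq\mathfrak{R}$. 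By Theorem \ref{LI} the spanning $g$-vectors form a basis of $\R^n$, so $\Co_{(M,P)}$ is a simplicial cone and its interior $\rC_{(M,P)}$ is a non-empty open convex set, in particular connected. Therefore $\rC_{(M,P)}$ is contained in a unique chamber $\mathfrak{C}$.

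It remains to prove the reverse inclusion $\mathfrak{C}\subseteq\rC_{(M,P)}$, and the key geometric input is that the relative boundary of the cone consists of walls. Each facet of $\Co_{(M,P)}$ is obtained by setting exactly one coefficient $\alpha_i=0$, which amounts to deleting one indecomposable summand; the resulting pair $(M',P')$ is an almost $\tau$-tilting pair with $|M'|+|P'|=n-1$. Applying Theorem \ref{stablemodcat} to $(M',P')$, I would conclude that on the relative interior $\rC_{(M',P')}$ of that facet there is exactly $n-(n-1)=1$ non-isomorphic $\alpha(M',P')$-stable module $N$. Thus the relative interior of each facet lies in $\D(N)$, a codimension-one stability space, and hence in $\bigcup_{M}\D(M)$. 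Taking closures, every closed facet, and therefore the whole relative boundary $\partial\Co_{(M,P)}$ (the union of the proper faces, each contained in the closure of a facet interior), lies in $\overline{\bigcup_{M}\D(M)}$.

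Finally I would run a connectedness argument inside $\mathfrak{C}$. The set $\rC_{(M,P)}$ is open in $\R^n$, hence open in $\mathfrak{C}$; I claim it is also closed in $\mathfrak{C}$. Indeed, any $x\in\mathfrak{C}$ lying in the closure of $\rC_{(M,P)}$ belongs to the closed simplicial cone $\Co_{(M,P)}=\overline{\rC_{(M,P)}}$; if $x$ were not in the open cone $\rC_{(M,P)}$ it would lie in $\partial\Co_{(M,P)}\subseteq\overline{\bigcup_{M}\D(M)}$, contradicting $\mathfrak{C}\cap\overline{\bigcup_{M}\D(M)}=\emptyset$. Hence $x\in\rC_{(M,P)}$, so $\rC_{(M,P)}$ is a non-empty subset of the connected set $\mathfrak{C}$ that is both open and closed in $\mathfrak{C}$, forcing $\rC_{(M,P)}=\mathfrak{C}$. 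I expect the main obstacle to be exactly this maximality step: containment in $\mathfrak{R}$ follows quickly from the earlier results, but proving $\rC_{(M,P)}$ is a \emph{whole} component requires the facet analysis above to guarantee that the cone cannot be enlarged inside $\mathfrak{R}$, together with care in passing from facet interiors to the entire relative boundary via closures.
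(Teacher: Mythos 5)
Your proposal is correct and follows essentially the same route as the paper: containment in a chamber via Proposition \ref{catsemistables} and connectedness of the open cone, then maximality by showing the boundary of $\Co_{(M,P)}$ is covered by stability spaces coming from the lower-dimensional $\tau$-rigid cones (the paper uses all proper faces directly, you use the facets of almost $\tau$-tilting pairs and pass to closures). Your explicit open-and-closed connectedness argument at the end is a slightly more careful rendering of the step the paper leaves implicit.
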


\begin{proof}
Let $(M,P)$ be a $\tau$-tilting pair and let $\alpha(M,P)\in\rC _{(M,P)}$. 
As we discussed before Theorem \ref{stablemodcat}, this implies that the category of $\alpha(M,P)$-semistable modules consist only of the zero object. 
Therefore $\alpha(M,P)$ belongs to a chamber $\mathfrak{C}$. 
Moreover, every vector in $\rC_{(M,P)}$ belongs to the same chamber $\mathfrak{C}$ because $\rC_{(M,P)}$ is connected. 
Thus $\rC_{(M,P)} \subset \mathfrak{C}$. 

For every non-zero vector $\beta(M,P)$ in the boundary $$\partial\Co_{(M,P)} = \Co_{(M,P)} \backslash \rC_{(M,P)} $$ of $\Co_{(M,P)}$ there exist indices $1\leq i\leq n$ such that $\beta_i=0$. 
That is, $\beta(M,P)$ lies in a smaller-dimensional cone defined by a $\tau$-rigid pair $(M',P')$ obtained from $(M,P)$ by removing summands corresponding to these indices. 
If every entry of $\beta(M,P)$ is zero, the category of $\beta(M,P)$-semistable modules coincides with $\mod A$.
Otherwise, some $\beta_j$ is non-zero.
In that case, the vector $\beta(M,P)$ lies in the interior of a suitable cone $\rC_{(M',P')}$.
Hence, Theorem \ref{stablemodcat} yields the existence of a $\beta(M,P)$-stable module.
This implies $\rC_{(M,P)} = \mathfrak{C}$. 
\end{proof} 

In view of the previous result, the interior of the positive cone $\Co_{(M,P)}$ of a $\tau$-tilting pair $(M,P)$ will be referred to as the chamber induced by $(M,P)$ and denoted by $\Ch_{(M,P)}:= \rC_{(M,P)}$.

Another immediate consequence of theorem \ref{stablemodcat} is the following.

\begin{cor}\label{coralmost}
Let $(M,P)$ be an almost $\tau$-tilting pair. 
Then $\Co_{(M,P)}$ has codimension 1 and is included in a wall of the wall and chamber structure of \emph{mod}$A$.
\end{cor}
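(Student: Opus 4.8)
The plan is to verify the two assertions in turn: the codimension claim is a dimension count, while the statement that $\Co_{(M,P)}$ sits inside a wall will rest on the explicit description of semistable modules in Proposition \ref{catsemistables} together with the count of stable modules in Theorem \ref{stablemodcat}. For the codimension, I would use that an almost $\tau$-tilting pair satisfies $|M|+|P|=n-1$, where $n=rk(K_0(A))$. By construction $\Co_{(M,P)}$ is spanned by the $n-1$ vectors $\{g^{M_1},\dots,g^{M_k},-g^{P_{k+1}},\dots,-g^{P_t}\}$, and these are linearly independent (this holds for any $\tau$-rigid pair, since such a pair completes to a $\tau$-tilting pair and one then invokes Theorem \ref{LI}). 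Hence $\Co_{(M,P)}$ spans an $(n-1)$-dimensional subspace of $\R^n$ and so has codimension one.

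For the second assertion, I would first exhibit a single module that is semistable on the entire cone. Fix any $\alpha(M,P)\in\rC_{(M,P)}$. By Theorem \ref{stablemodcat} there are exactly $n-|M|-|P|=1$ nonisomorphic $\alpha(M,P)$-stable modules; let $N$ be such a module, which is in particular nonzero and $\alpha(M,P)$-semistable. Proposition \ref{catsemistables} then places $N$ in $M^{\perp}\cap{}^{\perp}(\tau M)\cap P^{\perp}$. The decisive observation is that this membership condition makes no reference to the chosen interior vector, so the converse direction of Proposition \ref{catsemistables} shows that $N$ is $\beta$-semistable for \emph{every} $\beta\in\rC_{(M,P)}$. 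Therefore $\rC_{(M,P)}\subseteq\D(N)$.

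It then remains to pass to closures and to determine $\dim\D(N)$. Since $\D(N)$ is cut out by the hyperplane $\{\theta:\theta(N)=0\}$ together with the closed half-spaces $\{\theta:\theta(L)\leq 0\}$ ranging over submodules $L$ of $N$, it is closed; and because the generating $g$-vectors are linearly independent, $\Co_{(M,P)}$ is exactly the closure of $\rC_{(M,P)}$. Hence $\Co_{(M,P)}\subseteq\D(N)$. Finally, $N$ is nonzero, so $\{\theta:\theta(N)=0\}$ is a genuine hyperplane and $\dim\D(N)\leq n-1$; combined with the inclusion of the $(n-1)$-dimensional cone $\Co_{(M,P)}$, this forces $\dim\D(N)=n-1$, i.e. $\D(N)$ is a wall. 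Thus $\Co_{(M,P)}$ is contained in the wall $\D(N)$.

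The step I expect to demand the most care is the uniformity in $\alpha$, namely the claim that a single module $N$ witnesses semistability across the whole interior rather than only at the point used to invoke Theorem \ref{stablemodcat}. This is precisely what the $\alpha$-independent characterization of Proposition \ref{catsemistables} provides, so the apparent obstacle evaporates once that characterization is available; the subsequent closure and dimension arguments are then routine.
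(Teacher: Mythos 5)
Your proof is correct and follows essentially the same route as the paper: codimension one from the linear independence of the $n-1$ spanning $g$-vectors, and wall containment from the existence of the (unique) $\alpha(M,P)$-stable module given by Theorem \ref{stablemodcat}. You are in fact somewhat more careful than the paper's own two-line argument, since you make explicit via the $\alpha$-independent characterization of Proposition \ref{catsemistables} that a \emph{single} module $N$ is semistable across the whole interior, which is what justifies the conclusion that the cone lies in one wall $\D(N)$ rather than merely in the union of all walls.
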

\begin{proof} 
The cone $\Co_{(M,P)}$ of an almost $\tau$-tilting pair is formed by $n-1$ linearly independent vectors, thus it has codimension 1. 
Theorem \ref{stablemodcat} guarantees that every vector $\alpha(M,P)$ in the interior of $\Co_{(M,P)}$ admits an $\alpha(M,P)$-stable module, thus intersects no chamber and must be contained in a wall.
\end{proof}

We would like to point out that the facets $\Co_{(M,P)}$ of the polyhedral fan need not be walls in the wall and chamber structure: 
In general one wall $\D(N)$ can be made of more than one facet, see for example the wall $\D(S(2))$ in Figure \ref{tauw&cA_2}.
While the corollary above shows that the positive cones of almost $\tau$-tilting pairs are included in a wall $\D(N)$, it gives little information about the module $N$ defining that wall. 
A method to construct this module explicitly follows from the theory developed in \cite{AIR}:

\begin{prop}\label{walls}
Let $(M,P)$ be an almost $\tau$-tilting pair. 
Then $\Co_{(M,P)}$ is included in the wall $\D(N)$, where $N$ is constructed as follows: 
Let $(M_1,P_1)$ and $(M_2,P_2)$ be the two $\tau$-tilting pairs containing $M$ and $P$ as a direct factor. Order them such that $\emph{Fac} M_1 \subset \emph{Fac} M_2$, and write $M_2=M'\oplus M$ for some $\tau$-rigid module $M'$.
Then $N$ is the cokernel of the right \emph{add}$M$-approximation of $M'$. 
\end{prop}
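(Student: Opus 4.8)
The plan is to produce $N$ as an explicit quotient of $M'$ lying in the category $M^{\perp}\cap{}^{\perp}(\tau M)\cap P^{\perp}$, and then to read off $\Co_{(M,P)}\subseteq\D(N)$ from Proposition \ref{catsemistables}; the codimension-one claim will follow by a dimension squeeze, so that no stability (only semistability) is needed.

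First I would rewrite $N$. Recall from the discussion preceding Theorem \ref{tautiltred} that the completion with the larger torsion class, namely $M_2=M'\oplus M$, satisfies $\Fac M_2={}^{\perp}(\tau M)\cap P^{\perp}$. If $f\colon B\to M'$ denotes the minimal right $\add M$-approximation of $M'$, then Lemma \ref{approx} identifies $\Im f$ with the trace $tM'$ of $M'$ in $\Fac M$, so that $N=\coker f=M'/tM'$ is exactly the torsion-free part of $M'$ for the torsion pair $(\Fac M,M^{\perp})$. Note $N\neq 0$: were $N=0$ we would have $M'\in\Fac M$, whence $\Fac M_2=\Fac(M\oplus M')=\Fac M=\Fac M_1$, contradicting $\Fac M_1\subsetneq\Fac M_2$.

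Next I would verify $N\in M^{\perp}\cap{}^{\perp}(\tau M)\cap P^{\perp}$. Since $N$ is the torsion-free part of $M'$, we have $\Hom_A(M,N)=0$. As $M'$ is a direct summand of $M_2$, it lies in $\Fac M_2={}^{\perp}(\tau M)\cap P^{\perp}$, so $\Hom_A(M',\tau M)=0$ and $\Hom_A(P,M')=0$. Applying the left-exact functor $\Hom_A(-,\tau M)$ to the epimorphism $M'\twoheadrightarrow N$ embeds $\Hom_A(N,\tau M)$ into $\Hom_A(M',\tau M)=0$, so $\Hom_A(N,\tau M)=0$; and applying $\Hom_A(P,-)$ to the same epimorphism, with $P$ projective, exhibits $\Hom_A(P,N)$ as a quotient of $\Hom_A(P,M')=0$, so $\Hom_A(P,N)=0$. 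Hence $N$ lies in the asserted category.

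Finally I would conclude. By Proposition \ref{catsemistables}, every $\alpha(M,P)$ in the interior $\rC_{(M,P)}$ makes $N$ semistable, so $\rC_{(M,P)}\subseteq\D(N)$; since $\D(N)$ is closed and $\Co_{(M,P)}=\overline{\rC_{(M,P)}}$, this yields $\Co_{(M,P)}\subseteq\D(N)$. Moreover $\Co_{(M,P)}\subseteq\D(N)\subseteq\{\theta:\langle\theta,[N]\rangle=0\}$, where the left-hand cone has dimension $n-1$ by Corollary \ref{coralmost} and the right-hand hyperplane has dimension $n-1$ because $[N]\neq0$; thus $\D(N)$ has codimension one and is a wall. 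The only delicate point is the middle step: identifying $N$ with the torsion-free reflection $M'/tM'$ through Lemma \ref{approx} and, above all, locating the distinguished summand $M'$ inside ${}^{\perp}(\tau M)\cap P^{\perp}$. Everything else is formal; in particular the finer count of stable modules in Theorem \ref{stablemodcat} is not required, since we only use that $N$ is semistable on $\rC_{(M,P)}$.
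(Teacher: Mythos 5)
Your argument is correct and follows essentially the same route as the paper's proof: realize $N=M'/tM'$ via Lemma \ref{approx}, check $N\in M^{\perp}\cap{}^{\perp}(\tau M)\cap P^{\perp}$ by the same projectivity/left-exactness arguments, and invoke Proposition \ref{catsemistables} to place the cone inside $\D(N)$. Your version is in fact slightly more careful than the paper's on the minor points it leaves implicit (that $N\neq 0$, the passage from $\rC_{(M,P)}$ to its closure $\Co_{(M,P)}$, and the dimension squeeze showing $\D(N)$ has codimension exactly one), and you correctly observe that semistability of $N$ suffices where the paper asserts stability.
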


\begin{proof}
Recall from section \ref{prelim} that  there exists an indecomposable $\tau$-rigid module $M'$ such that $M_2=M'\oplus M$. 
By Lemma \ref{approx}, the short exact sequence induced by the right add $M$-approximation is as follows, where $tM'$ is the trace of $M'$ in Fac $M$:
$$0\rightarrow tM'\overset{i}\rightarrow M'\overset{p}\rightarrow N\rightarrow 0$$
By the properties of torsion pairs we have Hom$_A(M,N)=0$. 
On the other hand, $\Hom_A(P,M')=\Hom_A(M',\tau M)=0$ because $(M\oplus M',P)$ is a $\tau$-tilting pair. 
The projectivity of $P$ implies that $\Hom_A(P,N)=0$. 
Moreover, any morphism $f:N\ra \tau M$ induces a morphism $fp:M'\ra \tau M$, hence $\Hom_A(N,\tau M)=0$.

Since $N$ belongs to $ M^{\perp} \cap {^\perp}(\tau M) \cap P^{\perp}$, we know that $N$ is an $\alpha(M,P)$-stable module. 
Hence the positive cone induced by the  almost $\tau$-tilting pair $(M,P)$ is included in the stability space $\D(N)$. 
Moreover, the positive cone has codimension 1, implying that $\D(N)$ is actually a wall.
\end{proof}

As a corollary, we get the following result, which completely describes  the chamber induced by a $\tau$-tilting pair $(M,P)$.

\begin{cor}\label{walls&chambers}
Let $(M,P)$ be a $\tau$-tilting pair. 
Then $(M,P)$ induces a chamber $\Ch_{(M,P)}$ having exactly $n$ walls $\{\D(N_1), \dots, \D(N_n)\}$. 
The stable modules $\{N_1, \dots, N_n\}$ can be obtained from the $n$ almost $\tau$-tilting pairs contained in $(M,P)$ as described in Proposition \ref{walls}.
\end{cor}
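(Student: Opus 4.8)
The plan is to identify the walls of the chamber $\Ch_{(M,P)}$ with the facets of the polyhedral cone $\Co_{(M,P)}$ and then count them combinatorially. By Proposition \ref{chambers} the chamber $\Ch_{(M,P)}$ coincides with the open cone $\rC_{(M,P)}$, and by Theorem \ref{LI} the $g$-vectors $\{g^{M_1}, \dots, g^{M_k}, -g^{P_{k+1}}, \dots, -g^{P_n}\}$ spanning $\Co_{(M,P)}$ form a basis of $\R^n$. Hence $\Co_{(M,P)}$ is a full-dimensional simplicial cone, and its topological boundary (which is the boundary of the chamber) is the union of its $n$ facets, each obtained by setting exactly one of the coefficients equal to zero.

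First I would set up the correspondence between these facets and the almost $\tau$-tilting pairs contained in $(M,P)$. Setting the $i$-th coefficient equal to zero amounts to deleting the corresponding indecomposable summand of $M$ or $P$, producing an almost $\tau$-tilting pair $(M^{(i)}, P^{(i)})$ whose cone $\Co_{(M^{(i)}, P^{(i)})}$ is precisely that facet. Since $|M| + |P| = n$, there are exactly $n$ such deletions, so there are exactly $n$ facets. To each almost $\tau$-tilting pair I would then apply Proposition \ref{walls}, which produces a module $N_i$ (the cokernel of the relevant right $\add$-approximation) such that the facet $\Co_{(M^{(i)}, P^{(i)})}$ is contained in the wall $\D(N_i)$.

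The main obstacle will be to argue that these $n$ walls are genuinely distinct and that they exhaust the walls bounding the chamber. Distinctness should follow from the simplicial structure: the $n$ facets of $\Co_{(M,P)}$ lie in $n$ distinct hyperplanes, because any two distinct facets are spanned by two different $(n-1)$-element subsets of the $n$ basis vectors, whose union spans all of $\R^n$ and therefore cannot lie in a common hyperplane. As each $N_i$ is orthogonal to the hyperplane containing its facet, the stability spaces $\D(N_i)$ lie in these $n$ distinct hyperplanes and are thus pairwise distinct. Finally, since the chamber equals the open cone $\rC_{(M,P)}$, its closure is $\Co_{(M,P)}$ and its boundary is exactly $\partial\Co_{(M,P)}$; any wall bounding the chamber must therefore meet this boundary in codimension one, and so must contain one of the $n$ facets. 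This shows there are no further walls and completes the count of exactly $n$ walls $\D(N_1), \dots, \D(N_n)$.
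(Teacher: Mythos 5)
Your proof is correct and follows essentially the same route as the paper's: Proposition \ref{chambers} identifies the chamber with the open cone, Proposition \ref{walls} places each of the $n$ facets (one per almost $\tau$-tilting summand of $(M,P)$) inside a wall $\D(N_i)$, and distinctness of the $\D(N_i)$ comes from the linear independence of the $g$-vectors (Theorem \ref{LI}). The only difference is that you additionally spell out why no further walls can bound the chamber, a point the paper's proof leaves implicit.
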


\begin{proof}
Suppose that $M=\bigoplus_{i=1}^k M_i$ and $P=\bigoplus_{j=k+1}^nP_j$. 
The fact that $(M,P)$ induces a chamber follows from Proposition~\ref{chambers}. 
Moreover, we know from Proposition \ref{walls} that each of the $n$ almost $\tau$-tilting pairs that are direct summand of $(M,P)$ are included in a wall.  
Finally, the fact that the $\D(N_i)$ are pairwise distinct follows from the fact that the set of $g$-vectors $\{g^{M_1}, \dots, g^{M_k}, -g^{P_{k+1}}, \dots, -g^{P_{t}}\}$ forms a basis, as shown in \cite[Theorem 5.1]{AIR}.
\end{proof}

\begin{rmk}
Not every wall is generated by the positive cone of some almost $\tau$-tilting pair. 
For instance, take the hereditary algebra of the Kronecker quiver $\xymatrix{1\ar@<0.5 ex>[r]\ar@<-0.5 ex>[r]&2}$ and consider the wall $\mathfrak{D}\left(\rep{1\\2}\right)=\{\lambda(1,-1):\lambda>0\}$. 
In this case there is no $\tau$-rigid module $M$ such that $g^M=(1,-1)$.
Note that the wall $\mathfrak{D}$ has no adjacent chamber, but rather is a limit of walls given by preprojective (or preinjective) modules.

Also the number of positive cones defined by almost $\tau$-tilting pairs included in a given wall is not constant. 
For instance $\D\left(\rep{1\\22}\right)=\left\{\lambda\left(g^{\rep{11\\222}}\right): \lambda\in\mathbb{R}_{\geq 0}\right\}$ while $\D(\rep{2})=\left\{\lambda\left(g^{\rep{1\\22}}\right): \lambda\in\mathbb{R}_{\geq 0}\right\}\cup\left\{\lambda\left(-g^{\rep{1\\22}}\right): \lambda\in\mathbb{R}_{\geq 0}\right\}$
\end{rmk}

The next example is intended to be an illustration of our previous propositions.

\begin{ex}
Consider the wall and chamber structure of the algebra $\mathbb{A}_2$ as we did in Example \ref{runningexample}. 
In Figure \ref{tauw&cA_2} we can see how positive cones of $\tau$-tilting pairs coincide with chambers and how positive cones of almost $\tau$-tilting pairs are included in walls of the wall and chamber structure of $\mathbb{A}_2$.

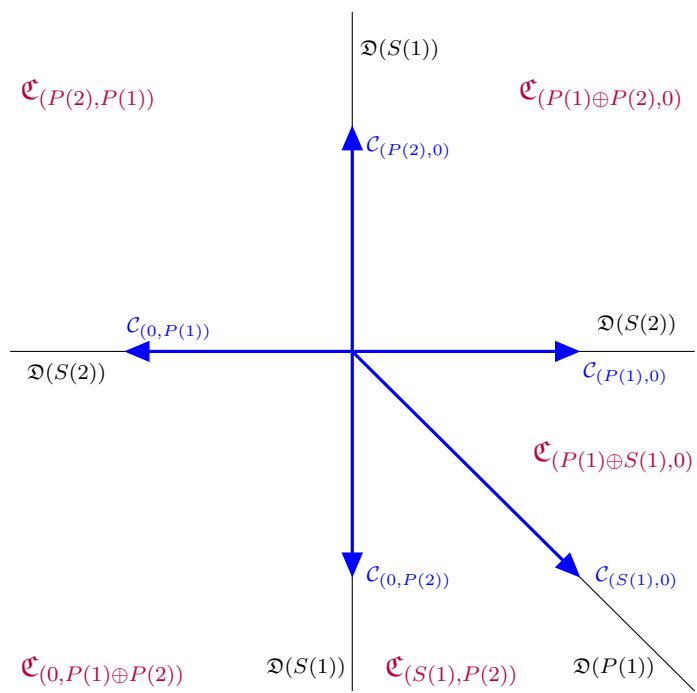
\begin{figure}
			\begin{center}
				\begin{tikzpicture}[line cap=round,line join=round,>=triangle 45,x=3.0cm,y=3.0cm]
					\clip(-1.5,-1.5) rectangle (1.5,1.5);
						\draw (0.,0.) -- (0.,1.5);
						\draw (0.,0.) -- (0.,-1.5);
						\draw [domain=-1.5:0.0] plot(\x,{(-0.-0.*\x)/-1.});
						\draw [domain=0.0:1.5] plot(\x,{(-0.-1.*\x)/1.});
						\draw [domain=0.0:1.5] plot(\x,{(-0.-0.*\x)/1.});
						\draw [->,line width=1.2pt,color=blue] (0.,0.) -- (1.,0.);
						\draw [->,line width=1.2pt,color=blue] (0.,0.) -- (0.,1.);
						\draw [->,line width=1.2pt,color=blue] (0.,0.) -- (-1.,0.);
						\draw [->,line width=1.2pt,color=blue] (0.,0.) -- (0.,-1.);
						\draw [->,line width=1.2pt,color=blue] (0.,0.) -- (1.,-1.);
						\draw[color=purple] (1.5,1.25) node[anchor=north east] {$\Ch_{(P(1)\oplus P(2),0)}$};
						\draw[color=purple] (-1.5,1.25) node[anchor=north west] {$\Ch_{(P(2),P(1))}$};
						\draw[color=purple] (1.55,-0.35) node[anchor=north east] {$\Ch_{(P(1)\oplus S(1), 0)}$};
						\draw[color=purple] (0.1,-1.3) node[anchor=north west] {$\Ch_{(S(1),P(2))}$};
						\draw[color=purple] (-1.5,-1.3) node[anchor=north west] {$\Ch_{(0,P(1)\oplus P(2))}$};
					\begin{scriptsize}
						\draw[color=black] (0,1.25) node[anchor= south west] {$\mathfrak{D}(S(1))$};
						\draw[color=black] (-0.2,-1.4) node {$\mathfrak{D}(S(1))$};
						\draw[color=black] (-1.25,-0.1) node {$\mathfrak{D}(S(2))$};
						\draw[color=black] (1.15,-1.4) node {$\mathfrak{D}(P(1))$};
						\draw[color=black] (1.25,0.12) node {$\mathfrak{D}(S(2))$};
						\draw[color=blue] (1.2,-0.1) node {$\Co_{(P(1),0)}$};
						\draw[color=blue] (0.25,0.9) node {$\Co_{(P(2),0)}$};
						\draw[color=blue] (-0.8,0.1) node {$\Co_{(0,P(1))}$};
						\draw[color=blue] (0.25,-1.0) node {$\Co_{(0,P(2))}$};
						\draw[color=blue] (1.25,-1.0) node {$\Co_{(S(1),0)}$};
					\end{scriptsize}
				\end{tikzpicture}
			\end{center}
			\caption{Wall and chamber structure for $\mathbb{A}_2$}
            \label{tauw&cA_2}
		\end{figure}
\end{ex}

 The previous example is a particular case of a more general phenomenon. Following the terminology introduced in \cite{DIJ}, we say that an algebra $A$ is {\em $\tau$-tilting finite} if there are only finitely many $\tau$-tilting pairs in $\mod A$. In \cite[Theorem 5.4]{DIJ}, Demonet, Iyama and Jasso show that the simplicial complex of a $\tau$-tilting finite algebra is homeomorphic to the $(n-1)$-dimensional sphere. Therefore we have the following corollary from the results of this subsection.

 \begin{cor}
 Let $A$ be a $\tau$-tilting finite algebra. Then the $g$-vectors of indecomposable $\tau$-rigid pairs determine completely the wall and chamber structure of $A$. 
 \end{cor}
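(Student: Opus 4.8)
The plan is to combine the two structural results already established in this subsection with the topological input cited immediately before the corollary. The final statement asserts that for a $\tau$-tilting finite algebra $A$, the $g$-vectors of indecomposable $\tau$-rigid pairs determine the entire wall and chamber structure. The strategy is to show that the polyhedral fan built from these $g$-vectors already exhausts $\R^n$, so that no wall or chamber can lie outside the fan.

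First I would invoke Proposition \ref{chambers} together with Theorem \ref{introwalls} (proved as the combination of Propositions \ref{chambers} and \ref{walls}) to record that every $\tau$-tilting pair $(M,P)$ produces a chamber $\Ch_{(M,P)}=\rC_{(M,P)}$, and that the facets of these cones, coming from almost $\tau$-tilting pairs, sit inside walls via Proposition \ref{walls}. Next I would bring in the cited result \cite[Theorem 5.4]{DIJ}: since $A$ is $\tau$-tilting finite, there are only finitely many $\tau$-tilting pairs, and the associated simplicial complex of $\tau$-rigid pairs is homeomorphic to the $(n-1)$-sphere $S^{n-1}$. The key consequence I want to extract is that the cones $\Co_{(M,P)}$ over all $\tau$-tilting pairs $(M,P)$ cover all of $\R^n$: taking the cone over a triangulation of $S^{n-1}$ fills the whole ambient space, with the maximal cones meeting exactly along the facets indexed by almost $\tau$-tilting pairs.

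Given this covering, I would argue that the wall and chamber structure is completely accounted for. Every point of $\R^n$ either lies in the interior $\rC_{(M,P)}$ of some top-dimensional cone, hence in the chamber $\Ch_{(M,P)}$ by Proposition \ref{chambers}, or on the boundary shared by adjacent cones. Since the fan covers $\R^n$ and the top cones are genuine chambers, the complement $\mathfrak{R}$ of the closure of all walls is precisely the union of the $\rC_{(M,P)}$, so the chambers are exactly the chambers induced by $\tau$-tilting pairs; this is the surjectivity asserted in Theorem \ref{introwalls} for the $\tau$-tilting finite case. Dually, the walls are covered by the facets of the fan, each of which lies in a wall $\D(N)$ with $N$ the explicitly constructed stable module of Proposition \ref{walls}. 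Thus both the chambers and their bounding walls are read off from the $g$-vectors of indecomposable $\tau$-rigid pairs, which is the claim.

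The main obstacle I expect is the step translating the homeomorphism $S^{n-1}$ of \cite[Theorem 5.4]{DIJ} into the statement that the cones actually cover $\R^n$ with no gaps, and in particular that a point on the topological boundary between two maximal cones lies on a facet indexed by a \emph{single} almost $\tau$-tilting pair rather than in some uncontrolled lower-dimensional stratum. Making this precise requires that the simplicial complex structure match the cone adjacency from Theorem \ref{mutation} (mutation), so that facets correspond bijectively to almost $\tau$-tilting pairs and the fan is a genuine (complete) fan. Once completeness of the fan is granted, the remaining verifications—that interiors give chambers and facets give walls—are immediate from the propositions already proved, so the real content lies in this covering/completeness argument.
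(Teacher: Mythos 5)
Your proposal is correct and follows essentially the same route as the paper, which states this corollary without a separate proof precisely because it is the combination you describe: the sphere homeomorphism of \cite[Theorem 5.4]{DIJ} makes the fan of cones $\Co_{(M,P)}$ complete, and Propositions \ref{chambers} and \ref{walls} identify the interiors of the maximal cones with chambers and the facets with subsets of walls. Your added caution about matching facets of the fan to almost $\tau$-tilting pairs is exactly the content of Theorem \ref{mutation} together with Corollary \ref{coralmost}, so nothing further is needed.
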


We conjecture that the result of the corollary holds true for more general cases, maybe all finite-dimensional algebras.
It holds for hereditary algebras of tame type,  since the union of the chambers coming from silting objects has full measure, see \cite[Theorem 5.1(4)]{Hil}.
Moreover, the work of \cite{BHIT} might  extend the conjecture to cluster-tilted algebras of tame type. 

Note that the union of the chambers coming from silting objects has full measure precisely when the closure of the set of walls has zero measure.
 This leads to the following conjecture (which holds for tame quivers by Hille's result \cite{Hil}):

\begin{conj} If an algebra is tame, then the closure of the union of all walls has measure zero.
\end{conj}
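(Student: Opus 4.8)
The plan is to separate the measure-theoretic content from the representation-theoretic content. For a nonzero vector $\beta\in\R^{n}$ write $\beta^{\perp}=\{\theta\in\R^{n}:\langle\theta,\beta\rangle=0\}$ for the hyperplane orthogonal to $\beta$. Every wall $\D(M)$ is contained in $[M]^{\perp}$, and on the relative interior of a wall there is a $\theta$-stable module whose class is orthogonal to that wall; since every $\theta$-stable module is a brick (see \cite{Ki,Ru}), the closure of the union $U=\bigcup_{M}\D(M)$ of all walls satisfies
\[
\overline{U}\ \subseteq\ \overline{\bigcup_{\beta\in\mathcal B}\beta^{\perp}},
\]
where $\mathcal B\subseteq\N^{n}$ denotes the (countable) set of dimension vectors of bricks of $\mod A$. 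In particular $U$ itself is a countable union of hyperplanes and already has measure zero, so the whole difficulty is concentrated in the closure.

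First I would reduce the closure to a statement about \emph{directions}. Set $D=\{\beta/\|\beta\|:\beta\in\mathcal B\}\subseteq S^{n-1}$. A limit-point argument shows
\[
\overline{U}\ \subseteq\ \bigcup_{v\in\overline{D}}v^{\perp}:
\]
if $\theta_{k}\to\theta$ with $\theta_{k}\in\beta_{k}^{\perp}$ and $\beta_{k}\in\mathcal B$, then by compactness of $S^{n-1}$ a subsequence of $\beta_{k}/\|\beta_{k}\|$ converges to some $v\in\overline{D}$, and passing to the limit in $\langle\theta_{k},\beta_{k}/\|\beta_{k}\|\rangle=0$ gives $\theta\in v^{\perp}$. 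Now $\bigcup_{v\in\overline{D}}v^{\perp}$ has Lebesgue measure zero as soon as $\overline{D}$ is \emph{countable}, since it is then a countable union of hyperplanes. As $D$ is already countable, the conjecture is thereby reduced to the purely structural assertion that the accumulation points of the set of brick directions form a countable (indeed, I expect finite) subset of $S^{n-1}$.

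The tameness hypothesis should enter exactly here, with the tame hereditary case of \cite{Hil} as a template: there the brick dimension vectors are the real Schur roots, which are discrete, together with the regular bricks in the tubes, whose directions accumulate only at the finitely many null-root directions; hence $\overline{D}$ is countable and Hille's full-measure statement follows. For an arbitrary tame algebra I would use the theory of one-parameter families and Crawley-Boevey's generic modules: in each dimension all but finitely many indecomposables lie in finitely many one-parameter families, each family consisting of modules of a single fixed dimension vector, and the normalized dimension vectors of these families should converge, as the dimension grows, to the directions of the finitely many generic modules of $A$. Sorting the bricks accordingly into a discrete part and the contributions of the families, every accumulation point of $D$ would be the direction of a generic module, of which tameness permits only finitely many; this would give $\overline{D}$ countable and finish the proof.

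The main obstacle is precisely this last step. One must (i) rule out accumulation of directions among the exceptional (non-family) bricks of unbounded dimension, that is, exclude any \emph{wild-like} dense accumulation of brick directions, which is exactly the phenomenon producing dense walls in the wild case; and (ii) prove that the directions attached to the one-parameter families genuinely converge to the finitely many generic directions. Step (i) seems the delicate point: it requires that tameness, via the bound of at most one parameter, forces the dimension vectors of bricks to spread out along only finitely many asymptotic rays. Making this rigorous through Crawley-Boevey's classification of tame algebras and the finiteness of generic modules is, I expect, where the real work lies, and it is plausible that the statement genuinely requires the full strength of the tameness hypothesis rather than a soft argument.
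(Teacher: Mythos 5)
First, a point of context: this statement is an open \emph{conjecture} in the paper. The authors give no proof; they only observe that it holds for tame quivers as a consequence of Hille's theorem that the chambers coming from silting objects have full measure. So there is no argument of theirs to compare yours against, and the only question is whether your proposal actually closes the gap. It does not, and you say so yourself: after the (correct and genuinely useful) reductions --- that $U=\bigcup_M\D(M)$ is contained in the union of hyperplanes $\beta^\perp$ over brick dimension vectors $\beta$, and that $\overline{U}\subseteq\bigcup_{v\in\overline{D}}v^\perp$ by the compactness argument --- the entire content of the conjecture has been pushed into the unproven assertion that $\overline{D}$, the set of accumulation points of normalized brick directions, is countable. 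That assertion is exactly the hard representation-theoretic input, and no proof of it is offered.

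More seriously, the sufficient condition you reduce to is very likely \emph{false} for tame non-domestic algebras, so the approach as stated would fail rather than merely remain incomplete. For a tubular algebra the quasi-simple bricks in the homogeneous tubes of slope $q$ have dimension vectors proportional to $e\,h_0+d\,h_\infty$ with $q=d/e$, and as $q$ ranges over the positive rationals their normalized directions are dense in an arc of $S^{n-1}$; hence $\overline{D}$ is uncountable. A one-parameter family of hyperplanes $\bigcup_{v\in C}v^\perp$ with $C$ an arc can have positive Lebesgue measure in $\R^n$ (already for $n=3$ it sweeps out a solid wedge), so the bound $\overline{U}\subseteq\bigcup_{v\in\overline{D}}v^\perp$ gives nothing in that regime: one would have to work with the actual walls $\D(M)$, which are typically proper subcones of $[M]^\perp$, and that is where the real geometry of the problem lives. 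Relatedly, your expected finiteness of generic modules for a tame algebra is wrong (a tubular algebra has one generic module per slope, hence infinitely many), so the proposed endgame via one-parameter families needs rethinking even in outline. What survives is worthwhile but partial: $U$ itself has measure zero, and your countability criterion would plausibly dispose of the domestic case, where brick directions accumulate at only finitely many rays.
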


Since some wild algebras may have the same property, this raises the question of what it means for an algebra to be $\tau-$tilting tame.
We suggest the following more general definition:

\begin{defi} We say that an algebra is $\tau$-tilting tame if the closure of the union of all walls has measure zero.
\end{defi}
It would be interesting to characterize these algebras algebraically.

\subsection{Torsion classes associated to chambers}\label{ssc:torsionchambers}

As we proved in the last subsection, every $\tau$-tilting pair $(M,P)$ defines a chamber $\Ch_{(M,P)}$ in the wall and chamber structure. 
In this subsection we associate to a given chamber $\Ch$ a torsion class $\T_{\Ch}$ and we show that $\T_{\Ch_{(M,P)}}=\Fac M$ if the chamber $\Ch_{(M,P)}$ is defined by a $\tau$-tilting pair $(M,P)$.

Recall that Bridgeland associated in \cite[Lemma 6.6]{B16} a torsion class $\T_{\theta}$ and a torsion free class $\F_\theta$ to every $\theta \in \R^n$ as follows.
$$\T_{\theta}=\{M\in \mod A:\langle \theta,N\rangle \geq 0 \text{ for every quotient $N$ of $M$}\}$$
$$\F_{\theta}=\{M\in \mod A:\langle \theta,L\rangle \leq 0 \text{ for every submodule $L$ of $M$}\}$$

As a direct consequence of these definitions is the following result.

\begin{prop}\label{wide}
Let $A$ be an algebra and $\theta \in \mathbb{R}^n$. 
Then the category of $\theta$-semistable modules is a wide subcategory of $\emph{mod} A$, \emph{i.e.}, is closed under kernels, cokernels and extensions.
\end{prop}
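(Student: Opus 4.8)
The plan is to reduce everything to the single observation that the linear form $\theta(-) = \langle \theta, [-]\rangle$ is additive on short exact sequences: if $0 \to X \to Y \to Z \to 0$ is exact then $[Y] = [X] + [Z]$ in $K_0(A)$, hence $\theta(Y) = \theta(X) + \theta(Z)$. I write $\mathcal{W}$ for the full subcategory of $\mod A$ whose objects are the zero module together with the $\theta$-semistable modules; recall that $M \in \mathcal{W}$ means $\theta(M) = 0$ and $\theta(L) \le 0$ for every submodule $L$ of $M$. (Equivalently, unwinding the definitions just given, $\mathcal{W}$ is exactly the intersection $\T_\theta \cap \F_\theta$, which is the point of view suggested by the placement of this proposition.) I then verify closure under extensions, kernels and cokernels separately.

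For extensions, take an exact sequence $0 \to X \to Y \to Z \to 0$ with $X, Z \in \mathcal{W}$. Additivity gives $\theta(Y) = 0$ immediately. For a submodule $L$ of $Y$ I would use the induced exact sequence $0 \to L \cap X \to L \to L/(L \cap X) \to 0$, in which $L \cap X$ is a submodule of $X$ and $L/(L \cap X)$ is isomorphic to a submodule of $Z$. Semistability of $X$ and $Z$ then gives $\theta(L \cap X) \le 0$ and $\theta(L/(L \cap X)) \le 0$, so additivity yields $\theta(L) \le 0$, proving $Y \in \mathcal{W}$.

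For kernels and cokernels, let $f \colon X \to Y$ be a morphism between objects of $\mathcal{W}$, and set $K = \ker f$, $I = \im f$, $C = \coker f$, which sit in the exact sequences $0 \to K \to X \to I \to 0$ and $0 \to I \to Y \to C \to 0$. The first step is to show these three objects are orthogonal to $\theta$: since $K$ is a submodule of $X$ and $I$ a submodule of $Y$ we have $\theta(K) \le 0$ and $\theta(I) \le 0$, while $\theta(K) + \theta(I) = \theta(X) = 0$ forces $\theta(K) = \theta(I) = 0$, and then $\theta(C) = \theta(Y) - \theta(I) = 0$. The submodule inequality is automatic for $K$ and $I$, because any submodule of $K$, or of $I$, is also a submodule of $X$, or of $Y$, respectively, where semistability applies. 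For $C$ I would pass to preimages: a submodule $L$ of $C$ lifts along the surjection $\pi \colon Y \to C$ to $L' = \pi^{-1}(L)$, a submodule of $Y$ containing $I$ with $\theta(L) = \theta(L') - \theta(I) = \theta(L') \le 0$. Hence $K, I, C \in \mathcal{W}$, and $\mathcal{W}$ is wide.

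The argument is little more than bookkeeping with additivity, so I do not anticipate a genuine obstacle. The one point that deserves attention is the orthogonality of $\ker f$ and $\im f$: this is not part of the hypotheses but must be deduced, and the mechanism is precisely that two a priori inequalities $\theta(K) \le 0$ and $\theta(I) \le 0$ summing to zero are forced to be equalities. Everything else --- the intersection argument for extensions and the preimage argument for the cokernel --- is a routine transfer of the submodule condition across the relevant short exact sequence.
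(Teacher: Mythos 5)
Your proof is correct and takes essentially the same approach as the paper's: both arguments rest on the additivity of $\theta$ over short exact sequences, force $\theta$ of the image of $f$ to vanish via a two-sided inequality, and then transfer the submodule condition to the kernel and (by preimages) the cokernel. The only cosmetic difference is that you verify closure under extensions directly with the $L\cap X$ decomposition, where the paper instead identifies the semistables with $\T_\theta\cap\F_\theta$ and invokes that each of these classes is closed under extensions.
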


\begin{proof}
Let $\theta \in \mathbb{R}^n$. 
Then the linearity of the dot product implies that  
$$\T_\theta \cap \F_\theta = \{M \in \mod A : \langle \theta , M \rangle = 0 \text{ and } \langle \theta , L \rangle \leq 0 \text{ for every submodule $L$ of $M$}\}.$$ 
By definition \ref{stable}, we have that $\T_\theta \cap \F_\theta$ is the category of $\theta$-semistable modules. 
Hence the category of $\theta$-semistable modules is closed under extensions because both $\T_\theta$ and $\F_\theta$ are closed under extensions.

Now we prove that its is closed under kernels and cokernels.
Let $M_1$ and $M_2$ two $\theta$-semistable modules and let $f:M_1 \to M_2$ be a morphism of $A$-modules.
If $f$ is zero or an isomorphism, the result follows at once.  
Otherwise, consider the following short exact sequences in $\mod A$
$$ 0 \to \ker f \to M_1 \to \im f \to 0$$
$$ 0 \to \im f \to M_2 \to \coker f \to 0.$$
Given that $M_1$ is $\theta$-semistable we have that $\langle \theta , \im f\rangle \geq \langle \theta , \im f\rangle = 0$. 
Meanwhile, $\langle \theta , \im f \rangle \leq \langle \theta , M_2\rangle = 0$ because $M_2$ is $\theta$-semistable.
Consequently $\langle \theta , \im f\rangle =0$. 
The linearity of the function $\langle \theta , - \rangle: K_0(A) \otimes \mathbb{R}^n \to \mathbb{R}$ yields $\langle \theta , \ker f \rangle =0$ and $\langle \theta , \coker f \rangle=0$. 

Moreover, every submodule $L$ of $\ker f$ is a submodule of $M_1$, thus $\langle \theta , L \rangle \le \langle \theta , M_1 \rangle = 0$. 
Therefore $\ker f$ is $\theta$-semistable. 
A dual argument shows that $\coker f$ is also $\theta$-semistable.
\end{proof}

As a corollary of the previous result and lemma \ref{catsemistables} we have the following.

\begin{cor}
Let $(M,P)$ be a $\tau$-rigid pair. 
Then $M^\perp \cap {^\perp \tau M} \cap P^\perp$ is a wide subcategory of $\emph{mod} A$.
\end{cor}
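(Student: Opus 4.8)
The plan is to deduce the corollary from the two results cited in its statement, namely Proposition~\ref{wide} and (what the corollary calls lemma~\ref{catsemistables}, which is actually) Proposition~\ref{catsemistables}. The strategy is to exhibit the subcategory $M^\perp \cap {^\perp}(\tau M) \cap P^\perp$ as the category of $\theta$-semistable modules for a suitably chosen $\theta \in \R^n$, and then invoke Proposition~\ref{wide}, which already establishes that any such category is wide.

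First I would choose a concrete stability vector. Given the $\tau$-rigid pair $(M,P)$, pick any $\alpha(M,P)$ lying in the interior $\rC_{(M,P)}$ of the cone $\Co_{(M,P)}$; for instance take all the defining coefficients $\alpha_i, \alpha_j$ equal to $1$, so that $\alpha(M,P) = \sum_{i=1}^k g^{M_i} - \sum_{j=k+1}^t g^{P_j}$. By Theorem~\ref{LI} the relevant $g$-vectors are linearly independent, so the interior $\rC_{(M,P)}$ is nonempty and this choice is legitimate. Second, I would apply Proposition~\ref{catsemistables} with this $\alpha(M,P)$: it gives exactly the identification
$$\{N \in \mod A : N \text{ is } \alpha(M,P)\text{-semistable}\} = M^\perp \cap {^\perp}(\tau M) \cap P^\perp.$$
Third, Proposition~\ref{wide} applied to the vector $\theta = \alpha(M,P)$ tells us that the left-hand side is a wide subcategory of $\mod A$, i.e.\ closed under kernels, cokernels and extensions. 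Combining the two displayed facts yields that $M^\perp \cap {^\perp}(\tau M) \cap P^\perp$ is wide, which is the assertion.

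There is essentially no obstacle here, since both ingredients are already proved; the only point requiring a moment's care is confirming that $\rC_{(M,P)}$ is nonempty so that a valid $\theta$ exists, but this is immediate from the linear independence of the $g$-vectors guaranteed by Theorem~\ref{LI}. One could alternatively give a direct proof mimicking Proposition~\ref{wide}: observe that for $N$ in the perpendicular category every submodule and every quotient also lies in it (as these perpendicular conditions pass to sub- and quotient modules via the long exact sequences and the vanishing $\Hom$'s), and then run the same kernel/cokernel diagram chase. However, the cleanest argument is simply to reduce to the already-established Proposition~\ref{wide} through the semistability reinterpretation, so that is the route I would take.
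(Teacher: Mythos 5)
Your proposal is correct and follows exactly the paper's own argument: choose $\alpha(M,P)$ in the interior $\rC_{(M,P)}$, identify $M^\perp \cap {^\perp}(\tau M) \cap P^\perp$ with the category of $\alpha(M,P)$-semistable modules via Proposition~\ref{catsemistables}, and conclude by Proposition~\ref{wide}. The extra remark that $\rC_{(M,P)}$ is nonempty by Theorem~\ref{LI} is a sensible (if implicit in the paper) sanity check.
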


\begin{proof}
Let $(M,P)$ be a $\tau$-rigid pair and let $\alpha(M,P)\in\rC_(M,P)$. 
Then lemma \ref{catsemistables} implies that $M^\perp \cap {^\perp \tau M} \cap P^\perp$ coincides with the category of $\alpha(M,P)$-semistable modules. 
Moreover, proposition \ref{wide} implies that $M^\perp \cap {^\perp \tau M} \cap P^\perp$ is a wide subcategory of $\mod A$.
\end{proof}

Now we use the notion of $\T_\theta$ to define a torsion class for a given chamber.

\begin{lem}
Let $\Ch$ be a chamber and consider the intersection 
$$\T_{\Ch}=\bigcap\limits_{\theta\in\Ch} \T_{\theta}$$
of all $\T_{\theta}$ when $\theta$ is in  $\Ch$. Then $\T_{\Ch}$ is a torsion class.
\end{lem}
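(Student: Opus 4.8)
The plan is to show that $\T_{\Ch}$ satisfies the defining properties of a torsion class, namely that it is closed under quotients and extensions, and that it is the left side of a torsion pair. The cleanest route exploits the fact that each individual $\T_\theta$ is already a torsion class, so $\T_{\Ch}$ is an intersection of torsion classes. First I would recall from Bridgeland's \cite[Lemma 6.6]{B16} that for each fixed $\theta \in \R^n$ the pair $(\T_\theta, \F_\theta)$ is a torsion pair; in particular every $\T_\theta$ is closed under quotients and extensions. Since these two closure properties are preserved under arbitrary intersections, $\T_{\Ch} = \bigcap_{\theta \in \Ch} \T_\theta$ is immediately closed under quotients and under extensions.

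The remaining and essential point is to produce a torsion-free class $\F_{\Ch}$ so that $(\T_{\Ch}, \F_{\Ch})$ forms a genuine torsion pair. The natural candidate is the right perpendicular class
$$\F_{\Ch} = \{ Y \in \mod A : \Hom_A(X,Y) = 0 \text{ for all } X \in \T_{\Ch} \}.$$
With $\F_{\Ch}$ defined this way, the orthogonality and the maximality of $\F_{\Ch}$ hold by construction. The substantive obligation is the maximality of $\T_{\Ch}$: if $N$ satisfies $\Hom_A(N, Y) = 0$ for every $Y \in \F_{\Ch}$, then $N \in \T_{\Ch}$. Equivalently, I must check that $\T_{\Ch}$ is closed under the operation $X \mapsto {}^\perp(\T_{\Ch}^\perp)$, i.e. that it is a torsion class in the abstract sense. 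Here I would invoke the standard fact (valid since $\mod A$ is a length category) that a full subcategory is the torsion part of a torsion pair if and only if it is closed under quotients and extensions. Having already established both closure properties, this criterion finishes the argument.

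The step I expect to require the most care is ensuring the abstract criterion genuinely applies: in an arbitrary abelian category, closure under quotients and extensions is not sufficient to guarantee a torsion pair, but in a finite length category such as $\mod A$ it is, because every module has finite length and so the trace subobject $tN$ with respect to $\T_{\Ch}$ is well defined and yields the canonical short exact sequence $0 \to tN \to N \to N/tN \to 0$ with $tN \in \T_{\Ch}$ and $N/tN \in \F_{\Ch}$. Concretely, I would define $tN$ as the (well-defined, by finite length) maximal submodule of $N$ lying in $\T_{\Ch}$, note that $N/tN$ has no nonzero submodule receiving a nonzero map from $\T_{\Ch}$, and verify $N/tN \in \F_{\Ch}$; the existence of this sequence for every $N$ is exactly what is needed to conclude that $(\T_{\Ch}, \F_{\Ch})$ is a torsion pair and hence that $\T_{\Ch}$ is a torsion class.

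An alternative, and perhaps more transparent, route avoids the abstract criterion entirely by observing that an \emph{arbitrary} intersection of torsion classes is again a torsion class. Indeed if $\{\T_i\}_{i \in I}$ is any family of torsion classes with corresponding torsion pairs $(\T_i, \F_i)$, then $\bigcap_i \T_i$ is closed under quotients and extensions, and its right perpendicular supplies the torsion-free class; the canonical sequences again exist by finite length. Applying this to the family $\{\T_\theta\}_{\theta \in \Ch}$ delivers the result directly. I would likely present this cleaner formulation, flagging only that the finite length of $\mod A$ is what legitimizes passing from the closure properties to a bona fide torsion pair.
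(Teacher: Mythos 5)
Your proposal is correct and follows essentially the same route as the paper, whose entire proof is to cite Bridgeland's Lemma 6.6 for the fact that each $\T_\theta$ is a torsion class together with the standard fact that an arbitrary intersection of torsion classes is again a torsion class. The only difference is that you spell out the proof of that standard fact (via closure under quotients and extensions plus the finite-length construction of the trace submodule), which the paper takes for granted.
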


\begin{proof}
This follows directly from \cite[Lemma 6.6]{B16} and the fact that an arbitrary intersection of torsion classes is again a torsion class.
\end{proof}

\begin{prop}\label{eqtorsionch}
Let $(M,P)$ be a $\tau$-tilting pair and $\Ch_{(M,P)}$ its induced chamber.
Then $\T_{\Ch_{(M,P)}}=\emph{Fac}M$.
\end{prop}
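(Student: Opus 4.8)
The plan is to prove the two inclusions $\T_{\Ch_{(M,P)}}\subseteq\Fac M$ and $\Fac M\subseteq\T_{\Ch_{(M,P)}}$ separately, using the explicit description $\Ch_{(M,P)}=\rC_{(M,P)}$ together with the computation of the trace in $\Fac M$ provided by Lemma~\ref{lemtrace} and Corollary~\ref{tauformula}. Throughout, write $\alpha=\alpha(M,P)$ for a generic vector in $\rC_{(M,P)}$ and recall that for such $\alpha$ all coefficients $\alpha_i,\alpha_j$ are strictly positive.

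\begin{proof}[Proof plan]
First I would establish $\Fac M\subseteq\T_{\Ch_{(M,P)}}$. Let $X\in\Fac M$ and let $N$ be any quotient of $X$; since $\Fac M$ is closed under quotients, $N\in\Fac M$ as well, so its trace in $\Fac M$ is $N$ itself, i.e.\ $tN=N$. Fixing any $\theta=\alpha\in\rC_{(M,P)}$, Lemma~\ref{lemtrace} gives $\langle\alpha,[N]\rangle=\langle\alpha,[tN]\rangle\geq 0$. As this holds for every quotient $N$ of $X$, we get $X\in\T_{\alpha}$ for every $\alpha\in\Ch_{(M,P)}$, hence $X\in\T_{\Ch_{(M,P)}}$.

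For the reverse inclusion $\T_{\Ch_{(M,P)}}\subseteq\Fac M$, I would argue by contradiction. Suppose $X\in\T_{\Ch_{(M,P)}}$ but $X\notin\Fac M$. Consider the canonical short exact sequence for the torsion pair $(\Fac M,M^\perp)$,
$$0\to tX\to X\to X/tX\to 0,$$
where $N:=X/tX\in M^\perp$ is a nonzero quotient of $X$. I would then show that $\langle\alpha,[N]\rangle<0$ for $\alpha\in\rC_{(M,P)}$, which contradicts $X\in\T_{\alpha}$. To compute the sign, apply Corollary~\ref{tauformula}: since $N\in M^\perp$ we have $\hom(M_i,N)=0$ for all $i$, so
$$\langle\alpha,[N]\rangle=-\sum_{i=1}^k\alpha_i\,\hom(N,\tau M_i)-\sum_{j=k+1}^t\alpha_j\,\hom(P_j,N)\leq 0,$$
with all $\alpha_i,\alpha_j>0$. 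The crux is then to rule out the degenerate case where this sum vanishes, i.e.\ where $\hom(N,\tau M)=0$ and $\hom(P,N)=0$ simultaneously; in that situation $N$ would lie in $M^\perp\cap{}^\perp(\tau M)\cap P^\perp$, which is zero because $(M,P)$ is $\tau$-tilting (as noted after Proposition~\ref{catsemistables}), forcing $N=0$ and hence $X=tX\in\Fac M$, the desired conclusion. When $N\neq 0$ the sum is strictly negative, giving the contradiction.

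The main obstacle I anticipate is the last step: controlling the sign of $\langle\alpha,[N]\rangle$ precisely and justifying that a nonzero quotient in $M^\perp$ cannot simultaneously satisfy $\hom(N,\tau M)=0$ and $\hom(P,N)=0$. This is exactly where the $\tau$-tilting hypothesis is essential, via the vanishing $M^\perp\cap{}^\perp(\tau M)\cap P^\perp=0$; for a merely $\tau$-rigid pair the statement would fail, since stable modules live precisely in that perpendicular category. Everything else reduces to the closure properties of torsion classes and to routine applications of Corollary~\ref{tauformula} and Lemma~\ref{lemtrace}.
\end{proof}
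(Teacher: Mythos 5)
Your proposal is correct and follows essentially the same route as the paper: both directions rest on the canonical sequence for the torsion pair $(\Fac M, M^\perp)$ together with Lemma~\ref{lemtrace} and its dual. The only difference is that where the paper tersely invokes ``the dual of Lemma~\ref{lemtrace}'' to force $N/tN=0$, you make the required strictness explicit by computing $\langle\alpha,[N/tN]\rangle$ via Corollary~\ref{tauformula} and ruling out the degenerate case through the vanishing $M^\perp\cap{}^\perp(\tau M)\cap P^\perp=0$ for $\tau$-tilting pairs --- a useful clarification of exactly where the $\tau$-tilting (rather than merely $\tau$-rigid) hypothesis enters.
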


\begin{proof}
We first show that $\T_{\Ch_{(M,P)}} \subseteq \Fac M $. 
The torsion pair $(\Fac M, M^\perp)$ yields for any $N\in\mod A$ the canonical short exact sequence 
$$0\ra tN \ra N \ra N/tN \ra 0$$
with $tN \in \Fac M$. 
The dual of Lemma \ref{lemtrace} states that $\langle \theta, N/tN\rangle\leq 0$ for every $\theta\in\Ch_{(M,P)}$.
Hence $N\in\T_{\Ch_{(M,P)}}$ implies that $N/tN=0$, that is, $N\in\Fac M$.

Conversely, if $N\in\Fac M$, every quotient $N'$ of $N$ belongs to $\Fac M$ because $\Fac M$ is a torsion class.
Therefore Lemma \ref{lemtrace} gives that $\langle \theta, N'\rangle > 0$ for every $\theta\in\Ch_{(M,P)}$ and every $N'\neq0$.
Hence, $N\in \T_\theta$ for every $\theta \in \Ch_{(M,P)}$, and therefore 
$$N\in \bigcap_{\theta\in \Ch_{(M,P)}} \T_\theta=\T_{\Ch_{(M,P)}}.$$
\end{proof}

\begin{rmk}\label{sametorsion}
Note that, if a chamber $\Ch$ is induced by a $\tau$-tilting pair $(M,P)$, then $\T_{\theta}=\Fac M$ for every $\theta\in\Ch$ by Theorem \ref{stablemodcat}.
\end{rmk}

Following the terminology introduced in \cite{DIJ} we say that an algebra $A$ is \textit{$\tau$-tilting finite} if the  number of indecomposable $\tau$-rigid modules is finite (up to isomorphism). 

\begin{cor}
Let $A$ be an algebra. Then the function $\Ch$ mapping a $\tau$-tilting pair $(M,P)$ to its corresponding chamber $\Ch_{(M,P)}$ is injective. Moreover, if $A$ is $\tau$-tilting finite then $\Ch$ is also surjective.
\end{cor}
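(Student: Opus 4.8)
The plan is to prove the two assertions separately, drawing on the machinery developed earlier in this subsection.

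\medskip

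\noindent\textbf{Injectivity.} First I would establish that $\Ch$ is injective. The key tool is Proposition \ref{eqtorsionch}, which shows $\T_{\Ch_{(M,P)}}=\Fac M$. Suppose that two $\tau$-tilting pairs $(M,P)$ and $(M',P')$ induce the same chamber, that is $\Ch_{(M,P)}=\Ch_{(M',P')}$. Then by Proposition \ref{eqtorsionch} we obtain $\Fac M = \T_{\Ch_{(M,P)}} = \T_{\Ch_{(M',P')}} = \Fac M'$. But Theorem \ref{TorClass} asserts that the assignment $\Phi(M,P)=\Fac M$ is a \emph{bijection} when restricted to $\tau$-tilting pairs, so from $\Fac M = \Fac M'$ we conclude $(M,P)=(M',P')$. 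This shows $\Ch$ is injective. (Alternatively, one could argue directly using Theorem \ref{inj-g-vecteurs} and the fact that the $g$-vectors determine the chamber $\rC_{(M,P)}$, but routing through the torsion class via Proposition \ref{eqtorsionch} is cleanest.)

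\medskip

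\noindent\textbf{Surjectivity in the $\tau$-tilting finite case.} For the second assertion, assume $A$ is $\tau$-tilting finite. Here I would invoke the geometric input cited in the excerpt: by \cite[Theorem 5.4]{DIJ}, the simplicial complex of $\tau$-rigid pairs is homeomorphic to the $(n-1)$-sphere, which means the cones $\Co_{(M,P)}$ over all $\tau$-tilting pairs $(M,P)$, together with their faces, cover all of $\R^n$. Concretely, the union of the closures $\overline{\rC_{(M,P)}}=\Co_{(M,P)}$ taken over the $\tau$-tilting pairs is the entire space $\R^n$. Now let $\mathfrak{C}$ be an arbitrary chamber of the wall and chamber structure; I would pick any point $\theta\in\mathfrak{C}$. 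Since the cones cover $\R^n$, the point $\theta$ lies in some $\Co_{(M,P)}$ for a $\tau$-tilting pair. If $\theta$ lay on a proper face (boundary) of this cone, then by the argument in the proof of Proposition \ref{chambers}, $\theta$ would lie in the interior of a lower-dimensional cone of an almost (or smaller) $\tau$-rigid pair, hence by Corollary \ref{coralmost} it would be contained in a wall, contradicting $\theta\in\mathfrak{C}\subset\mathfrak{R}$. Therefore $\theta\in\rC_{(M,P)}=\Ch_{(M,P)}$, and since a chamber is a connected component while $\Ch_{(M,P)}$ is itself a full-dimensional chamber by Proposition \ref{chambers}, we get $\mathfrak{C}=\Ch_{(M,P)}$. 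This exhibits $\mathfrak{C}$ as being in the image of $\Ch$, proving surjectivity.

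\medskip

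\noindent The step I expect to be the main obstacle is the surjectivity argument, specifically making rigorous the claim that the cones of $\tau$-tilting pairs cover $\R^n$ in the $\tau$-tilting finite case. Everything hinges on correctly importing the topological statement from \cite[Theorem 5.4]{DIJ} and translating "homeomorphic to $S^{n-1}$" into the covering property for the cones; one must be careful that the finiteness hypothesis is exactly what guarantees there are no gaps (no accumulation of walls forming dense regions, as warned about in the Kronecker remark). Once the covering is in hand, the identification of an arbitrary chamber with some $\Ch_{(M,P)}$ follows formally from the face analysis already carried out in Proposition \ref{chambers} and Corollary \ref{coralmost}.
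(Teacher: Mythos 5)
Your proof is correct and follows essentially the same route as the paper: injectivity via $\T_{\Ch_{(M,P)}}=\Fac M$ (Proposition \ref{eqtorsionch}) combined with the bijection of Theorem \ref{TorClass}, and surjectivity in the $\tau$-tilting finite case from the completeness of the $g$-vector fan established in \cite{DIJ}. The paper simply cites the relevant result of Demonet--Iyama--Jasso for the covering property, whereas you derive it from the sphere homeomorphism and then run the face analysis of Proposition \ref{chambers}; this is the same argument with the details filled in.
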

\begin{proof}
This follows directly from the fact that $\Ch_{(M,P)}=\text{Fac}M$ and \cite[Theorem 2.7]{AIR}. The moreover part follows from \cite[Theorem 1.5]{DIJ}
\end{proof}

\subsection{A detailed example}

During this section, we have illustrated each of our results using the hereditary algebra of type $\mathbb{A}_2$. 
The simplicity of the module category of this algebra is handy to provide simple (counter)examples. 
However, at the same time, its simplicity also can be misleading when trying to understand the general picture. 
We therefore finish this section giving a detailed example of a slightly bigger algebra. 
\bigskip

\begin{ex}\label{ex:Nakayama}
Let $A$ be the path algebra of the quiver 
$$\xymatrix{
  & 2\ar[dr]& \\
  1\ar[ru] & & 3\ar[ll] }$$
quotiented by the third power of its Jacobson radical.
The Auslander-Reiten quiver of $A$ can be seen in figure \ref{fig:Ar-quiverA}.
\begin{figure}
    \centering
			\begin{tikzpicture}[line cap=round,line join=round ,x=2.0cm,y=1.8cm]
				\clip(-2.2,-0.1) rectangle (4.1,2.5);
					\draw [->] (-0.8,0.2) -- (-0.2,0.8);
					\draw [->] (1.2,0.2) -- (1.8,0.8);
					\draw [->] (3.2,0.2) -- (3.8,0.8);
					\draw [->] (-1.8,1.2) -- (-1.2,1.8);
					\draw [->] (0.2,1.2) -- (0.8,1.8);
					\draw [->] (2.2,1.2) -- (2.8,1.8);
					\draw [dashed] (-0.8,0.0) -- (0.8,0.0);
					\draw [dashed] (1.2,0.0) -- (2.8,0.0);
					\draw [dashed] (-1.8,1.0) -- (-0.2,1.0);
					\draw [dashed] (0.2,1.0) -- (1.8,1.0);
					\draw [dashed] (2.2,1.0) -- (3.8,1.0);
					\draw [dashed] (-2.0,0.0) -- (-1.2,0.0);
					\draw [dashed] (3.2,0.0) -- (4.0,0.0);
					\draw [->] (0.2,0.8) -- (0.8,0.2);
					\draw [->] (2.2,0.8) -- (2.8,0.2);
					\draw [->] (-1.8,0.8) -- (-1.2,0.2);
					\draw [->] (-0.8,1.8) -- (-0.2,1.2);
					\draw [->] (1.2,1.8) -- (1.8,1.2);
					\draw [->] (3.2,1.8) -- (3.8,1.2);
				
				\begin{scriptsize}
					\draw[color=black] (-1,0) node {$\rep{2}$};
					\draw[color=black] (1,0) node {$\rep{1}$};
					\draw[color=black] (3,0) node {$\rep{3}$};
					\draw[color=black] (-2,1) node {$\rep{2\\3}$};
					\draw[color=black] (0,1) node {$\rep{1\\2}$};
					\draw[color=black] (2,1) node {$\rep{3\\1}$};
					\draw[color=black] (4,1) node {$\rep{2\\3}$};
					\draw[color=black] (-1,2) node {$\rep{1\\2\\3}$};
					\draw[color=black] (1,2) node {$\rep{3\\1\\2}$};
					\draw[color=black] (3,2) node {$\rep{2\\3\\1}$};
				\end{scriptsize}
			\end{tikzpicture}
\caption{The Auslander-Reiten quiver of $A$}
    \label{fig:Ar-quiverA}
\end{figure}
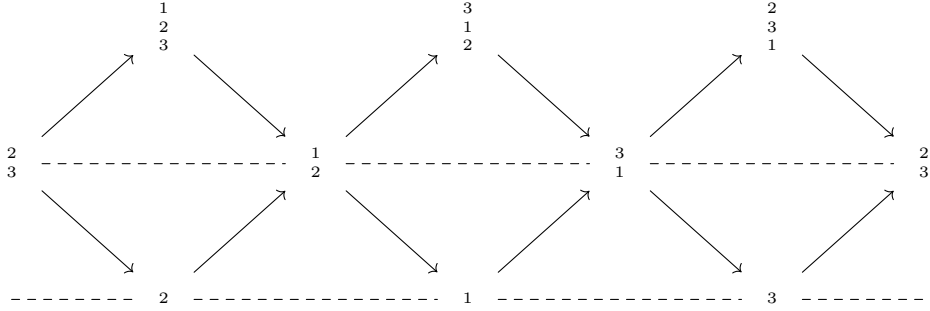
Note that every module is represented by its Loewy series and both copies of $\rep{2\\3}$ should be identified, so the Auslander-Reiten quiver of $A$ has the shape of a cylinder. 

Studying the $\tau$-tilting theory of $A$, is easy to see that there are 12 indecomposable $\tau$-rigid pairs in $\mod A$, since every indecomposable $A$-module is $\tau$-rigid. 
What is less obvious is that this algebra has 20 $\tau$-tilting pairs.
In table \ref{table:chambers} we give a complete list of the $\tau$-tilting pairs with their respective basis of $g$-vectors. 
Finally, in Figure \ref{fig:WCNakayama}, one can find the stereographic projection (from the point $(1,1,1)$) of the intersection of the unit sphere with the wall and chamber structure of $A$. 
The $g$-vectors of the indecomposable $\tau$-rigid pairs appear as vertices in Figure \ref{fig:WCNakayama}. They are visualized in three dimensions in \cite[Figure 1]{Yur}.
The chambers are labelled $\Ch_i$ where the subindex $i$ corresponds to the row number of the corresponding $\tau$-tilting pair in Table 1.
The stability space of each indecomposable module has a different color, while the $g$-vectors are written in red. 
Note that the stability spaces $\D\left(\rep{1\\2\\3}\right)$, $\D\left(\rep{2\\3\\1}\right)$ and $\D\left(\rep{3\\1\\2}\right)$ of $\rep{1\\2\\3}$, $\rep{2\\3\\1}$ and $\rep{3\\1\\2}$, respectively, fill up the plane orthogonal to the vector $(1,1,1)$.
This plane appears as the circle in Figure \ref{fig:WCNakayama} with three colors.
\begin{table}
\centering
\begin{tabular}{|l|c|c|}\hline
Chamber & $\tau$-tilting pair & $g$-vectors \\\hline
1 & $\left( \rep{1\\2\\3} \oplus \rep{2\\3\\1} \oplus \rep{3\\1\\2}, 0 \right)$ & $\left\{ \gvec{1\\0\\0}, \gvec{0\\1\\0}, \gvec{0\\0\\1} \right\}$ \\\hline
2 & $\left( \rep{1\\2\\3} \oplus \rep{2\\3\\1} \oplus \rep{2}, 0 \right)$ & $\left\{ \gvec{1\\0\\0}, \gvec{0\\1\\0}, \gvec{0\\1\\-1} \right\}$ \\\hline
3 & $\left( \rep{1\\2\\3} \oplus \rep{3\\1\\2} \oplus \rep{1}, 0 \right)$ & $\left\{ \gvec{1\\0\\0}, \gvec{0\\0\\1}, \gvec{1\\-1\\0} \right\}$\\\hline
4 & $\left( \rep{2\\3\\1} \oplus \rep{3\\1\\2} \oplus \rep{3}, 0 \right)$ & $\left\{ \gvec{0\\1\\0}, \gvec{0\\0\\1}, \gvec{-1\\0\\1} \right\}$ \\\hline
5 & $\left( \rep{1\\2\\3} \oplus \rep{1\\2} \oplus \rep{2}, 0 \right)$ & $\left\{ \gvec{1\\0\\0}, \gvec{1\\0\\-1}, \gvec{0\\1\\-1} \right\}$ \\\hline
6 & $\left( \rep{1\\2\\3} \oplus \rep{1\\2} \oplus \rep{1}, 0 \right)$ & $\left\{ \gvec{1\\0\\0}, \gvec{1\\0\\-1}, \gvec{1\\-1\\0} \right\}$ \\\hline
7 & $\left( \rep{2\\3\\1} \oplus \rep{2\\3} \oplus \rep{3}, 0 \right)$ & $\left\{ \gvec{0\\1\\0}, \gvec{-1\\1\\0}, \gvec{-1\\0\\1} \right\}$ \\\hline
8 & $\left( \rep{2\\3\\1} \oplus \rep{2\\3} \oplus \rep{2}, 0 \right)$ & $\left\{ \gvec{0\\1\\0}, \gvec{-1\\1\\0}, \gvec{0\\1\\-1} \right\}$ \\\hline
9 & $\left( \rep{3\\1\\2} \oplus \rep{3\\1} \oplus \rep{1}, 0 \right)$ & $\left\{ \gvec{0\\0\\1}, \gvec{0\\-1\\1}, \gvec{1\\-1\\0} \right\}$ \\\hline
10 & $\left( \rep{3\\1\\2} \oplus \rep{3\\1} \oplus \rep{3} , 0 \right)$ & $\left\{ \gvec{0\\0\\1}, \gvec{0\\-1\\1}, \gvec{-1\\0\\1} \right\}$\\\hline
11& $\left( \rep{3\\1} \oplus \rep{1}, \rep{2\\3\\1} \right)$ & $\left\{ \gvec{0\\-1\\1}, \gvec{1\\-1\\0}, \gvec{0\\-1\\0} \right\}$ \\\hline
12& $\left( \rep{3\\1} \oplus \rep{3}, \rep{2\\3\\1} \right)$ & $\left\{ \gvec{0\\-1\\1}, \gvec{-1\\0\\1}, \gvec{0\\-1\\0} \right\}$ \\\hline
13& $\left( \rep{2\\3} \oplus \rep{3}, \rep{1\\2\\3}\right)$ & $\left\{ \gvec{-1\\1\\0}, \gvec{-1\\0\\1}, \gvec{-1\\0\\0} \right\}$ \\\hline
14& $\left( \rep{2\\3} \oplus \rep{2}, \rep{1\\2\\3} \right)$ & $\left\{ \gvec{-1\\1\\0}, \gvec{0\\1\\-1}, \gvec{-1\\0\\0} \right\}$ \\\hline
15& $\left( \rep{1\\2} \oplus \rep{2}, \rep{3\\1\\2}\right)$ & $\left\{ \gvec{1\\0\\-1}, \gvec{0\\1\\-1}, \gvec{0\\0\\-1} \right\}$ \\\hline
16& $\left( \rep{1\\2} \oplus \rep{1}, \rep{3\\1\\2}\right)$ & $\left\{ \gvec{1\\0\\-1}, \gvec{1\\-1\\0}, \gvec{0\\0\\-1} \right\}$ \\\hline
17& $\left( \rep{3}, \rep{1\\2\\3} \oplus \rep{3\\1\\2} \right)$ & $\left\{ \gvec{-1\\0\\1}, \gvec{0\\-1\\0}, \gvec{-1\\0\\0} \right\}$ \\\hline
18& $\left( \rep{2}, \rep{1\\2\\3} \oplus \rep{3\\1\\2}\right)$ & $\left\{ \gvec{0\\1\\-1}, \gvec{-1\\0\\0}, \gvec{0\\0\\-1} \right\}$ \\\hline
19& $\left( \rep{1}, \rep{2\\3\\1} \oplus \rep{3\\1\\2}\right)$ & $\left\{ \gvec{1\\-1\\0}, \gvec{0\\-1\\0}, \gvec{0\\0\\-1} \right\}$ \\\hline
20 & $\left( 0, \rep{1\\2\\3} \oplus \rep{2\\3\\1} \oplus \rep{3\\1\\2}\right)$ & $\left\{ \gvec{-1\\0\\0}, \gvec{0\\-1\\0}, \gvec{0\\0\\-1} \right\}$  \\\hline
\end{tabular}
\caption{The list of $\tau$-tilting pairs in $\mod A$}
\label{table:chambers}
\end{table}

\definecolor{ffzztt}{rgb}{1,0.6,0.2}
\definecolor{qqccqq}{rgb}{0,0.8,0}
\definecolor{wwqqcc}{rgb}{0.4,0,0.8}
\definecolor{yqqqyq}{rgb}{0.5019607843137255,0,0.5019607843137255}
\definecolor{qqttqq}{rgb}{0,0.2,0}
\definecolor{qqttcc}{rgb}{0,0.2,0.8}
\definecolor{ffqqtt}{rgb}{1,0,0.2}
\definecolor{ffqqqq}{rgb}{1,0,0}
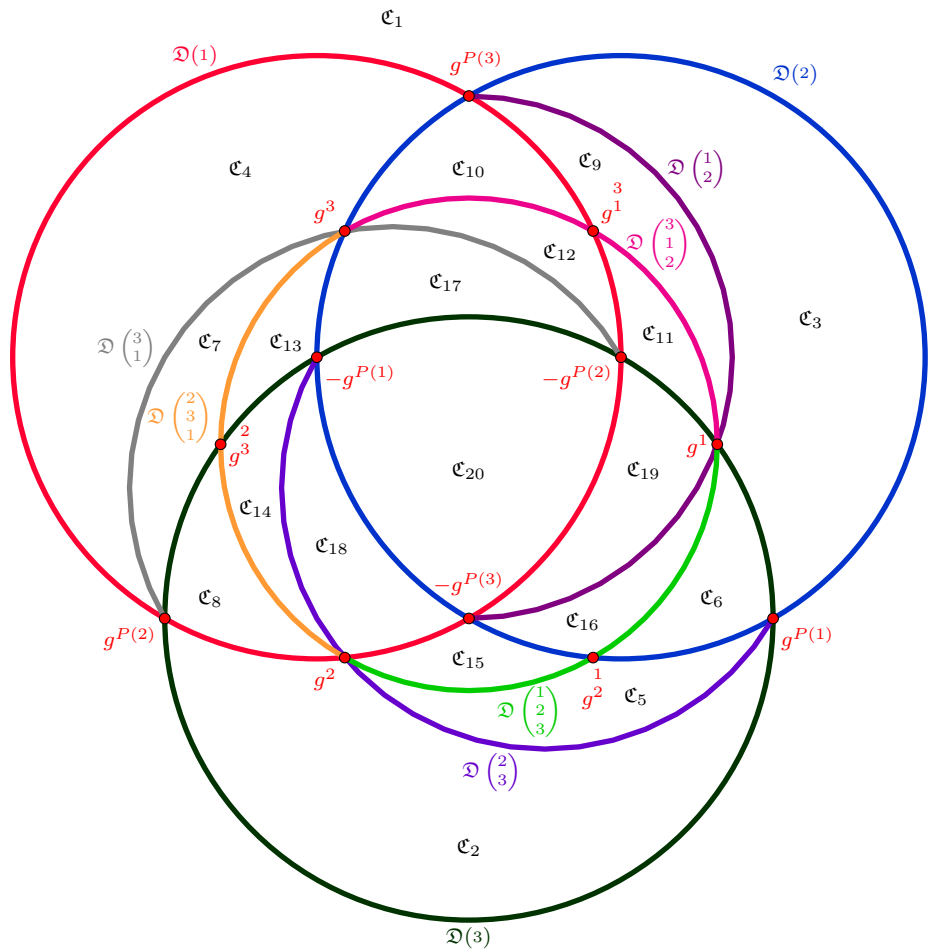
\begin{figure}
    \centering
    \begin{tikzpicture}[line cap=round,line join=round,>=triangle 45,x=1cm,y=1cm]
\clip(-7.843950349746072,-9.0) rectangle (8.613158409540794,6.839877367676253);
\draw [line width=2pt,color=ffqqtt] (-2,0) circle (4cm);
\draw [line width=2pt,color=qqttcc] (2,0) circle (4cm);
\draw [line width=2pt,color=qqttqq] (0,-3.464101615137755) circle (4cm);
\draw [shift={(0,0)},line width=2pt,color=yqqqyq]  plot[domain=-1.5707963267948966:1.5707963267948966,variable=\t]({1*3.4641016151377544*cos(\t r)+0*3.4641016151377544*sin(\t r)},{0*3.4641016151377544*cos(\t r)+1*3.4641016151377544*sin(\t r)});
\draw [shift={(1,-1.7320508075688772)},line width=2pt,color=wwqqcc]  plot[domain=2.6179938779914944:5.759586531581288,variable=\t]({1*3.4641016151377544*cos(\t r)+0*3.4641016151377544*sin(\t r)},{0*3.4641016151377544*cos(\t r)+1*3.4641016151377544*sin(\t r)});
\draw [shift={(-1,-1.7320508075688772)},line width=2pt,color=gray]  plot[domain=0.5235987755982988:3.665191429188092,variable=\t]({1*3.4641016151377544*cos(\t r)+0*3.4641016151377544*sin(\t r)},{0*3.4641016151377544*cos(\t r)+1*3.4641016151377544*sin(\t r)});
\draw [shift={(0,-1.154700538379251)},line width=2pt,color=magenta]  plot[domain=0:2.0943951023931957,variable=\t]({1*3.2659863237109037*cos(\t r)+0*3.2659863237109037*sin(\t r)},{0*3.2659863237109037*cos(\t r)+1*3.2659863237109037*sin(\t r)});
\draw [shift={(0,-1.1547005383792532)},line width=2pt,color=qqccqq]  plot[domain=-2.0943951023931957:0,variable=\t]({1*3.265986323710903*cos(\t r)+0*3.265986323710903*sin(\t r)},{0*3.265986323710903*cos(\t r)+1*3.265986323710903*sin(\t r)});
\draw [shift={(0,-1.154700538379251)},line width=2pt,color=ffzztt]  plot[domain=2.0943951023931953:4.188790204786391,variable=\t]({1*3.2659863237109037*cos(\t r)+0*3.2659863237109037*sin(\t r)},{0*3.2659863237109037*cos(\t r)+1*3.2659863237109037*sin(\t r)});
\begin{scriptsize}
\draw [fill=ffqqqq] (-2,0) circle (2.0pt) node[align=right, anchor= north west, color=red] {$-g^{P(1)}$};
\draw [fill=ffqqqq] (2,0) circle (2.0pt) node[align=left, left, color=red, anchor=north east ] {$-g^{P(2)}$};
\draw [fill=ffqqqq] (0,-3.464101615137755) circle (2pt) ;
\draw [fill=ffqqqq] (0,3.4641016151377544) circle (2pt) ;
\draw [fill=ffqqqq] (4,-3.4641016151377544) circle (2pt) node[align=center, above, color=red, anchor=north west] {$g^{P(1)}$};
\draw [fill=ffqqqq] (-4,-3.4641016151377544) circle (2pt) node[align=center, above, color=red, anchor=north east] {$g^{P(2)}$};
\draw [fill=ffqqqq] (-1.6329931618554523,1.6737265863669384) circle (2pt) node[align=center, above, color=red, anchor=south east] {$g^{\rep{3}}$};
\draw [fill=ffqqqq] (3.2659863237109037,-1.1547005383792517) circle (2pt) node[align=center, above, color=red, anchor=east] {$g^{\rep{1}}$};
\draw [fill=ffqqqq] (-1.6329931618554516,-3.9831276631254413) circle (2pt) node[align=right, above, color=red, anchor=north east] {$g^{\rep{2}}$};
\draw [fill=ffqqqq] (1.632993161855452,1.6737265863669386) circle (2pt) node[align=center, above, color=red, anchor=south west] {$g^{\rep{3\\1}}$};
\draw [fill=ffqqqq] (1.632993161855452,-3.983127663125442) circle (2pt) node[align=center, above, color=red, anchor=north ] {$g^{\rep{1\\2}}$};
\draw [fill=ffqqqq] (-3.2659863237109046,-1.154700538379252) circle (2pt) node[align=center, color=red, anchor=west] {$g^{\rep{2\\3}}$};
\draw[color=red] (0, -3.0) node {$-g^{P(3)}$};
\draw[color=red] (0.1, 3.9) node {$g^{P(3)}$};
            \draw[color=ffqqtt] (-4,3.75) node[anchor=south west] {$\mathfrak{D}(\rep{1})$};
			\draw[color=qqttcc] (4.3,3.75) node {$\mathfrak{D}(\rep{2}$)};
			\draw[color=qqttqq] (0,-7.7) node {$\mathfrak{D}(\rep{3})$};
			\draw[color=yqqqyq] (3.0,2.5) node {$\mathfrak{D}\left(\rep{1\\2}\right)$};
			\draw[color=gray] (-4.5,0.12) node {$\mathfrak{D}\left(\rep{3\\1}\right)$};
			\draw[color=wwqqcc] (0.3,-5.5) node {$\mathfrak{D}\left(\rep{2\\3}\right)$};
			\draw[color=qqccqq] (0.8,-4.7) node {$\mathfrak{D}\left(\rep{1\\2\\3}\right)$};
			\draw[color=ffzztt] (-3.8,-0.8) node {$\mathfrak{D}\left(\rep{2\\3\\1}\right)$};
			\draw[color=magenta] (2.5,1.5) node {$\mathfrak{D}\left(\rep{3\\1\\2}\right)$};
			
			\draw (-1,4.5) node {$\Ch_{1}$};
			\draw (0,-6.5) node {$\Ch_{2}$};
			\draw (4.5,0.5) node {$\Ch_{3}$};
			\draw (-3,2.5) node {$\Ch_{4}$};
			\draw (2.2,-4.5) node {$\Ch_{5}$};
			\draw (3.2,-3.2) node {$\Ch_{6}$};
			\draw (-3.4,0.2) node {$\Ch_{7}$};
			\draw (-3.4,-3.2) node {$\Ch_{8}$};
			\draw (1.6,2.6) node {$\Ch_{9}$};
			\draw (0,2.5) node {$\Ch_{10}$};
			\draw (2.5,0.3) node {$\Ch_{11}$};
			\draw (1.2,1.4) node {$\Ch_{12}$};
			\draw (-2.4,0.2) node {$\Ch_{13}$};
			\draw (-2.8,-2) node {$\Ch_{14}$};
			\draw (0,-4) node {$\Ch_{15}$};
			\draw (1.5,-3.5) node {$\Ch_{16}$};
			\draw (-0.3,1.0) node {$\Ch_{17}$};
			\draw (-1.8,-2.5) node {$\Ch_{18}$};
			\draw (2.3,-1.5) node {$\Ch_{19}$};
			\draw (0,-1.5) node {$\Ch_{20}$};
\end{scriptsize}
\end{tikzpicture}
\caption{The stereographic projection of the wall and chamber structure of $A$}
\label{fig:WCNakayama}
\end{figure}
\end{ex}

\newpage
\section{$\D$-generic paths in the wall and chamber structure of an algebra}\label{Sc:MGSmodcat}

In this section we study paths in the wall and chamber structure of an algebra. Using the results of the previous section, we show that every maximal green sequence is induced by a path the wall and chamber structure. As an application we show that certain algebras do not admit maximal green sequences in their module category. 

\subsection{Mutations and $\mathfrak{D}$-generic paths}

The wall and chamber structure of an algebra is part of the scattering diagram of the algebra. 
Even if there are multiple scattering diagrams for a given algebra, in general one is interested only in those which are \textit{consistent}. 
The precise definition of consistency of scattering diagrams is beyond the scope this paper, but it relies heavily on a particular class of paths. 

There are multiple versions of paths defined in the literature, such as the broken lines in \cite{GHKK} or the straight lines considered by Igusa in \cite{Igu}.
We use here the $\mathfrak{D}$-generic paths defined by Bridgeland in \cite{B16} as follows.

\begin{defi}\cite[\S 2.7]{B16}\label{defDgeneric}
We say that a smooth path $\gamma:[0,1]\to \mathbb{R}^n$ is a $\mathfrak{D}$-generic path if:
\begin{enumerate}
\item $\gamma(0)$ and $\gamma(1)$ do not belong to the stability space $\D(M)$ of a nonzero $A$-module $M$, that is, they are located inside some chambers;
\item If $\gamma(t)$ belongs to the intersection $\D(M)\cap\D(N)$ of two walls, then the dimension vector $[M]$ of $M$ is a scalar multiple of the dimension vector $[N]$ of $N$;
\item whenever $\gamma(t)$ is in $\D(M)$, then $\langle \gamma'(t), [M] \rangle \neq 0$.
\end{enumerate}
\end{defi}

In other words, a smooth path is $\D$-generic if crosses one wall at a time and the crossing is transversal. 

Note that in Definition \ref{defDgeneric}.3 we only ask that $\langle \gamma'(t), [M] \rangle \neq 0$ for each $t$ such that $\gamma(t)\in\D(M)$ for some nonzero $A$-module $M$. 
We say more precisely that a crossing is \textit{green} if $\langle \gamma'(t), [M] \rangle > 0$. 
Otherwise we say that the crossing is \textit{red}. 
A $\D$-generic path is called {\em green} if all its crossings are green.

Consider now a set $\{(M_0, P_0), (M_1, P_1), \dots, (M_r, P_r)\}$ of $\tau$-tilting pairs such that $(M_i, P_i)$ is a mutation of $(M_{i-1}, P_{i-1})$. 
By Corollary \ref{walls&chambers}, each $(M_i, P_i)$ defines a chamber $\Ch_{(M_{i},P_i)}$ in the wall and chamber structure for $A$. 
Moreover, Proposition \ref{walls} ensures that consecutive chambers share a wall. 
This allows to construct a piecewise linear path $\gamma:[0,1] \to \mathbb{R}^n$ going in straight lines from the central element $g^{M_i}-g^{P_i}$ of the cone $\Ch_{(M_{i},P_i)}$ to the next one. 
The precise definition in parametric form is as follows:
$$\gamma(t)= (1-rt+i)(g^{M_i}-g^{P_i})-(rt-i)(g^{M_{i+1}}-g^{P_{i+1}}) \text{ if } t\in\left[ \frac{i}{r}, \frac{i+1}{r} \right].$$

By construction,  $\gamma\left(\frac{i}{r}\right) = g^{M_i}-g^{P_i} \in \Ch_{(M_i, P_i)}$ for every $0\leq i\leq r$. 
In Theorem \ref{Dgenericpath}, one of the main results of this paper, we show that every finite sequence of mutations is represented by a $\D$-generic path which is closely related to the path $\gamma$ that we just constructed. 
We first consider the case of one mutation:

\begin{lem}\label{lemDgeneric}
Let $(M_0, P_0)$ and $(M_1, P_1)$ be two $\tau$-tilting pairs for the algebra $A$ such that one is a mutation of the other. 
Then the path $\gamma:[0,1]\to \mathbb{R}^n$ defined by
$$\gamma(t)=(1-t)(g^{M_0}-g^{P_0})+t(g^{M_1}-g^{P_1}) \mbox{ for } t\in [0,1]$$
is a $\D$-generic path in the wall and chamber structure of $A$ that starts in $\Ch_{(M_0, P_0)}$, finishes in $\Ch_{(M_1, P_1)}$ and crosses only one wall. 
Moreover the crossing is green if and only if $M_1$ is a left mutation of $M_0$, that is, $\emph{Fac} M_0\subset \emph{Fac} M_1$.
\end{lem}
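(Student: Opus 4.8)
The plan is to realise $\gamma$ as the segment joining the two central vectors of the adjacent cones $\Co_{(M_0,P_0)}$ and $\Co_{(M_1,P_1)}$, and to show that it passes cleanly through the shared facet. Write $(\bar M,\bar P)$ for the common almost $\tau$-tilting pair that both $(M_0,P_0)$ and $(M_1,P_1)$ contain as a summand (Theorem \ref{mutation}), and let $v_1,\dots,v_{n-1}$ be the spanning vectors of its cone $\Co_{(\bar M,\bar P)}$, so that, putting $s=\sum_{i=1}^{n-1}v_i$, one has $g^{M_0}-g^{P_0}=s+w_0$ and $g^{M_1}-g^{P_1}=s+w_1$, where $w_0,w_1$ are the two remaining spanning vectors. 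By Corollary \ref{coralmost} and Proposition \ref{walls} the facet $\Co_{(\bar M,\bar P)}$ is contained in a single wall $\D(N)$, and since this facet has dimension $n-1$ its linear span is exactly the hyperplane $[N]^{\perp}$; in particular $\langle w_0,[N]\rangle\neq 0$ because $\{v_1,\dots,v_{n-1},w_0\}$ is a basis.

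The key input is the exchange relation for the $g$-vectors under $\tau$-tilting mutation from \cite{AIR}: the exchange triangle relating the two mutated summands yields $w_0+w_1=\sum_{i=1}^{n-1}a_i v_i$ with all $a_i\geq 0$. Substituting this into $\gamma$ gives, for $t\in[0,1]$,
$$\gamma(t)=\sum_{i=1}^{n-1}(1+t\,a_i)\,v_i+(1-2t)\,w_0,$$
and, rewriting $w_0=-w_1+\sum_i a_i v_i$, also
$$\gamma(t)=\sum_{i=1}^{n-1}\bigl(1+(1-t)a_i\bigr)\,v_i+(2t-1)\,w_1.$$
Reading off the coordinates in the two bases shows at once that $\gamma(t)\in\rC_{(M_0,P_0)}$ for $t\in[0,\tfrac12)$, that $\gamma(t)\in\rC_{(M_1,P_1)}$ for $t\in(\tfrac12,1]$, and that $\gamma(\tfrac12)=\sum_i(1+\tfrac{a_i}{2})v_i\in\rC_{(\bar M,\bar P)}$. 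Thus the endpoints lie in the chambers $\Ch_{(M_0,P_0)}$ and $\Ch_{(M_1,P_1)}$, the path meets the closed wall set only at the single interior point $\gamma(\tfrac12)$ of the facet $\Co_{(\bar M,\bar P)}\subset\D(N)$, and it crosses exactly the wall $\D(N)$. This verifies condition (1) of Definition \ref{defDgeneric} together with the geometric claims of the statement.

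For conditions (2) and (3) I would argue at the crossing point $\theta_0=\gamma(\tfrac12)$. Since $(\bar M,\bar P)$ is almost $\tau$-tilting, Theorem \ref{stablemodcat} identifies the category of $\theta_0$-semistable modules with $\mod C_{(\bar M,\bar P)}$, an algebra of rank $n-|\bar M|-|\bar P|=1$; such an algebra is local, so every $\theta_0$-semistable module has class a multiple of $[N]$ in $K_0(A)$. Hence any walls through $\theta_0$ are mutually parallel, giving condition (2). For transversality, $\gamma'(t)=w_1-w_0$ is constant and $\langle\gamma'(t),[N]\rangle=\langle w_1-w_0,[N]\rangle=-2\langle w_0,[N]\rangle\neq0$ by the first paragraph, so condition (3) holds for $[N]$ and hence for every dimension vector parallel to it.

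Finally, for the colour of the crossing I would compute $\langle\gamma(t),[N]\rangle=(1-2t)\langle w_0,[N]\rangle$, using $\langle s,[N]\rangle=0$ and $\langle w_1,[N]\rangle=-\langle w_0,[N]\rangle$. By Proposition \ref{walls} the module $N$ is a quotient of the extra summand $M'$ of the larger of the two $\tau$-tilting modules, so $N$ lies in $\Fac$ of that larger module; consequently, by the strict inequality in Lemma \ref{lemtrace}, $\langle\theta,[N]\rangle>0$ for every $\theta$ in the chamber attached to the larger torsion class, and therefore $\langle\theta,[N]\rangle<0$ on the other chamber. Comparing with the sign of $(1-2t)$ pins down $\langle w_0,[N]\rangle$: the crossing is green, i.e. $\langle\gamma'(t),[N]\rangle=-2\langle w_0,[N]\rangle>0$, precisely when the $t>\tfrac12$ chamber $\Ch_{(M_1,P_1)}$ carries the larger torsion class, that is when $\Fac M_0\subset\Fac M_1$ and $M_1$ is a left mutation of $M_0$. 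The main obstacle is the second paragraph: everything hinges on the crossing point landing in the \emph{interior} of the shared facet rather than leaking out through a lower-dimensional face, and this is exactly what the nonnegativity of the exchange coefficients $a_i$ guarantees; justifying that relation (and the resulting value $t=\tfrac12$) is the heart of the argument.
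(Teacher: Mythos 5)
Your proof is correct, and its skeleton matches the paper's: you decompose the two $g$-vectors over the shared almost $\tau$-tilting pair (your $s+w_0$, $s+w_1$ is exactly the paper's $(g^M-g^P)+g'$, $(g^M-g^P)+g''$), you get condition (2) of Definition \ref{defDgeneric} from Theorem \ref{stablemodcat} via the rank-one semistable subcategory, and you determine the colour of the crossing from the strict inequality of Lemma \ref{lemtrace}. The genuine difference is your use of the exchange relation $w_0+w_1=\sum_i a_iv_i$ with $a_i\ge 0$, extracted from the exchange triangle of the two complements. This buys you something the paper's proof only asserts: that $\Ch_{(M_0,P_0)}$ and $\Ch_{(M_1,P_1)}$ really are adjacent along the facet $\Co_{(\bar M,\bar P)}$ and that $\gamma$ stays inside the two open cones except at the single point $\gamma(\tfrac12)$, which lands in the \emph{interior} of the facet rather than in a lower-dimensional face. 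The paper simply states that the chambers are ``neighboring chambers separated only by $\Co_{(M,P)}$'' and checks transversality instead by deriving $\langle g',[N]\rangle<0<\langle g'',[N]\rangle$ from Lemma \ref{lemtrace} and its dual, which gives $\langle\gamma'(t),[N]\rangle\neq 0$ and the colour in one stroke; you obtain transversality separately, by pure linear algebra ($w_0\notin[N]^\perp$ since the $v_i$ already span $[N]^\perp$), and then use Lemma \ref{lemtrace} only for the sign. The trade-off: your argument is geometrically more complete but imports a fact not stated in the paper --- the relation $[X_0]+[X_1]=[Y']\in\sum\Z_{\ge 0}[Y_i]$ for the two complements of an almost complete $2$-term silting object --- which, as you note yourself, is the load-bearing step and should be cited precisely (it follows from the exchange triangles in \cite{AIR}, not from the $g$-vector statements alone). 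Two small imprecisions: an algebra with Grothendieck group of rank one need not be local unless it is basic, though your actual conclusion (every class is a multiple of the unique simple's class) holds regardless; and your deduction that $\langle\theta,[N]\rangle<0$ on the ``smaller'' chamber is only justified along the path via the formula $(1-2t)\langle w_0,[N]\rangle$, which is all you need.
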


\begin{proof}
Of course, the point $\gamma(0)=g^{M_0}-g^{P_0}$ is given by the coordinate vector $\alpha(M_0, P_0) = (1, \ldots, 1)$ in the cone $\Co_{(M_0, P_0)}$ and thus belongs to the interior, likewise for $\gamma(1)=g^{M_1}-g^{P_1}$.  
Thus the property (1) in Definition \ref{defDgeneric} follows from Proposition \ref{chambers}.

Because $(M_1, P_1)$ is a mutation of $(M_0, P_0)$, there is an almost $\tau$-tilting pair $(M,P)$ such that $M$ is a direct factor of $M_0$ and $M_1$ while $P$ is a direct summand of $P_0$ and $P_1$. 
We have that $\Ch_{(M_0, P_0)}$ and $\Ch_{(M_1, P_1)}$ are neighboring chambers separated only by $\C_{(M,P)}$, which is contained in the a wall $\D(N)$ where $N$ is constructed in proposition \ref{walls}.
Moreover Theorem \ref{stablemodcat} implies the existence of a unique $\alpha(M,P)$-stable module $S$.
Hence $N$ and every other module $N'$ in the abelian category of $\alpha(M,P)$-semistable module has a dimension vector which is a scalar multiple of $S$. 
Therefore, if $\gamma$ crosses another wall $\D(N')$, then $N'$ is an $\alpha(M,P)$-semistable module, hence $[N']$ is a scalar multiple of $[N]$. 
This shows condition (2) in Definition \ref{defDgeneric}.

As the almost $\tau$-tilting pair $(M,P)$ is a direct factor of $(M_0, P_0)$ and $(M_1, P_1)$, we can write the $g$-vectors $g^{M_0}-g^{P_0}$ and $g^{M_1}-g^{P_1}$ of the $\tau$-tilting pairs $(M_0, P_0)$ and $(M_1, P_1)$  as 
\begin{equation}\label{eq1}
g^{M_0}-g^{P_0}=(g^M-g^P)+g'
\end{equation}
and 
\begin{equation}\label{eq2}
g^{M_1}-g^{P_1}=(g^M-g^P)+g'',
\end{equation} where $g'$ and $g''$ are the $g$-vectors of the complements of $(M,P)$ in $(M_0, P_0)$ and $(M_1, P_1)$, respectively.
This yields the following reformulation of the function $\gamma$:
\begin{equation}
\gamma(t)=(1-t)g'+t g''+(g^M-g^P)
\end{equation}
We therefore have $\gamma'(t)=-g'+g''$ for every $t\in[0,1]$. It remains to show that 
$$\langle\gamma'(t),[N]\rangle =\langle -g'+g'' , [N] \rangle\neq 0$$ where $N$ is constructed in proposition \ref{walls}.

Suppose that $\Fac M_0 \subset \Fac M_1$, then we know that $\Fac M=\Fac M_0$. Thus, we have that $(\T_0, \F_0)=(\Fac M, M^\perp)$ and $(T_1, \F_1)=(^\perp(\tau M)\cap P^\perp, \F_1)$ are the torsion pairs associated to $(M_0, P_0)$ and $(M_1, P_1)$, respectively. 
Moreover, we have that $N\in M^{\perp}\cap {^{\perp}(\tau M)}\cap P^\perp$ which is contained in $M^\perp = \F_0$ and, at the same time, is contained in $^{\perp}(\tau M)\cap P^\perp=\T_1$. Hence, Lemma \ref{lemtrace} implies the following. 
\begin{eqnarray}
0>\langle g^{M_0}-g^{P_0}, [N]\rangle=\langle g', [N]\rangle+\langle g^M-g^P, [N] \rangle=\langle g', [N]\rangle\\
0<\langle g^{M_1}-g^{P_1}, [N]\rangle=\langle g'', [N]\rangle+\langle g^M-g^P, [N] \rangle=\langle g'', [N]\rangle
\end{eqnarray}
Therefore we have that $$\langle\gamma'(t),[N]\rangle=\langle -g'+g'' , [N] \rangle=\langle -g', [N] \rangle + \langle g'', [N] \rangle > 0,$$ 
which means that the crossing is green. Using the same arguments one shows that if $\Fac M=\Fac M_1$, then the crossing is red. This completes the proof.
\end{proof}

Now we are able to prove the main result of this subsection. 

\begin{theorem}\label{Dgenericpath}
Let $\{(M_0, P_0), (M_1, P_1), \dots, (M_r, P_r)\}$ be a set of $\tau$-tilting pairs such that for every $i \ge 1$, $(M_i, P_i)$ is a mutation of $(M_{i-1}, P_{i-1})$. 
Then there exists a $\D$-generic path $\tilde{\gamma}:[0,1]\to \mathbb{R}^n$ with exactly $r$ points $\{t_1, \dots, t_r\}$ in $[0,1]$ where $\tilde{\gamma}(t_i)$ belongs to $\D(N_i)$ for some nonzero module $N_i$ and $\tilde{\gamma}(t)$ belong to $\Ch_{(M_i, P_i)}$ for every $t$ in the interval $(t_i, t_{i+1})$.
\end{theorem}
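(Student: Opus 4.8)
The plan is to build $\tilde{\gamma}$ by concatenating the $r$ single-mutation paths supplied by Lemma \ref{lemDgeneric} and then smoothing the resulting corners. First I would take the piecewise linear path $\gamma$ defined just before Lemma \ref{lemDgeneric}, which visits the chamber centers $g^{M_i}-g^{P_i}$ in order; on each subinterval $[i/r,(i+1)/r]$ it is an affine reparametrization of the straight path of Lemma \ref{lemDgeneric} joining two mutation-adjacent $\tau$-tilting pairs. Applying that lemma segment by segment, each segment crosses exactly one wall $\D(N_{i+1})$ --- namely the wall containing the facet $\Co_{(M,P)}$ of the shared almost $\tau$-tilting pair, with $N_{i+1}$ constructed as in Proposition \ref{walls} --- and this crossing is transversal; moreover any other wall met at that point has dimension vector collinear with $[N_{i+1}]$, since all modules semistable there are governed by the unique stable module provided by Theorem \ref{stablemodcat}. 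Thus conditions (2) and (3) of Definition \ref{defDgeneric} already hold on the interior of every segment, and the crossing occurs at a single interior time which I label $t_{i+1}$.

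The only defect of $\gamma$ is that it fails to be smooth at the junction times $t=i/r$. Here I would use that the junction point $\gamma(i/r)=g^{M_i}-g^{P_i}$ lies in the interior of the chamber $\Ch_{(M_i,P_i)}$, by Proposition \ref{chambers}. Since a chamber is a connected component of the open set $\mathfrak{R}=\R^n\setminus\overline{\bigcup_{M}\D(M)}$, it is open and contains no point of any stability space $\D(M)$ whatsoever. Hence there is a ball $B_i\subset\Ch_{(M_i,P_i)}$ centred at $\gamma(i/r)$, and I can replace $\gamma$ on a short interval about $i/r$ by a smooth arc that stays inside $B_i$ (for instance via the standard bump-function rounding of a corner). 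Performing this at each of the $r-1$ interior junctions produces a smooth path $\tilde{\gamma}$ that agrees with $\gamma$ away from small neighbourhoods of the junctions.

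It then remains to check that $\tilde{\gamma}$ is $\D$-generic with the asserted crossing pattern. Since every modification takes place strictly inside a chamber, where no wall is present, no crossing is created and none is destroyed: $\tilde{\gamma}$ still meets the walls $\D(N_1),\dots,\D(N_r)$ at the unchanged interior times $t_1<\dots<t_r$, the transversality and the collinearity of dimension vectors there are inherited verbatim from Lemma \ref{lemDgeneric}, and for $t\in(t_i,t_{i+1})$ the path lies in $\Ch_{(M_i,P_i)}$, because the two half-segments adjacent to the junction $i/r$ both lie in this chamber and the smoothing keeps them there. The endpoints $\tilde{\gamma}(0)=g^{M_0}-g^{P_0}$ and $\tilde{\gamma}(1)=g^{M_r}-g^{P_r}$ sit in the interiors of $\Ch_{(M_0,P_0)}$ and $\Ch_{(M_r,P_r)}$, which gives condition (1).

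I expect the genuine work to lie in the smoothing step: one must exhibit a smooth path that reproduces, on each segment interior, the transversal single-wall crossing of Lemma \ref{lemDgeneric}, while rounding the corners entirely within the open chambers so that the global count and transversality of crossings are preserved. The openness of chambers --- equivalently, the fact that each junction point lies off the closure of every wall --- is what makes this perturbation harmless, and it is the point I would take care to emphasize.
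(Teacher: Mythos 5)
Your proposal is correct and follows essentially the same route as the paper: apply Lemma \ref{lemDgeneric} to each linear segment of the piecewise linear path through the chamber centers, then smooth the corners inside small balls around the junction points $g^{M_i}-g^{P_i}$, which lie in the open chambers $\Ch_{(M_i,P_i)}$ so that no wall crossings are created or destroyed. Your added emphasis on why the perturbation is harmless is a reasonable elaboration of the same argument, not a different one.
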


\begin{proof}
Consider the path $\gamma:[0,1]\to \mathbb{R}^n$ defined as before by
$$\gamma(t)= (1-rt+i)(g^{M_i}-g^{P_i})-(rt-i)(g^{M_{i+1}}-g^{P_{i+1}}) \text{ if } t\in\left[ \frac{i}{r}, \frac{i+1}{r} \right].$$
 
Of course, the function $\gamma$ restricted to the interval $[i/r, (i+1)/r]$  coincides with the path considered in Lemma \ref{lemDgeneric}. 
Therefore $\gamma$ crosses exactly $r$ walls verifying Definition \ref{defDgeneric}.1 \ref{defDgeneric}.2 and \ref{defDgeneric}.3. 
But $\gamma$ is not smooth and thus is not a $\D$-generic path. 

Note however that  every chamber $\Ch_{(M_i, P_i)}$ is an open set in $\mathbb{R}^n$ with the euclidean topology. 
Therefore, for every $1\leq i\leq r-1$ there is an $\epsilon_i>0$ such that the sphere $B(g^{M_i}-g^{P_i}, \epsilon_i)$ of center $g^{M_i}-g^{P_i}$ and radius $\epsilon_i$ is contained in $\Ch_{(M_i, P_i)}$. 
Choose a smooth path $\tilde{\gamma}:[0,1]\to \mathbb{R}^n$ such that $\gamma(t)=\tilde{\gamma}(t)$ when $\gamma(t)$ does not belong to $B(g^{M_i}-g^{P_i}, \epsilon_i)$ for every $i$. Then $\tilde{\gamma}$ crosses a wall exactly in the same points as $\gamma$. 
Therefore $\tilde{\gamma}$ is a $\D$-generic path crossing exactly $r$ walls. This finishes the proof.
\end{proof}

Given an algebra $A$ we can always construct the following graph associated to the wall and chamber structure of $A$.

\begin{defi}

Let $A$ be an algebra. We define the quiver $\mathfrak{G}_A$ as follows. 

	\begin{itemize}
		\item The vertices of $\mathfrak{G}_A$ correspond to the chambers in the wall and chamber structure of $A$.
		\item There is an arrow from the vertex associated to $\Ch_1$ to the vertex associated to $\Ch_2$ if $\T_{\Ch_2}\subsetneq\T_{\Ch_1}$ and there is no torsion class $\T$ such that $\T_{\Ch_2}\subsetneq\T\subsetneq\T_{\Ch_1}$.
	\end{itemize}
\end{defi}

As a immediate consequence of Theorem \ref{Dgenericpath} we obtain the following.

\begin{prop}
Let $A$ be an algebra. Then the exchange graph of $\tau$-tilting pairs of $A$ is a full subquiver of $\mathfrak{G}_A$. Moreover both quivers are isomorphic if $A$ is $\tau$-tilting finite. 
\end{prop}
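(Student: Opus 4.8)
The plan is to move everything across the identification $\T_{\Ch_{(M,P)}}=\Fac M$ of Proposition~\ref{eqtorsionch}, turning arrows of $\mathfrak{G}_A$ between chambers coming from $\tau$-tilting pairs into covering relations of torsion classes, and then matching those with mutations. First I would settle the vertex statement: by the corollary asserting that $\Ch$ is injective, the assignment $(M,P)\mapsto\Ch_{(M,P)}$ identifies the vertex set of the exchange graph with a subset of the vertices of $\mathfrak{G}_A$. Under this identification an exchange-graph arrow $(M_1,P_1)\to(M_2,P_2)$ (a mutation directed so that $\Fac M_2\subsetneq\Fac M_1$) should match the $\mathfrak{G}_A$-arrow $\Ch_{(M_1,P_1)}\to\Ch_{(M_2,P_2)}$, precisely because $\T_{\Ch_{(M_i,P_i)}}=\Fac M_i$. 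Proving the proposition then splits into two implications: that exchange arrows give $\mathfrak{G}_A$-arrows (this is what makes the image a subquiver), and that every $\mathfrak{G}_A$-arrow between image vertices is an exchange arrow (this is fullness).

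Fullness is the routine direction. If $\mathfrak{G}_A$ has an arrow $\Ch_{(M_1,P_1)}\to\Ch_{(M_2,P_2)}$, then by definition $\Fac M_2\subsetneq\Fac M_1$ with no torsion class strictly in between, so in particular this is a covering relation in the poset of \emph{functorially finite} torsion classes. Since $\Phi$ is a bijection onto the functorially finite torsion classes (Theorem~\ref{TorClass}) whose covering relations are exactly the mutations by \cite{AIR} (Theorem~\ref{mutation} and its surrounding discussion), the pairs $(M_1,P_1)$ and $(M_2,P_2)$ are mutations of one another, yielding the desired exchange-graph arrow.

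The substantive point, which I expect to be the main obstacle, is the converse: a mutation must give a covering relation among \emph{all} torsion classes, i.e. no possibly non functorially finite torsion class $\T$ may satisfy $\Fac M_2\subsetneq\T\subsetneq\Fac M_1$. My plan is to localise at the shared facet. The two pairs complete a common almost $\tau$-tilting pair $(\bar M,\bar P)$, whose two associated torsion classes are the smaller $\Fac M_2=\Fac\bar M$ and the larger $\Fac M_1={}^\perp(\tau\bar M)\cap\bar P^\perp$; since these have the same perpendiculars, the wall category is $\Fac M_1\cap(\Fac M_2)^\perp={}^\perp(\tau\bar M)\cap\bar P^\perp\cap\bar M^\perp$, which by Proposition~\ref{catsemistables} and Theorem~\ref{tautiltred} is equivalent to $\mod C_{(\bar M,\bar P)}$ with $rk(K_0(C_{(\bar M,\bar P)}))=1$. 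I would then invoke $\tau$-tilting reduction in its lattice form, under which the torsion classes of $\mod A$ in the interval $[\Fac M_2,\Fac M_1]$ correspond bijectively to the torsion classes of $\mod C_{(\bar M,\bar P)}$; as the latter algebra is local with a single simple module it has only the two torsion classes $0$ and $\mod C_{(\bar M,\bar P)}$, so the interval consists of its two endpoints and the inclusion is a covering relation. If one prefers to rely only on results in the excerpt, the same conclusion follows by a brick count: an intermediate $\T$ would, by choosing a nonzero module of minimal length in each of the nonempty torsion-free--type categories $\T\cap(\Fac M_2)^\perp$ and $\Fac M_1\cap\T^\perp$, produce two modules that are bricks (minimality forces trivial endomorphisms) lying in the wall category $\Fac M_1\cap(\Fac M_2)^\perp$; these bricks are non-isomorphic because one lies in $\T$ and the other in $\T^\perp$, contradicting that $\mod C_{(\bar M,\bar P)}$, having a unique simple, admits a unique brick.

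Finally, for the \emph{moreover} clause I would use the corollary's surjectivity statement: when $A$ is $\tau$-tilting finite, $\Ch$ is surjective, so every vertex of $\mathfrak{G}_A$ already lies in the image, and moreover every torsion class is functorially finite by \cite[Theorem~1.5]{DIJ}. In that regime the distinction between covering relations among all torsion classes and among functorially finite ones evaporates, so the hard direction above becomes automatic and the arrow sets coincide on the nose, upgrading the full subquiver to an isomorphism of quivers.
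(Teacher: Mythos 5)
Your proof is correct, but it takes a genuinely different route from the paper: the paper gives no written argument at all, presenting the proposition as an ``immediate consequence'' of Theorem \ref{Dgenericpath}, i.e.\ of the geometric fact that a sequence of mutations is realized by a $\D$-generic path passing through the corresponding chambers and crossing one wall between consecutive ones. That establishes adjacency of the chambers, but the arrows of $\mathfrak{G}_A$ are defined by covering relations in the lattice of \emph{all} torsion classes, and you correctly isolate the resulting gap --- that a mutation admits no torsion class (functorially finite or not) strictly between $\Fac M_2$ and $\Fac M_1$ --- as the substantive point and then close it. Your localisation of the interval $[\Fac M_2,\Fac M_1]$ to the wall category ${}^\perp(\tau\bar M)\cap\bar P^\perp\cap\bar M^\perp\simeq\mod C_{(\bar M,\bar P)}$ via Proposition \ref{catsemistables} and Theorem \ref{tautiltred}, together with the fallback brick count (a minimal-length nonzero object of $\T\cap(\Fac M_2)^\perp$, resp.\ of $\Fac M_1\cap\T^\perp$, is a brick of the wall category because the image of any endomorphism stays in that intersection; the two bricks lie on opposite sides of $\T$ and hence are non-isomorphic, whereas an algebra with a unique simple module has a unique brick), is sound and uses only results available in the excerpt. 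What your approach buys is a self-contained proof of the step the paper glosses over; what the paper's approach buys is brevity, at the cost of conflating geometric adjacency with the order-theoretic definition of the arrows. Two minor points: the implication ``cover among functorially finite torsion classes $\Rightarrow$ mutation'' used in your fullness direction is only implicit in the excerpt (Theorem \ref{mutation} and the discussion after it give the converse) and should be cited from \cite{AIR} or \cite{DIJ}; and in the $\tau$-tilting finite case the appeal to all torsion classes being functorially finite is redundant, since surjectivity of $\Ch$ plus the already-established full-subquiver statement finishes the argument.
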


\begin{rmk}
Suppose that we have two vertices of $\mathfrak{G}_A$ induced by $\tau$-tilting pairs $(M_1,P_1)$ and $(M_2, P_2)$ that are connected by an edge. 
Then it is easy to see that this edge corresponds to the cone $\C_{(M,P)}$ generated by the almost $\tau$-tilting pair $(M,P)$ which is a direct summand of both $(M_1,P_1)$ and $(M_2, P_2)$. 
But theorem \ref{stablemodcat} implies the existence of a unique $\alpha(M,P)$-stable module $B_{(M,P)}$ for every $\alpha(M,P)\in \C_{(M,P)}$.
Moreover $B_{(M,P)}$ is a brick by \cite[Theorem 1]{Ru} and independent of the choice of $\alpha(M,P)\in \C_{(M,P)}$ by construction.
Therefore, the previous proposition implies that $\mathfrak{G}_A$ induces a brick labeling of the edges in the exchange graph of $\tau$-tilting pairs of $\mod A$.

This labeling by bricks appeared independently in \cite{Asai,DIRRT,BCZ}.
Moreover, based on  the results developed here, the brick labeling was studied further in \cite{Tc-v}, showing that dimension vectors of these bricks correspond to the $c$-vectors of $\mod A$.
\end{rmk} 

\begin{conj}
The quiver $\mathfrak{G}_A$ is isomorphic to the exchange graph of $\tau$-tilting pairs for every algebra $A$.
\end{conj}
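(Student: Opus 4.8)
The plan is to promote the preceding Proposition, which already realizes the exchange graph of $\tau$-tilting pairs as a full subquiver of $\mathfrak{G}_A$, to a genuine isomorphism. Since that inclusion already respects the arrow structure, the whole problem reduces to showing it is \emph{surjective on vertices}: every chamber $\Ch$ of the wall and chamber structure of $A$ must equal $\Ch_{(M,P)}$ for some $\tau$-tilting pair $(M,P)$. Equivalently, one must prove that the map $\Ch$ of Theorem \ref{introwalls} is surjective for an arbitrary algebra $A$, dropping the $\tau$-tilting finiteness hypothesis under which surjectivity is already known.

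First I would dispatch the arrow compatibility, granting the vertex bijection. The arrows of $\mathfrak{G}_A$ are the covering relations $\T_{\Ch_2}\subsetneq\T_{\Ch_1}$ of the associated torsion classes, and Proposition \ref{eqtorsionch} identifies $\T_{\Ch_{(M,P)}}$ with $\Fac M$. By Theorem \ref{TorClass} the functorially finite torsion classes of the form $\Fac M$ are in bijection with $\tau$-tilting pairs, and a covering relation among them is exactly a mutation in the sense of \cite{AIR}. Lemma \ref{lemDgeneric} then shows that mutually mutated pairs yield neighbouring chambers separated by a single wall, the green or red character of the crossing recording the direction of the inclusion of torsion classes. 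Hence, once the vertices are matched, every arrow of $\mathfrak{G}_A$ between two $\tau$-tilting chambers is a mutation and conversely; this step I expect to be routine.

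The real content is surjectivity on vertices. Given a chamber $\Ch$, choose $\theta\in\Ch$ and form Bridgeland's torsion class $\T_\theta$. The aim is to prove that $\T_\theta$ is functorially finite and independent of the choice of $\theta\in\Ch$, so that $\T_{\Ch}=\T_\theta$. If so, Theorem \ref{TorClass} yields a $\tau$-tilting pair $(M,P)$ with $\Fac M=\T_{\Ch}$, and using Theorem \ref{stablemodcat} and Proposition \ref{chambers}, together with the fact that a chamber contains no nonzero semistable module, one verifies that the full-dimensional cone $\rC_{(M,P)}$ and $\Ch$ carry the same associated torsion class and therefore coincide. A natural route to functorial finiteness is to run a green $\D$-generic path into $\Ch$ in the spirit of Theorem \ref{Dgenericpath} and extract from it a \emph{finite} sequence of mutations terminating in a $\tau$-tilting cone equal to $\Ch$.

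The main obstacle, and the reason the statement remains conjectural, is exactly this functorial finiteness, equivalently the finite reachability of $\Ch$ by mutations. In the $\tau$-tilting infinite case the $g$-vector fan need not be complete, and a priori the complement of the closure of all walls could contain a full-dimensional region that is not of the form $\rC_{(M,P)}$. Ruling this out requires controlling the accumulation of walls: in the Kronecker example the preprojective and preinjective walls accumulate but produce no spurious chamber, and one would need a general mechanism guaranteeing the same behaviour. I expect that any proof must establish that the closure of the union of all walls has empty interior, that is, that $A$ is \emph{$\tau$-tilting tame} in the sense introduced above, and that no purely formal manipulation of the torsion classes $\T_\theta$ will suffice without genuinely new control of the limit points of the wall arrangement.
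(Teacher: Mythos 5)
The statement you were asked to address is stated in the paper as a \emph{conjecture}, and the paper offers no proof of it; what you have written is, by your own admission in the final paragraph, not a proof but a programme whose decisive step is left open. The genuine gap is exactly the one you name: surjectivity on vertices, i.e.\ that every chamber of the wall and chamber structure is of the form $\Ch_{(M,P)}$ for a $\tau$-tilting pair $(M,P)$. The paper only establishes this under the hypothesis that $A$ is $\tau$-tilting finite (via \cite[Theorem 1.5]{DIJ}); in the $\tau$-tilting infinite case the $g$-vector fan need not cover $\R^n$, the complement of the closure of the walls could a priori contain full-dimensional components not arising from any $\tau$-tilting pair, and nothing in the paper or in your text rules this out. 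Your suggested route --- show that $\T_\theta$ is functorially finite for $\theta$ in an arbitrary chamber and invoke Theorem \ref{TorClass} --- is plausible as a strategy but is precisely where all the difficulty lives, and you give no mechanism for controlling the accumulation of walls that would make it work. So the conjecture remains a conjecture.

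Two further points deserve flagging even within your programme. First, the step ``the full-dimensional cone $\rC_{(M,P)}$ and $\Ch$ carry the same associated torsion class and therefore coincide'' is itself unjustified: the assignment $\Ch\mapsto\T_{\Ch}$ is not shown to be injective on arbitrary chambers, so equality of torsion classes does not immediately force equality of chambers; one would instead want to argue geometrically that $\theta$ lies in $\rC_{(M,P)}$, for instance by showing that no wall separates $\theta$ from that cone. Second, in the arrow-matching step you assert that a covering relation between functorially finite torsion classes ``is exactly a mutation''; the definition of an arrow in $\mathfrak{G}_A$ forbids \emph{any} intermediate torsion class, not merely a functorially finite one, so this identification requires the result of \cite{DIJ} that a mutation produces a cover in the full lattice of torsion classes --- a point the paper's own Proposition absorbs but that your sketch glosses over. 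None of this is fatal to the programme, but as written nothing beyond the paper's existing partial results (the full-subquiver statement and the $\tau$-tilting finite case) has been established.
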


\subsection{Maximal green sequences as $\D$-generic paths}

We study in this section maximal green sequences in module categories: 

\begin{defi}
A \textit{maximal green sequence} in  $\rm{mod} A$ is a finite sequence of torsion classes $0=\T_0\subsetneq \T_1 \subsetneq \dots \subsetneq \T_{n-1}\subsetneq \T_r=\rm{mod} A$ such that for all $i\in\{1, 2, \dots, r\}$, the existence of a torsion class $\T$ satisfying $\mathcal{T}_i\subseteq\mathcal{T}\subseteq\mathcal{T}_{i+1}$ implies $\mathcal{T}=\mathcal{T}_i$ or $\mathcal{T}=\mathcal{T}_{i+1}$.
\end{defi}

As a first result, we provide a characterization of maximal green sequences in terms of $\tau$-tilting pairs.

\begin{prop}\label{SVMmod}
Let $A$ be an algebra and $$\{0\}=\T_{0}\subsetneq \T_{1} \subsetneq \cdots \subsetneq \T_{r}=\emph{mod} A$$ be a maximal green sequence in $\emph{mod} A$. Then there exists a set $\{(M_i,P_i)\}_{i=0}^r$ of $\tau$-tilting pairs such that $\emph{Fac} M_i=\T_i$ for all $1\leq i \leq r$.
\end{prop}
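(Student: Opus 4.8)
The plan is to reduce the statement to a functorial finiteness claim and then climb the chain one cover relation at a time using $\tau$-tilting mutation. By Theorem~\ref{TorClass} a torsion class is of the form $\Fac M$ for a $\tau$-tilting pair $(M,P)$ if and only if it is functorially finite; hence it suffices to prove that each $\T_i$ appearing in the maximal green sequence is functorially finite. I would argue this by induction on $i$. The base case is $\T_0=\{0\}=\Fac 0$, realised by the $\tau$-tilting pair $(0,A)$ (the opposite extreme $\T_r=\mod A=\Fac A$ being realised by $(A,0)$).

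For the inductive step, assume $\T_{i-1}=\Fac M_{i-1}$ for a $\tau$-tilting pair $(M_{i-1},P_{i-1})$, and recall that consecutive classes of the green sequence form a cover relation, so that no torsion class lies strictly between $\T_{i-1}$ and $\T_i$. The key claim is that, because $\T_{i-1}$ is functorially finite and $\T_{i-1}\subsetneq\T_i$, there is a left mutation $(M_i,P_i)$ of $(M_{i-1},P_{i-1})$ in the sense of Theorem~\ref{mutation} satisfying $\T_{i-1}\subsetneq\Fac M_i\subseteq\T_i$. Granting this, the cover property forces $\Fac M_i=\T_i$, so $\T_i$ is functorially finite, realised by $(M_i,P_i)$, and the induction goes through; assembling the steps also exhibits $\{(M_i,P_i)\}_{i=0}^r$ as a sequence of mutations, which is the form needed to apply Theorem~\ref{Dgenericpath}.

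The justification of this claim is the main obstacle. Here I would invoke the mutation theory of Adachi--Iyama--Reiten recalled after Theorem~\ref{mutation}: the torsion classes covering a functorially finite torsion class $\T_{i-1}$ are precisely the $\Fac M'$ arising from its finitely many left mutations, and these are again functorially finite. The mechanism behind this cofinality is that any $N\in\T_i\setminus\T_{i-1}$ may be replaced, via the canonical sequence $0\to tN\to N\to N/tN\to 0$ for the torsion pair $(\Fac M_{i-1},M_{i-1}^{\perp})$, by the nonzero torsion-free part $N/tN\in\T_i\cap M_{i-1}^{\perp}$; such an object witnesses that $\T_i$ extends $\T_{i-1}$ beyond the torsion pair, and one must check that this forces one of the finitely many left mutations of $(M_{i-1},P_{i-1})$ to have its torsion class contained in $\T_i$. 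This is precisely the point where the finiteness intrinsic to $\tau$-tilting mutation, rather than any feature of general torsion classes, does the work, and I would either extract it directly from the mutation description in \cite{AIR} or argue it through $\tau$-tilting reduction (Theorem~\ref{tautiltred}) applied to the perpendicular category of $(M_{i-1},P_{i-1})$.
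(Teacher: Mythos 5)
Your proposal is correct and follows essentially the same route as the paper: induction along the chain, base case $(0,A)$, and at each step using the cover relation to force the next torsion class to be the functorially finite one produced by a (left) mutation. The "key claim" you isolate as the main obstacle is exactly what the paper outsources to \cite[Theorem 3.1]{DIJ}, so your sketch and the published proof coincide in substance.
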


\begin{proof}
We construct the $\tau$-tilting pairs $(M_i,P_i)$ such that $\rm{Fac} M_i=\T_i$  inductively,  starting at $\T_0=\{0\}$. 
Of course, setting $(M_0,P_0)= (0,A)$ provides a $\tau$-tilting pair with $\Fac M_0=\{0\}.$ 

Suppose now that we already constructed a $\tau$-tilting pair $(M_i,P_i)$ such that $\T_i=\Fac M_i$. By definition of a maximal green sequence, we know that there is no torsion class strictly between $\T_{i}=\Fac M_{i}$ and $\T_{i+1}$. 
Therefore \cite[Theorem 3.1]{DIJ} implies the existence of a $\tau$-tilting pair $(M_{i+1},P_{i+1})$ such that $\T_{i+i}=\Fac M_{i+1}$. 

Finally this process will eventually stop given that maximal green sequences consist only of finitely many torsion classes. 
This finishes the proof. 
\end{proof}

As a consequence of Theorem \ref{Dgenericpath} and Proposition \ref{SVMmod} we can give a characterization of maximal green sequences in module categories in terms of $\D$-generic paths, which is one of the aims of this work. 
Remember that one can associate a torsion class $\T_{\theta}$ for every  $\theta \in \mathbb{R}^n$, see section \ref{ssc:torsionchambers}.

\begin{theorem}\label{MGSasDpaths}
Let $A$ be an algebra and $$\{0\}=\T_{0}\subsetneq \T_{1} \subsetneq \cdots \subsetneq \T_{r}=\mod A$$ be a maximal green sequence in $\mod A$. 
Then there exist a $\D$-generic path $\gamma:[0,1] \to \mathbb{R}^n$ such that the following conditions hold:
\begin{enumerate}
\item $\gamma$ crosses exactly $r$ walls at $t_1<t_2<\dots<t_r$;
\item every wall crossing of $\gamma$ is green;
\item the torsion class associated to $\T_{\gamma(t)}$ coincides with some torsion class $\T_k$ in the maximal green sequence for every $t\in[0,1]$;
\item $\T_{\gamma(t')}$ is contained in $\T_{\gamma(t'')}$ if $t'<t''$.
\end{enumerate}
\end{theorem}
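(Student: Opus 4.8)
The plan is to combine Proposition \ref{SVMmod} with Theorem \ref{Dgenericpath} and Lemma \ref{lemDgeneric}, using Remark \ref{sametorsion} to identify the torsion class associated to each point of the path. First I would invoke Proposition \ref{SVMmod} to obtain a sequence $\{(M_i,P_i)\}_{i=0}^r$ of $\tau$-tilting pairs with $\Fac M_i=\T_i$. The key observation is that, since $\T_i\subsetneq\T_{i+1}$ are consecutive torsion classes with nothing strictly between them, the pairs $(M_i,P_i)$ and $(M_{i+1},P_{i+1})$ differ by a single mutation; indeed by the discussion after Theorem \ref{mutation}, covering of functorially finite torsion classes corresponds precisely to mutation of $\tau$-tilting pairs. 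Thus the sequence satisfies the hypothesis of Theorem \ref{Dgenericpath}, which produces a $\D$-generic path $\gamma=\tilde\gamma$ crossing exactly $r$ walls at points $t_1<\dots<t_r$ and lying in $\Ch_{(M_i,P_i)}$ on each open interval $(t_i,t_{i+1})$. This immediately gives condition (1).

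Next I would establish (2), greenness of every crossing. Since $\Fac M_i=\T_i\subsetneq\T_{i+1}=\Fac M_{i+1}$, each step is a left mutation in the sense defined after Theorem \ref{mutation}. Lemma \ref{lemDgeneric} states precisely that the crossing between $\Ch_{(M_i,P_i)}$ and $\Ch_{(M_{i+1},P_{i+1})}$ is green if and only if $\Fac M_i\subset\Fac M_{i+1}$, which holds by construction, so every crossing of $\gamma$ is green. The local computation in Lemma \ref{lemDgeneric} applies on each linear piece, and smoothing the path via $\tilde\gamma$ does not affect the sign of $\langle\gamma'(t),[N_i]\rangle$ at the crossing points since $\tilde\gamma$ agrees with $\gamma$ outside the small balls around the chamber centers, in particular near each $t_i$.

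For (3) and (4) I would use Remark \ref{sametorsion}, which asserts that $\T_\theta=\Fac M_i$ for every $\theta\in\Ch_{(M_i,P_i)}$. Hence for $t\in(t_i,t_{i+1})$ we have $\gamma(t)\in\Ch_{(M_i,P_i)}$ and therefore $\T_{\gamma(t)}=\Fac M_i=\T_i$, which is one of the torsion classes in the maximal green sequence; this proves (3) on the open intervals. At the wall crossing points $t_i$ themselves, $\gamma(t_i)\in\D(N_i)$, and I would argue that $\T_{\gamma(t_i)}$ equals either $\T_{i-1}$ or $\T_i$ by a continuity/semicontinuity argument on the torsion class $\T_\theta$ of Bridgeland, using that $\T_\theta$ can only jump across walls and that the crossing is green; in any case it coincides with a torsion class in the sequence. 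Finally, condition (4) follows because the torsion classes attained are exactly $\T_0\subsetneq\T_1\subsetneq\dots\subsetneq\T_r$ in increasing order as $t$ increases, so $t'<t''$ forces $\T_{\gamma(t')}=\T_j\subseteq\T_k=\T_{\gamma(t'')}$.

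The main obstacle I anticipate is the precise behaviour of $\T_{\gamma(t)}$ at the crossing points $t_i$, where $\gamma(t_i)$ lies on the wall $\D(N_i)$ rather than in the interior of a chamber. Away from the walls Remark \ref{sametorsion} settles everything cleanly, but on the wall one must show that $\T_{\gamma(t_i)}$ does not produce some spurious torsion class outside the sequence. I expect this to require understanding how Bridgeland's assignment $\theta\mapsto\T_\theta$ degenerates on a wall, and in particular that the green direction of the crossing forces $\T_{\gamma(t_i)}$ to agree with the torsion class of the chamber being entered (or left). This is the one point where the argument needs care beyond directly citing the earlier lemmas.
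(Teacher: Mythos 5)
Your overall strategy coincides with the paper's: Proposition \ref{SVMmod} produces the $\tau$-tilting pairs, Theorem \ref{Dgenericpath} produces the path and gives (1), Lemma \ref{lemDgeneric} gives (2) since each step is a left mutation, and Proposition \ref{eqtorsionch} (equivalently Remark \ref{sametorsion}) identifies $\T_{\gamma(t)}$ with $\T_i$ on each open interval $(t_i,t_{i+1})$. Up to that point your argument is correct and is essentially the one in the paper.

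The genuine gap is exactly the one you flag yourself: the value of $\T_{\gamma(t_i)}$ at a crossing point. Appealing to ``continuity/semicontinuity'' of Bridgeland's assignment $\theta\mapsto\T_\theta$ is not an argument --- $\T_\theta$ is defined by inequalities that are not continuous in $\theta$, and on a wall the assignment genuinely differs from its values on either side, so you cannot deduce its value there by a limiting procedure. The paper closes this with a concrete sandwich argument that needs no topology. At $t=t_i$ the point $\gamma(t_i)$ lies in the interior of the cone $\C_{(M,P)}$ of the almost $\tau$-tilting pair $(M,P)$ common to $(M_{i-1},P_{i-1})$ and $(M_i,P_i)$, whose associated torsion pair on the smaller side is $(\Fac M, M^\perp)=(\Fac M_{i-1}, M_{i-1}^\perp)$ and on the larger side is $({}^\perp(\tau M)\cap P^\perp,\F_i)=(\Fac M_i,\F_i)$. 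Applying Lemma \ref{lemtrace} to $(M,P)$ at $\theta=\gamma(t_i)$ shows $\Fac M_{i-1}\subsetneq\T_{\gamma(t_i)}$, and the dual of Lemma \ref{lemtrace} shows $\F_i\subseteq\F_{\gamma(t_i)}$, hence $\T_{\gamma(t_i)}\subseteq\Fac M_i$. Since the green sequence admits no torsion class strictly between $\T_{i-1}$ and $\T_i$, these two containments force $\T_{\gamma(t_i)}=\T_i$. This is the step you would need to supply; once it is in place, your derivation of (3) and (4) goes through as stated.
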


\begin{proof}
Proposition \ref{SVMmod} yields a set of $\tau$-tilting pairs $\{(M_0, P_0), (M_1, P_1), \dots, (M_r, P_r)\}$ such that $(M_i, P_i)$ is a left mutation of $(M_{i-1}, P_{i-1})$ and $\T_i=\Fac M_i$. 
The existence of a $\D$-generic path $\gamma$ satisfying condition (1) follows directly from Theorem \ref{Dgenericpath}.
Moreover, since $\T_{i-1}\subset \T_{i}$, Lemma \ref{lemDgeneric} implies that every crossing is green, which shows condition (2). 

Suppose that $t$ is not a point in which $\gamma$ crosses a wall. 
Hence, Theorem \ref{Dgenericpath} shows that $\gamma(t)$ belongs to the chamber $\Ch_{(M_i, P_i)}$ for some $i$. 
Therefore Proposition \ref{eqtorsionch} implies that $\T_{\gamma(t)}=\Fac M_i$. 

Otherwise, if $t=t_i$ is a point in which $\gamma$ crosses a wall we have that $\gamma(t_i)\in \C_{(M,P)}$, where $(M,P)$ is the only almost $\tau$-tilting pair that is a direct summand of $(M_{i-1},P_{i-1})$ and $(M_i, P_i)$. 
Denote by $(\T_{i-1},\F_{i-1})$ and $(\T_i, \F_i)$ the torsion pairs associated to $(M_{i-1}, P_{i-1})$ and $(M_i, P_i)$, respectively. 
Hence Lemma \ref{lemtrace} implies that $\T_{i-1}\subsetneq \T_{\gamma(t)}$. Also, the dual of Lemma \ref{lemtrace} implies that $\F_{i}\subsetneq \F_{\gamma(t)}$, where $\F_{\gamma(t)}$ is the torsion free class associated to $\gamma(t)$. 
From these two equations we deduce that 
$$\Fac M_{i-1} \subsetneq \T_{\gamma(t)} \subset \Fac M_i,$$
which implies that $\T_{\gamma(t)} =\Fac M_i$ by the definition of a maximal green sequence. 
This shows condition (3). 
Finally, (4) is a direct consequence of (3).
\end{proof}

\begin{rmk}
Note that for the green $\D$-generic path constructed in corollary \ref{MGSasDpaths} we have $\gamma(0)=-g^A\in \Ch_{(0,A)}$ and
$\gamma(1)=g^A\in \Ch_{(A,0)}$ with $\T_{\gamma(1)}=\mod A$. Lemma \ref{lemDgeneric} implies that if we extend $\gamma$ in order to cross another wall, then this crossing must be red. Hence, the $\D$-generic paths associated to a maximal green sequence are the ones passing from $-g^A$ to
$g^A$ via a finite number of green crossings and which cannot be extended.
\end{rmk}

We finish the section with theorem \ref{Markoff} which shows an example that not every algebra admits a maximal green sequence.
These algebras are related to the cluster algebra of the one-punctured torus, and  have been object of intense studies in the context of cluster algebras, see for instance \cite[Example 35]{L-F-surfaces&potentials}, \cite{N-C-c&gvect} or \cite[Theorem 5.17]{DIJ}.
Before stating the theorem, we need some preparatory results.

\begin{lem}\label{firstchamber}
Let $A$ be an algebra and consider the two trivial $\tau$-tilting pairs $(A,0)$ and $(0,A)$. 
Then every wall in the boundary of $\Ch_{(A,0)}$ or $\Ch_{(0,A)}$ is defined by a simple $A$-module. 
\end{lem}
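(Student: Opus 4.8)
The plan is to combine the explicit description of the two chambers with the stability inequality defining a wall. By the remark following the definition of $g$-vectors, each $g^{P(i)}$ is the $i$-th standard basis vector $e_i$ of $\R^n$. Hence $\Ch_{(A,0)}=\rC_{(A,0)}$ is the open positive orthant, and its facets are the cones $\Co_{(\bigoplus_{j\neq i}P(j),0)}$ spanned by $\{e_j : j\neq i\}$, each lying in the coordinate hyperplane $e_i^{\perp}$. Symmetrically, the $g$-vectors of $(0,A)$ are $-e_1,\dots,-e_n$, so $\Ch_{(0,A)}$ is the open negative orthant, whose facets $\Co_{(0,\bigoplus_{j\neq i}P(j))}$ are spanned by $\{-e_j : j\neq i\}$ and again lie in $e_i^{\perp}$. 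By Corollary \ref{walls&chambers}, each of these chambers has exactly $n$ walls, one containing each facet; I write $\D(N_i)$ for the wall containing the facet in $e_i^{\perp}$.

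Next I would pin down the dimension vector of $N_i$. Let $\alpha$ be a vector in the interior $\rC_{(\bigoplus_{j\neq i}P(j),0)}$ of the facet lying in $e_i^{\perp}$; then $\alpha_i=0$ while $\alpha_j\neq 0$ for every $j\neq i$. By Proposition \ref{walls} the module $N_i$ is $\alpha$-stable, so in particular $N_i$ is $\theta$-semistable for every $\theta$ in the facet, which forces $\langle\theta,[N_i]\rangle=0$ for all such $\theta$. Since the facet spans the hyperplane $e_i^{\perp}$, the class $[N_i]$ is orthogonal to every $e_j$ with $j\neq i$, hence $[N_i]$ is a scalar multiple of $e_i$. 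As $[N_i]$ is a nonzero dimension vector, $[N_i]=c\,e_i$ with $c>0$; in particular $N_i$ is supported only at the vertex $i$.

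The final step is the decisive one. Because $N_i$ is supported only at vertex $i$, every submodule $L$ of $N_i$ is again supported only at $i$, so $[L]=c'e_i$ for some $c'\ge 0$, and therefore $\langle\alpha,[L]\rangle=\alpha_i c'=0$ by $\alpha_i=0$. But $\alpha$-stability of $N_i$ demands $\langle\alpha,[L]\rangle<0$ for every proper nonzero submodule $L$. Consequently $N_i$ has no proper nonzero submodule, that is, $N_i$ is simple, and being supported at $i$ it equals $S(i)$. The same argument applies verbatim to the facets of $\Ch_{(0,A)}$: the only change is the sign of the coordinates $\alpha_j$ for $j\neq i$, which is irrelevant since the contradiction comes solely from $\alpha_i=0$. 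This shows that every wall bounding $\Ch_{(A,0)}$ or $\Ch_{(0,A)}$ is of the form $\D(S(i))$.

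The point that needs care is the passage from semistability to the \emph{strict} inequality: the argument works only because $N_i$ is genuinely $\alpha$-stable, not merely semistable, which is exactly what Proposition \ref{walls} together with Theorem \ref{stablemodcat} provides for $\alpha$ in the interior of a codimension-one cone. I would stress that this stability argument sidesteps computing the trace of $P(i)$ in $\Fac(\bigoplus_{j\neq i}P(j))$ explicitly; such a direct computation of the cokernel in Proposition \ref{walls} would require extra care in the presence of loops at the vertex $i$, whereas the orthogonality-plus-stability route avoids that difficulty entirely.
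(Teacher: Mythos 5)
Your argument is correct as a deduction from the results you cite, but it runs in the opposite direction from the paper's proof. The paper never touches the module $N_i$ produced by Proposition \ref{walls}: it simply takes the candidate $S(j)$, computes $\langle g^{P(i)},[S(j)]\rangle=\langle e_i,e_j\rangle=0$ for $i\neq j$, and observes that the submodule condition in the definition of stability is vacuous because $S(j)$ is simple; hence $S(j)$ is $\theta$-stable for every $\theta$ in the facet $\Co_{\left(\bigoplus_{i\neq j}P(i),\,0\right)}$, which therefore lies in the wall $\D(S(j))$. That is the whole proof, and it uses the same orthogonality computation you use. What your deductive route buys is slightly more: it identifies the unique $\alpha$-stable module on the facet as $S(i)$, rather than merely exhibiting $S(i)$ as one stable module there (equivalent statements, by the uniqueness in Theorem \ref{stablemodcat}, but yours makes the identification explicit).

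One caveat about your closing remark: your route does not avoid the difficulty with loops at vertex $i$ so much as relocate it into Proposition \ref{walls}. The input you really need is that $N_i$ is $\alpha$-\emph{stable}, not merely $\alpha$-semistable, and that strict assertion is precisely what becomes delicate when there is a loop at $i$: Proposition \ref{catsemistables} only gives semistability of modules in the perpendicular category, and for, say, a quiver with a loop at $1$ and an arrow $1\to 2$, the cokernel of the $\add P(2)$-approximation of $P(1)$ is a two-dimensional module concentrated at vertex $1$ with a proper nonzero submodule pairing to $0$ against every $\alpha$ in the facet --- semistable but not stable. In that situation your final step shows only that the unique stable module on the facet is simple, not that $N_i$ is; the lemma still holds, because $\D(N)\subseteq\D(S(i))=e_i^{\perp}$ for any nonzero $N$ supported at vertex $i$, so the facet is in any case contained in the wall defined by the simple. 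The paper's verify-the-candidate argument is immune to all of this because it never needs to know which module Proposition \ref{walls} produces.
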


\begin{proof}
We show the result only for the $\tau$-tilting pair $(A,0)$, the case $(0,A)$ being analogous. 
For each $1 \le j \le n$, consider the almost $\tau$-tilting pair $\left( \bigoplus_{i\neq j} P(i), 0 \right)$ and the simple module $S(j)=P(j) / \top P(j)$. 
If $i\neq j$, we have that $$\langle g^{P(i)}, [S(j)]  \rangle= \langle e_i, e_j \rangle=0,$$
where $e_i$ and $e_j$ represent the $i$-th and $j$-th element of the canonical basis of $\mathbb{Z}^n$. 
Since $S(j)$ does not have any proper submodule, this implies that $S(j)$ is $\theta$-stable for all $\theta\in\C_{\left( \bigoplus_{i\neq j} P(i), 0 \right)}$ and the statement follows from the results in section \ref{chamber-results}.
\end{proof}

\begin{lem}\label{liftingwalls}
Let $A$ be an algebra and $I$ an ideal containing no non-zero idempotent of $A$. 
If $N$ is an $A/I$-module defining a wall $\D(N)$ in the wall and chamber structure of $A/I$ then $\D(N)$ is also a wall in the wall and chamber structure of $A$.
\end{lem}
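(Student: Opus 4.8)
The plan is to prove that the stability space of $N$ is the \emph{same} subset of $\R^n$ whether $N$ is regarded as an $A/I$-module or, via the surjection $A\to A/I$, as an $A$-module; the codimension-one condition then transfers for free. The first thing I would check is that both wall and chamber structures live in the same ambient space. This is exactly where the hypothesis enters: for a finite-dimensional algebra, an ideal contains no non-zero idempotent if and only if it is contained in the Jacobson radical $\operatorname{rad}A$. Granting this, $I\subseteq\operatorname{rad}A$ annihilates every simple $A$-module, so the simple $A$-modules are precisely the simple $A/I$-modules; hence $\operatorname{rk}K_0(A)=\operatorname{rk}K_0(A/I)=n$ and both structures partition the same $\R^n$.

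With the ambient space settled, I would compare the two notions of semistability using Definition \ref{stable}. Because $N$ is annihilated by $I$, the $A$-action on $N$ factors through $A/I$, so the $A$-submodules and the $A/I$-submodules of $N$ coincide. Moreover the composition factors of $N$ are the same in $\mod A$ and in $\mod (A/I)$, so the dimension vector $[N]\in\Z^n$, as well as $[L]$ for every submodule $L$, is computed identically in both settings. Consequently, for any $\theta\in\R^n$ the conditions $\theta(N)=0$ and $\theta(L)\le 0$ for all submodules $L$ of $N$ are literally the same constraints in both module categories, which gives
$$\D_A(N)=\{\theta\in\R^n:\theta(N)=0,\ \theta(L)\le 0\text{ for all submodules }L\}=\D_{A/I}(N)$$
as subsets of $\R^n$.

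To finish, I would invoke the hypothesis that $\D_{A/I}(N)$ is a wall of $A/I$, i.e.\ has codimension one in $\R^n$. Since $\D_A(N)=\D_{A/I}(N)$ and both sit in the same $\R^n$, the set $\D_A(N)$ also has codimension one, hence is a wall in the wall and chamber structure of $A$, as required.

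The genuinely delicate point is the equivalence used in the first paragraph, on which everything else rests; I expect this to be the main obstacle. The non-trivial direction (no idempotent $\Rightarrow I\subseteq\operatorname{rad}A$) I would obtain by a Fitting-type argument: if some $x\in I$ were not nilpotent, the stabilising chain $x^nA=x^{n+1}A$ produces a non-zero idempotent lying in $I$, contradicting the hypothesis; thus $I$ is nil and therefore contained in $\operatorname{rad}A$. Once this is in place, the remainder is the routine observation that $\theta$-semistability of a module depends only on its submodule lattice and on dimension vectors, both of which are insensitive to the passage between $A/I$ and $A$.
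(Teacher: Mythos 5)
Your proposal is correct and follows essentially the same route as the paper: the submodule lattice and dimension vectors of $N$ are unchanged under the surjection $A\to A/I$, so the stability space is preserved, with the idempotent hypothesis serving only to identify the Grothendieck groups of $A$ and $A/I$. You supply somewhat more detail than the paper (the Fitting-type argument that no non-zero idempotents forces $I\subseteq\operatorname{rad}A$, and the full equality $\D_A(N)=\D_{A/I}(N)$ rather than just one inclusion), but the underlying argument is the same.
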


\begin{proof}
Let $N$ be an $A/I$-module defining a wall $\D_{A/I}(N)$ and let $\theta\in\D_{A/I}(N)$. 
The submodules of $N$ as a $A$-module coincide with the submodules of $N$ as an $A/I$-submodule,
therefore $\theta(L)\leq 0$ for every submodule $L$ of $N$ as an $A$-module. This implies that $\theta\in\D_A(N)$. 
Hence $\D_A(N)$ is a wall in the wall and chamber structure of $A$.
The condition that the ideal $I$ contains no non-zero idempotent of $A$ is only there to guarantee that the Grothendieck groups of $A$ and $A/I$ are isomorphic, allowing us to compute $\theta(L)$ for both cases.
\end{proof}

Our last result is a generalization of \cite[Theorem 2.3.1]{Mmutnoninv}.

\begin{theorem}\label{Markoff}
 Let $A=kQ/I$ be an algebra where  $I$ is an admissible ideal of $kQ$ and the quiver $Q$ has exactly three vertices and admits the quiver
 $$\xymatrix{
  & 2\ar@<0.5 ex>[dl]\ar@<-0.5 ex>[dl] & \\
  1\ar@<0.5 ex>[rr]\ar@<-0.5 ex>[rr] & & 3\ar@<0.5 ex>[ul]\ar@<-0.5 ex>[ul] }$$
 as a subquiver. 
 Then there is no maximal green sequence in $\mod A$. 
 \end{theorem}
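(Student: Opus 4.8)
The plan is to argue by contradiction, converting a hypothetical maximal green sequence into a finite green $\D$-generic path and then obstructing such a path using the three Kronecker pairs of the Markov subquiver. Suppose $\mod A$ admits a maximal green sequence. By Theorem \ref{MGSasDpaths} and the remark following it, there is a $\D$-generic path $\gamma:[0,1]\to\R^3$ with finitely many wall-crossings, all of them green, running from $\gamma(0)=-g^A\in\Ch_{(0,A)}$ to $\gamma(1)=g^A\in\Ch_{(A,0)}$. Since $g^{P(i)}$ is the $i$-th standard basis vector, the two trivial chambers are the open negative and positive orthants; writing $\gamma(t)=(\theta_1(t),\theta_2(t),\theta_3(t))$ we thus have $\theta_i(0)<0$ and $\theta_i(1)>0$ for each $i$.

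First I would record a monotonicity statement for the coordinates. For each $i$ the simple module $S(i)$ satisfies $\D(S(i))=\{\theta:\theta_i=0\}$, which is exactly the kind of boundary wall identified in Lemma \ref{firstchamber}. Hence every zero of $\theta_i$ is a point where $\gamma$ meets $\D(S(i))$; by the transversality condition \ref{defDgeneric}.3 the meeting is a genuine crossing, and since all crossings are green we get $\theta_i'(t)=\langle\gamma'(t),[S(i)]\rangle>0$ there. A function that crosses zero only with positive derivative changes sign at most once, so each $\theta_i$ vanishes at a unique time $\tau_i$, is negative on $[0,\tau_i)$ and positive on $(\tau_i,1]$.

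The core of the argument is to locate the times at which $\gamma$ crosses each plane $L_{st}=\{\theta_s+\theta_t=0\}$. Take the pair $(1,3)$ joined by the (at least double) arrows $1\to 3$. Killing in $A$ every arrow except these produces a quotient $\bar A_{13}=A/I_{13}$, where $I_{13}$ is generated by arrows and so contains no nonzero idempotent; by Lemma \ref{liftingwalls} every wall of $\bar A_{13}$ is a wall of $A$. Now $\bar A_{13}$ is the product of a generalized Kronecker algebra supported on the arrows $1\to 3$ with the field at the remaining vertex $2$, and its representation theory is classical: a nonzero module supported on $\{1,3\}$ is $\theta$-semistable only if $\theta_1\ge 0\ge\theta_3$ (otherwise the sink simple $S(3)$ destabilizes it), while the infinitely many preprojective and preinjective modules yield walls accumulating onto $L_{13}$ inside the region $\{\theta_1>0>\theta_3\}$. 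Consequently a path with finitely many crossings cannot meet $L_{13}$ in $\{\theta_1>0>\theta_3\}$, and it cannot meet it on $\{\theta_1=\theta_3=0\}$ either, since there $\gamma$ would lie in $\D(S(1))\cap\D(S(3))$ with $[S(1)],[S(3)]$ non-proportional, violating \ref{defDgeneric}.2. As $\theta_1+\theta_3$ passes from negative to positive, $\gamma$ must cross $L_{13}$, and every such crossing therefore occurs at a point with $\theta_1<0<\theta_3$, that is, at a time in $(\tau_3,\tau_1)$; in particular $\tau_3<\tau_1$.

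Finally I would invoke the cyclic symmetry of the Markov orientation $1\to 3\to 2\to 1$. Applying the same analysis to the pairs $(3,2)$ and $(2,1)$ forces the crossings of $L_{32}$ and $L_{21}$ to lie in $\{\theta_3<0<\theta_2\}$ and $\{\theta_2<0<\theta_1\}$, giving $\tau_2<\tau_3$ and $\tau_1<\tau_2$. Chaining the three inequalities yields $\tau_3<\tau_1<\tau_2<\tau_3$, a contradiction, so no maximal green sequence exists. The step I expect to be the main obstacle is the accumulation claim for the Kronecker quotient: one must show that the preprojective/preinjective stability walls genuinely accumulate onto the source-positive part of $L_{st}$ densely enough that no finite transversal path can slip through, which rests on the structure theory of (generalized) Kronecker representations together with the transversality built into Definition \ref{defDgeneric}. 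Everything else reduces to bookkeeping with the sign conditions and direct appeals to Lemmas \ref{firstchamber} and \ref{liftingwalls}.
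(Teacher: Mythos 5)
Your proof is correct, and it takes a genuinely different route from the paper's. The paper passes to the single quotient $C=kQ'/\mathrm{rad}^2$ of the Markov quiver, computes the three Kronecker families of walls explicitly, invokes Lemma \ref{firstchamber} to identify the first wall crossed as some $\D(S(i))$, and then argues that the green path, confined from that moment to a half-space, is forced to cross all infinitely many walls of one Kronecker family, contradicting the finiteness in Theorem \ref{MGSasDpaths}. You instead extract from each double arrow $s\to t$ a single ordering constraint $\tau_t<\tau_s$ on the unique sign-change times of the coordinates and close with the cyclic contradiction $\tau_1<\tau_2<\tau_3<\tau_1$; this packaging is more symmetric, needs no "first crossing", and uses the accumulation of Kronecker walls only as a barrier rather than exhibiting an explicit infinite family of crossings. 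The step you flag does go through: at a putative meeting point $p$ of $L_{st}$ with $\theta_s(p)>0>\theta_t(p)$, the regular Kronecker modules (dimension vector proportional to $e_s+e_t$) give a wall through $p$, so Definition \ref{defDgeneric}(3) forces $(\theta_s+\theta_t)'\neq 0$ there; consequently $n(\theta_s+\theta_t)+\theta_s$ changes sign near $p$ for every large $n$, at points where $\theta_s>0$, i.e.\ inside the corresponding preinjective walls, so $\gamma$ would meet infinitely many walls with pairwise non-proportional dimension vectors --- impossible for the path of Theorem \ref{MGSasDpaths} by condition (2) and the finiteness of $r$. Two points of care: when forming the quotient, retain exactly two parallel arrows $s\to t$ (any further parallel arrows must also go into the ideal), since for three or more arrows the walls no longer accumulate onto $\{\theta_s+\theta_t=0\}$; and your placement of all walls of $\{s,t\}$-supported modules in the quadrant $\{\theta_{\mathrm{source}}\ge 0\ge\theta_{\mathrm{sink}}\}$ (sink simple as submodule, source simple as quotient) is the correct one --- the half-space inequalities in the paper's own computation of $\D(M_n)$ sit on the opposite side of what this convention gives (compare with its $\mathbb{A}_2$ figure, where $\D(P(1))$ lies in $\{\theta_1\ge 0\}$), and its final step should accordingly be run with the family supported on the two vertices whose hyperplane the path has just left (its $F_1$) rather than $F_2$; your version is free of this issue.
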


\begin{proof}
We start the proof showing that if $C$ is the path algebra of the quiver 
$$\xymatrix{
  & 2\ar@<0.5 ex>[dl]\ar@<-0.5 ex>[dl] & \\
  1\ar@<0.5 ex>[rr]\ar@<-0.5 ex>[rr] & & 3\ar@<0.5 ex>[ul]\ar@<-0.5 ex>[ul] }$$
 modulo its radical squared, then there is no maximal green sequence in $\mod C$. 
We do that by showing that every green $\D$-generic path $\gamma$ that starts at $(-1, -1, -1)$ and finishes in $(1,1,1)$ crosses infinitely many walls. 

Since there are three pairs of double arrows, we get from the representation theory of the Kronecker quiver the existence of three families of indecomposable modules in $\mod C$ whose dimension vectors are  as follows.
$$F_1=\{M_n\in \mod A : [M_n]=(n+1, 0, n), n \in \mathbb{N}\}$$
$$F_2=\{M'_n\in \mod A : [M_n]=(n, n+1, 0), n \in \mathbb{N}\}$$
$$F_3=\{M''_n\in \mod A : [M_n]=(0, n, n+1), n \in \mathbb{N}\}$$
For $M_n\in F_1$, 
we want to calculate its wall $\D(M_n)$.
The representation theory of the Kronecker quiver yields that the proper submodules of $M_n$ are the modules $M_k\in F_1$ with $k < n$. Therefore we have that 
$$\D(M_n)=\{(x, y, z) \in \mathbb{R}^3 : \langle(x,y,z), (n+1, 0, n)\rangle=0 \text{ and } \langle(x,y,z), (k+1, 0, k)\rangle\leq 0 \text{ for all $k<n$}\}$$
Therefore, $(x, y, z)\in \D(M_n)$ implies that 
\begin{equation}\label{eq1}
z=-\left(\frac{n+1}{n}\right) x
\end{equation}
and 
\begin{equation}\label{eq2}
0 \geq (k+1)x + kz \quad \text{ for all $k<n$.}
\end{equation}
Substituting \ref{eq1} in \ref{eq2} gives 
\begin{eqnarray}
0 \geq& (k+1)x -k\left(\frac{n+1}{n}\right) x \\
     =& \left(\frac{n(k+1) - k(n+1)}{n}\right) x \\
     =& \left(\frac{n-k}{n} \right) x
\end{eqnarray}
which implies that $x\leq 0$ because $k < n$. 
Therefore we have that 
$$\D(M_n)=\{(x, y, z) \in \mathbb{R}^3 : (n+1)x + nz=0 \text{ and } x\leq 0\}.$$
Likewise, one can prove that 
$$\D(M'_n)=\{(x, y, z) \in \mathbb{R}^3 : nx + (n+1)y=0 \text{ and } y\leq 0\},$$
$$\D(M''_n)=\{(x, y, z) \in \mathbb{R}^3 : ny + (n+1)z=0 \text{ and } z\leq 0\}$$

Now, suppose that we have a green $\D$-generic path $\gamma: [0,1] \to \mathbb{R}^3$ starting in $\gamma(0)=(-1,-1,-1)\in\Ch_{(0,C)}$ and ending in $\gamma(1)=(1,1,1)\in\Ch_{(C,0)}$.
We show that $\gamma$ crosses infinitely many walls.
Lemma \ref{firstchamber} implies that the first wall crossed by $\gamma$ is either $\D(S(1))$, $\D(S(2))$ or $\D(S(3))$. 
We consider the case were $\gamma$ starts crossing $\D(S(1))$, the other two cases are analogous. 

Because $\gamma$ crosses $\D(S(1))$ there exists a $t_0\in[0,1]$ such that $\gamma(t_0)=(0, y, z)$ with $y<0$ and $z<0$. 
Hence there exists an $\epsilon\in \mathbb{R}$ such that $\gamma(t_0+\epsilon)=(x, y, z)$ with $|x|<|y|$ and $y<0$, which gives  $nx + (n+1)y < 0$. Therefore, the wall $\D(M'_n)$ lies  between $\gamma(t_0+\epsilon)$ and $\gamma(1)=(1,1,1)$ for every $M'_n\in F_2$. 
Since  $\gamma$ is a green path, 
it must stay in the half-space $\{(x,y,z)\in \mathbb{R}^3 : x > 0\}$, implying that $\gamma$ crosses $\D(M'_n)$ for every $M'_n \in F_2$. 

This shows that every green $\D$-generic path $\gamma$ in the wall and chamber structure of $C$ starting in $\Ch_{(0,C)}$ and finishing in $\Ch_{C,0}$, crosses infinitely many walls. 

Now consider a finite dimensional algebra $A=kQ/I$ as in the statement of the theorem. 
Then lemma \ref{liftingwalls} implies that every $\D$-generic path $\gamma$ in the wall and chamber structure of $A$ crosses at least as many walls as the same path does in the wall and chamber structure of the algebra $C$. Therefore, every green $\D$-generic path $\gamma$ starting in $\Ch_{(0,A)}$ and $\Ch_{(A,0)}$ crosses infinitely many walls.
Henceforth, corollary \ref{MGSasDpaths} implies that there is no maximal green sequence in $\mod A$.
\end{proof}

\bibliography{BibliografiaTreffinger}{}

\bibliographystyle{abbrv}

\end{document}